\theoremstyle{plain}
\newtheorem{thm}{Theorem}[section]
\newtheorem{thmx}{Theorem}
\newtheorem{cor}[thm]{Corollary}
\newtheorem{lem}[thm]{Lemma}
\newtheorem{prop}[thm]{Proposition}
\newtheorem*{principle}{General Principle}
\newtheorem{alt}[thm]{Alternative}
\theoremstyle{definition}
\newtheorem{defn}[thm]{Definition}
\numberwithin{equation}{section}
\def\R1{\widetilde{R}}
\def\T1{\widetilde{T}}
\def\dist{\operatorname{dist}}
\def\supp{\operatorname{supp}}
\def\Lip{\operatorname{Lip}}
\def\eps{\varepsilon}
\def\kap{\varkappa}
\def\R{\mathbb{R}}
\def\dy{\mathcal{D}}
\def\dyup{\mathcal{D}_{\operatorname{up}}}
\def\dyupk{\mathcal{D}^{(k)}_{\;\;\;\;\operatorname{up}}}
\def\gdown{\mathcal{G}_{\operatorname{down}}}
\def\wh{\widehat}
\def\wt{\widetilde}
\def\Span{\operatorname{span}}
\def\lis{\lfloor s\rfloor}
\def\uis{\lceil s\rceil}
\def\Ups{\Upsilon}
\def\wlim{\rightharpoonup}
\def\S{\mathcal{S}}
\def\I{\mathcal{I}}
\def\kap{\varkappa}
\def\Xint#1{\mathchoice
   {\XXint\displaystyle\textstyle{#1}}%
   {\XXint\textstyle\scriptstyle{#1}}%
   {\XXint\scriptstyle\scriptscriptstyle{#1}}%
   {\XXint\scriptscriptstyle\scriptscriptstyle{#1}}%
   \!\int}
\def\XXint#1#2#3{{\setbox0=\hbox{$#1{#2#3}{\int}$}
     \vcenter{\hbox{$#2#3$}}\kern-.5\wd0}}
\def\dashint{\Xint-}
\begin{document}

\title[The boundedness of Square function operators]
{The measures with an associated square function operator bounded in $L^2$}

\author[B. Jaye]{Benjamin Jaye}
\address{Department of Mathematical Sciences, Kent State University, Kent, Ohio 44240, USA}
\email{bjaye@kent.edu}
\author[F. Nazarov]{Fedor Nazarov}
\address{Department of Mathematical Sciences, Kent State University, Kent, Ohio 44240, USA}
\email{nazarov@math.kent.edu}
\author[X. Tolsa]{Xavier Tolsa}
\address{ICREA, Passeig Llu\'\i ­s Companys 23 08010 Barcelona, Catalonia, and Departament de Matem\`atiques and BGSMath, Universitat Autònoma de Barcelona, 08193 Bellaterra (Barcelona), Catalonia.}
\email{xtolsa@mat.uab.cat}


\thanks{B.J. was supported in part by NSF DMS-1500881. F.N. was supported in part by NSF DMS-1600239.  X.T. was supported by the ERC grant 320501 of the European Research Council (FP7/2007-2013), and also partially supported by 2014-SGR-75 (Catalonia), MTM2013-44304-P (Spain), and by the Marie Curie ITN MAnET (FP7-607647).}

\date{\today}
\maketitle
\tableofcontents

\section{Introduction}

 Fix $d\geq 2$ and $s\in (0,d)$.  The aim of this paper is to provide an extension of a theorem of David and Semmes \cite{DS} to general non-atomic measures.  Their theorem provides a geometric characterization of the $s$-dimensional Ahflors-David regular measures\footnote{A measure $\mu$ is Ahflors-David regular if there exists a constant $C>0$ such that $\frac{1}{C}r^s\leq \mu(B(x,r))\leq Cr^s$ for every $x\in \supp(\mu)$ and $r>0$.} for which a certain class of square function operators, or singular integral operators, are bounded in $L^2(\mu)$.\\

 Their description is given in terms of Jones' $\beta$-coefficients, which are defined for $s\in \mathbb{N}$ as
 $$\beta_{\mu}(B(x,r)) = \Bigl(\frac{1}{\mu(B(x,r))}\inf_{L\in \mathcal{P}_s}\int_{B(x,r)}\Bigl(\frac{\dist(y,L)}{r}\Bigl)^2d\mu(y)\Bigl)^{1/2},
 $$
where $B(x,r)$ denotes the open ball centred at $x\in \R^d$ with radius $r>0$, and $\mathcal{P}_s$ denotes the collection of affine $s$-planes in $\R^d$.   Jones introduced these coefficients (with $L^{\infty}(\mu)$ norm replacing the $L^2(\mu)$ mean) in order to give a new proof of the boundedness of the Cauchy Transform on a Lipschitz curve \cite{Jo1} and to characterize the rectifiable curves in $\R^2$ \cite{Jo2}. \\

 Let us now state the David-Semmes theorem in the form most convenient for our purposes.


\begin{thmx}\label{DSthm}\cite{DS}  Suppose that $\mu$ is an $s$-dimensional Ahlfors-David regular measure. The following three statements are equivalent:

(i)  for every odd function $K\in C^{\infty}(\R^d\backslash \{0\})$ satisfying standard decay estimates\footnote{Namely, that for every multi-index $\alpha$,  there is a constant $C_{\alpha}>0$ such that $|D^{\alpha}K(x)|\leq \frac{C_{\alpha}}{|x|^{s+|\alpha|}}$ for every $x\in \R^d\backslash \{0\}$.}, and $\eps>0$, the truncated singular integral operator (SIO)
\begin{equation}\label{DSSIO}
T_{\mu,\eps}(f)(\,\cdot\,) =  \int_{\R^d\backslash B(x,\eps)}K(\, \cdot\,-y)f(y)d\mu(y)
\end{equation}
is bounded on $L^2(\mu)$ with an operator norm that can be estimated independently of $\eps$.

(ii)  for every odd function $\psi\in C^{\infty}_0(\R^d)$, the square function operator
\begin{equation}\label{DSsquare}S_{\mu, \psi}(f)(\,\cdot\,)=\Big[\int_0^{\infty}\Bigl|\frac{1}{t^s}\int_{\R^d}\psi\Bigl(\frac{\,\cdot\,-y}{t}\Bigl)f(y)d\mu(y)\Bigl|^2\frac{dt}{t}\Big]^{\tfrac{1}{2}}
\end{equation}
is bounded in $L^2(\mu)$.

(iii)  $s\in \mathbb{Z}$ and there exists a constant $C>0$ such that
\begin{equation}\label{geosquare}\int_Q\int_0^{\ell(Q)}\beta_{\mu|Q}(B(x,r))^2\frac{dr}{r}d\mu(x)\leq C\mu(Q)
\end{equation}
 for every cube $Q\subset \R^d$, where $\mu|Q$ denotes the restriction of $\mu$ to $Q$.
\end{thmx}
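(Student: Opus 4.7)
The plan is to establish the equivalences as the cycle (iii) $\Rightarrow$ (ii) $\Rightarrow$ (i) $\Rightarrow$ (iii).

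For (iii) $\Rightarrow$ (ii), fix an odd $\psi\in C^\infty_0(\R^d)$ supported in $B(0,R)$. The heart of the matter is the pointwise bound
\[
\Bigl|\frac{1}{t^s}\int\psi\Bigl(\frac{x-y}{t}\Bigr)\,d\mu(y)\Bigr|\lesssim \beta_\mu(B(x,Rt)),\qquad x\in\supp\mu,\ t>0,
\]
which rests on two cancellations. First, for any affine $s$-plane $L$ through $x$, the change of variable $y\mapsto 2x-y$ together with the oddness of $\psi$ gives $\int\psi((x-y)/t)\,d\mathcal{H}^s|_L(y)=0$. Second, the $L^2$-deviation of $\mu$ from a suitable multiple of $\mathcal{H}^s|_L$ on $B(x,Rt)$, where $L$ is the optimizer in the $\beta$-minimization, is controlled by $\beta_\mu(B(x,Rt))$; Cauchy--Schwarz and Ahlfors--David regularity package these into the pointwise estimate. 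Squaring, integrating in $t$ against $dt/t$ and invoking (iii) yields the Carleson bound $\int_Q|S_{\mu,\psi}(1)|^2\,d\mu\lesssim\mu(Q)$ for every cube $Q$. A standard $T(1)$ theorem for square functions in the ADR setting then upgrades this to $L^2(\mu)$-boundedness of $S_{\mu,\psi}$.

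For (ii) $\Rightarrow$ (i), a Calder\'on reproducing formula decomposes any kernel $K$ satisfying the standard estimates as $K(x)=\int_0^\infty t^{-s}\widetilde\psi(x/t)\,\frac{dt}{t}$ for an odd Schwartz function $\widetilde\psi$, which after a standard truncation can be taken in $C^\infty_0(\R^d)$ modulo a controllable tail. A $TT^*$ argument then dominates $\|T_{\mu,\eps}\|_{L^2(\mu)\to L^2(\mu)}$, uniformly in $\eps>0$, by $\|S_{\mu,\widetilde\psi}\|^2_{L^2(\mu)\to L^2(\mu)}$.

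The main obstacle is the direction (i) $\Rightarrow$ (iii). One first argues that $s\in\mathbb{Z}$: applied to the Riesz-type kernels $K(x)=x_i/|x|^{s+1}$, hypothesis (i) together with ADR forces every tangent measure of $\mu$ to be $s$-uniform, whereupon Marstrand's theorem (with the refinements of Preiss and Kirchheim--Preiss) yields that $s$ is an integer. For the geometric square function estimate \eqref{geosquare} one runs the David--Semmes corona decomposition: stopping regions within a Christ--David dyadic system are organized by controlled $\beta$-numbers, and within each region the $L^2(\mu)$-boundedness of suitably chosen odd SIOs, in combination with $T(b)$-type machinery on approximating Lipschitz graphs, provides a Carleson packing of the stopping regions. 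This packing translates directly into \eqref{geosquare} and is the substance of the main results of \cite{DS}.
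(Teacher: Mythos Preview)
Your cycle runs opposite to the paper's. The paper (following \cite{DS}) argues (ii) $\Rightarrow$ (iii) $\Rightarrow$ (i) $\Rightarrow$ (ii): the hard analytic step is (ii) $\Rightarrow$ (iii); then (iii) is shown equivalent to uniform rectifiability, whence David's theorem \cite{Dav} gives (i); and (i) $\Rightarrow$ (ii) is the ``standard artifice'' (randomized kernels, as in Section~\ref{sintsquare}). Your route (iii) $\Rightarrow$ (ii) $\Rightarrow$ (i) has a genuine gap at the very first step.

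The claimed pointwise bound $\bigl|t^{-s}\int\psi((x-y)/t)\,d\mu(y)\bigr|\lesssim\beta_\mu(B(x,Rt))$ is false. The coefficient $\beta_\mu$ measures only how close $\supp(\mu)$ is to an $s$-plane; it says nothing about how close $\mu$ is to a constant multiple of $\mathcal{H}^s$ on that plane (that is the role of the $\alpha$-coefficient, which is genuinely larger). Concretely, take $d=2$, $s=1$, and $\mu=(2+\sin y)\,dy$ on the real axis: this is Ahlfors--David regular with $\beta_\mu\equiv 0$, yet for a generic odd $\psi$ the quantity $t^{-1}\int\psi((x-y)/t)\,d\mu(y)$ does not vanish. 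So (iii) $\Rightarrow$ (ii) cannot be obtained by this pointwise mechanism; in \cite{DS} the implication passes through uniform rectifiability and David's theorem. Your step (ii) $\Rightarrow$ (i) is also problematic: the $TT^*$ sketch presumes almost-orthogonality of the pieces $Q_t$ that the square-function bound alone does not supply, and indeed the present paper remarks explicitly that proving (ii) $\Rightarrow$ (i) without passing through the geometric condition (iv) ``appears to be non-trivial.'' Your outline for (i) $\Rightarrow$ (iii) (Vihtil\"a for $s\in\mathbb{Z}$, then corona/packing) is broadly in the right spirit, but in the actual logical flow this is obtained as (i) $\Rightarrow$ (ii) $\Rightarrow$ (iii).
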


We shall henceforth refer to (i) as the condition that \emph{all SIOs with smooth odd kernels are bounded in $L^2(\mu)$}.


The path that David and Semmes take to prove Theorem \ref{DSthm} is to show that condition (ii) implies (iii), and also that (iii) is equivalent to a number of geometric conditions on the support of $\mu$, such as \emph{uniform rectifiability} (see \cite{DS} for definitions).  One can then apply a theorem of David \cite{Dav} to conclude that (i) holds.   A standard artifice takes us from (i) to (ii) (see Section \ref{sintsquare} below).\\

At this point we should mention that David and Semmes asked whether replacing the condition (i) with just the $L^2(\mu)$ boundedness of the $s$-Riesz transform -- the SIO with kernel $K(x) = \tfrac{x}{|x|^{s+1}}$ -- is already sufficient to conclude that (iii) holds.  The fact that $s\in \mathbb{Z}$ under this assumption was proved by Vihtil\"{a} \cite{Vih}.  Demonstrating that (\ref{geosquare}) holds if $s\in \mathbb{Z}$ has proven more elusive, and is at present only known when $s=1$, by the Mattila-Melnikov-Verdera theorem \cite{MMV}, and $s=d-1$, when it was proved by Nazarov-Tolsa-Volberg \cite{NToV} (in an equivalent form).\\

In this paper, we do not make any progress on the Riesz transform question, but instead give a complete solution to another problem of David and Semmes referred to (rather generously) in Section 21 of \cite{DS} as a ``glaring omission'' in their theorem.  Namely, we provide an analogue of Theorem~\ref{DSthm} for general non-atomic locally finite Borel measures (without any regularity assumptions).  Moreover, we do so for the somewhat smaller class of singular integral kernels considered by Mattila and Preiss \cite{MP}.  When specialized to the case of Ahflors-David regular measures, our arguments yield a new direct proof of the assertion that (ii) implies (iii) in Theorem \ref{DSthm} above.


\subsection{The non-integer condition: The Wolff Energy}  The conditions that should replace (iii) in Theorem \ref{DSthm} when one considers a general measure are by now quite well agreed upon by specialists.  This is particularly true when $s\not\in \mathbb{Z}$, due to the work of Mateu-Prat-Verdera \cite{MPV}.  It turned out that a well-known object in non-linear potential theory, \emph{the Wolff energy}, provides the key.  We define the Wolff energy of a cube $Q\subset \R^d$ by
$$\mathcal{W}(\mu, Q)=\int_{Q}\int_0^{\infty}\Bigl(\frac{\mu(Q\cap B(x,r))}{r^s}\Bigl)^2\frac{dr}{r}d\mu(x).
$$
The \emph{Mateu-Prat-Verdera theorem} states that, if $s\in (0,1)$, then for a non-atomic measure $\mu$, the $s$-Riesz transform of $\mu$ is bounded in $L^2(\mu)$ if and only if the following Wolff energy condition holds:
\begin{equation}\label{Wolffenergy}
\mathcal{W}(\mu, Q)\leq C\mu(Q)\text{ for every cube }Q\subset \R^d.
\end{equation}

In the proof presented in \cite{MPV}, the necessity of the Wolff energy condition for the boundedness of the $s$-Riesz transform relied fundamentally on the restriction to $s\in (0,1)$, as it made use of a variation of the Menger-Melnikov curvature formula.  However, the sufficiency of the condition (\ref{Wolffenergy}) relied on neither the particular structure of the $s$-Riesz kernel $\tfrac{x}{|x|^{s+1}}$, nor the restriction on $s$, and by adapting their technique one can prove the following result.

\begin{thmx}[Mateu-Prat-Verdera]\label{suffwolff}  Fix $s\in (0,d)$.  If $\mu$ is a measure that satisfies (\ref{Wolffenergy}), then all $s$-dimensional SIOs with all smooth odd kernels are bounded in $L^2(\mu)$ (that is, statement (i) of Theorem \ref{DSthm} holds).
\end{thmx}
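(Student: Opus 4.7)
The plan is to adapt the argument of Mateu--Prat--Verdera in a manner that uses only the oddness and smoothness of $K$ rather than any special structure of the $s$-Riesz kernel. The natural framework is the Nazarov--Treil--Volberg $T1$-theorem for non-homogeneous measures, which reduces the $L^2(\mu)$-boundedness of $T_{\mu,\eps}$, uniformly in $\eps > 0$, to verifying the local testing condition
\[
\int_Q \bigl|T_{\mu,\eps}(\chi_Q)(x)\bigr|^2\,d\mu(x) \leq C\mu(Q)
\]
for every cube $Q\subset \R^d$, together with a mild weak boundedness estimate. The adjoint testing condition collapses onto the direct one because oddness of $K$ gives $T_{\mu,\eps}^* = -T_{\mu,\eps}$, while weak boundedness is immediate from the standard derivative bounds on $K$.

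For the testing condition, I would introduce a smooth radial dyadic decomposition $K = \sum_{k\in\mathbb{Z}} K_k$, where each $K_k$ is odd, smooth, supported in an annulus $\{2^k \leq |z| \leq 2^{k+2}\}$, and satisfies the natural $s$-homogeneous derivative bounds inherited from $K$. For $x\in Q$, set
\[
I_k(x) := \int_{\R^d} K_k(x-y)\,\chi_Q(y)\,d\mu(y).
\]
Oddness of $K_k$ gives $\int K_k\,dy = 0$, so $I_k(x)$ vanishes when $\mu|_Q$ is symmetric about $x$ on the corresponding annulus; smoothness of $K_k$ then quantifies this cancellation by how far $\mu|_Q$ deviates from a symmetric distribution there. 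The goal is to upgrade these scale-by-scale cancellations to the pointwise square-function bound
\[
\bigl|T_{\mu,\eps}(\chi_Q)(x)\bigr|^2 \lesssim \int_\eps^{\diam(Q)} \Bigl(\frac{\mu(Q\cap B(x,r))}{r^s}\Bigr)^{\!2}\,\frac{dr}{r},
\]
which upon integration against $d\mu$ on $Q$ becomes the Wolff energy $\mathcal{W}(\mu|Q,Q)$ and is controlled by $C\mu(Q)$ through hypothesis (\ref{Wolffenergy}).

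The main obstacle is the extraction of the $\ell^2$-summable structure across scales. A naive triangle-inequality estimate $|I_k(x)| \lesssim \mu(Q\cap B(x,2^{k+2}))/(2^k)^s$ yields only an $\ell^1$-sum of densities, which is the kind of object dominated by a linear maximal function rather than by the quadratic Wolff energy. Recovering square summation requires using the oddness and smoothness of $K$ beyond the trivial size estimate: one viable route is a Littlewood--Paley / almost-orthogonality argument carried out directly in $L^2(\mu)$, where the operators associated to $K_k$ behave like projections onto near-orthogonal frequency bands. A second route, closer in spirit to \cite{MPV}, is a telescoping representation in which $I_k$ is expressed as an increment of symmetrized density averages over dyadic annuli, so that the resulting quantity -- once squared and summed -- becomes a Carleson-type expression majorized by $\mathcal{W}(\mu|Q, Q)$. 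Once the square bound is established, the remaining steps (integration in $x$, application of (\ref{Wolffenergy}), verification of the weak boundedness property, and invocation of the $T1$-theorem) are routine.
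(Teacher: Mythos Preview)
The paper does not itself prove this theorem: it attributes the result to Mateu--Prat--Verdera and, for the stated level of generality, directs the reader to Appendix~A of \cite{JN2}. So there is no in-paper argument to compare against beyond that citation.

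Your framework is the right one and matches the cited approach: one first observes that the Wolff energy condition forces the growth bound $\mu(B(x,r))\le Cr^{s}$ (test the Wolff inequality on a small cube centred near a point of high density), then invokes the non-homogeneous $T1$ theorem of Nazarov--Treil--Volberg, and the antisymmetry $T_{\mu,\eps}^{*}=-T_{\mu,\eps}$ indeed collapses the adjoint testing onto the direct one. The smooth dyadic decomposition of $K$ is also standard.

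Where your proposal stops short is exactly at the step you flag as ``the main obstacle''. The pointwise inequality
\[
|T_{\mu,\eps}(\chi_Q)(x)|^{2}\;\lesssim\;\int_{\eps}^{\operatorname{diam}(Q)}\Bigl(\frac{\mu(Q\cap B(x,r))}{r^{s}}\Bigr)^{2}\frac{dr}{r}
\]
is not known to hold and is almost certainly false at individual points; oddness of $K_k$ gives cancellation only against \emph{symmetric} measures, and there is no mechanism that converts the trivial $\ell^{1}$ bound $\sum_k D_k$ into an $\ell^{2}$ one pointwise. Your first suggested route, Cotlar--Stein almost-orthogonality for the pieces $T_k$ in $L^{2}(\mu)$, does not go through as stated: one would need decay of $\|T_jT_k\|_{L^{2}(\mu)\to L^{2}(\mu)}$ in $|j-k|$, but the oddness of $K_j$ yields no cancellation when integrated against $d\mu$ (as opposed to $dx$), and without doubling there is no Littlewood--Paley theory to fall back on. Your second route (``telescoping into increments of symmetrised density averages'') gestures in the right direction but is left entirely unspecified.

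What actually closes the gap in \cite{JN2} is not a pointwise $\ell^{2}$ bound at all, but an argument carried out directly at the $L^{2}(\mu)$ level: the testing integral $\int_Q|T_{\mu,\eps}\chi_Q|^{2}\,d\mu$ is expanded and estimated via a Carleson-embedding / corona-type decomposition in which the Carleson constant is precisely the dyadic Wolff sum $\sum_{P}D_{\mu}(P)^{2}\mathcal{I}_{\mu}(P)$ over cubes $P\subset Q$. This is a genuinely different mechanism from either of the two routes you sketch, and it is the missing ingredient in your proposal. As it stands, your write-up is a correct identification of the strategy and its crux, but not yet a proof.
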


To find a proof of this theorem precisely as stated, one can consult Appendix A of \cite{JN2}.  The \emph{Mateu-Prat-Verdera conjecture} asks whether one may extend the necessity of the condition (\ref{Wolffenergy}) for the $L^2(\mu)$ boundedness of the $s$-Riesz transform in $L^2(\mu)$ to the range $s>1$, $s\not\in \mathbb{Z}$.  This was recently proved in the case when $s\in (d-1,d)$ by M.-C. Reguera and the three of us \cite{JNRT}.  It is an open problem for $s\in (1,d-1)\backslash\mathbb{Z}$.

\subsection{The integer condition:  The Jones Energy}  For the case of  integer $s$, we introduce the Jones energy of a cube $Q\subset \R^d$:
\begin{equation}\label{JWenergy}\mathcal{J}(\mu, Q) = \int_Q \int_0^{\infty} \Bigl[\beta_{\mu|_Q}(B(x,r))^2 \Bigl(\frac{\mu(Q\cap B(x,r))}{r^s}\Bigl)^2\Bigl]\frac{dr}{r}d\mu(x).
\end{equation}
Here $\mu|Q$ denotes the restriction of $\mu$ to $Q$.  This square function appears in Azzam-Tolsa \cite{AT}, where amongst other things, the following theorem is proved.

\begin{thmx}\cite{AT} Let $\mu$ be a non-atomic measure on $\mathbb{C}$.  Then the Cauchy transform, the one dimensional SIO with kernel $K(z) = \frac{1}{z}$ in $\mathbb{C}$, is bounded in $L^2(\mu)$, if and only if $\sup_{z\in \mathbb{C}, r>0}\frac{\mu(B(z,r))}{r^s}\leq C$ and
$$\mathcal{J}(\mu,Q)\leq C\mu(Q) \text{ for every cube }Q\subset \mathbb{C}.
$$
\end{thmx}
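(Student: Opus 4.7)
The plan is to prove the two directions separately. The forward direction (Cauchy bounded implies linear growth and Jones energy bound) is relatively tractable, while the reverse direction is where the main obstacle lies.

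For the necessity direction, the linear growth condition $\mu(B(z,r)) \lesssim r$ is standard: testing $T_\mu$ against indicators of small balls and exploiting the non-vanishing of the Cauchy kernel off such balls forces polynomial growth of the measure. To recover the Jones energy estimate, I would invoke the Melnikov--Mattila--Verdera curvature identity, which identifies the truncated $L^2(\mu)$ norm of the Cauchy transform (modulo lower order terms) with the Menger curvature $c^2(\mu)$. Thus Cauchy boundedness, combined with linear growth, gives $c^2(\mu|Q) \lesssim \mu(Q)$ for every cube $Q$. The main geometric step is then to show
$$\mathcal{J}(\mu|Q, Q) \lesssim c^2(\mu|Q) + \mu(Q),$$
which follows from the pointwise inequality relating $\beta_{\mu|Q}(B(x,r))^2 \bigl(\mu(Q\cap B(x,r))/r\bigr)^2$ to a triple integral of $c(y_1,y_2,y_3)^2$ over $B(x,r)$: small Menger curvature for generic triples forces near-collinearity, and the deviation from the best affine line is exactly what the $\beta$-number captures. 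Integrating in $r$ and $x$ then yields the Jones energy estimate.

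For the sufficiency direction, I would apply the non-homogeneous $T(1)$ theorem of Nazarov--Treil--Volberg. Under the linear growth hypothesis this reduces Cauchy boundedness to the testing inequality
$$\int_Q |T_{\mu,\eps}(1_Q)|^2 \, d\mu \leq C\,\mu(Q)$$
for every cube $Q$, uniformly in $\eps$, together with a weak boundedness condition. To verify this bound, I would construct a corona-type decomposition of $\supp(\mu)\cap Q$: descend through a David--Mattila lattice adapted to $\mu$, and stop at descendants where either the normalized density $\mu(B)/r$ deviates substantially from a reference value, or the flatness coefficient $\beta_{\mu|Q}$ becomes large. The Jones energy hypothesis supplies a Carleson packing estimate for the stopping cubes. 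Within each coherent tree of non-stopping cubes, $\supp(\mu)$ should be well approximated by a Lipschitz graph, and one invokes David's classical theorem for the Cauchy transform on such graphs to bound the contribution of each tree. Summing over trees via the Carleson packing completes the estimate.

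The main obstacle is the corona decomposition together with the Lipschitz graph approximation step. The difficulty is that the measure is non-doubling, so the smallness of $\beta$-numbers at many scales does not automatically produce a Lipschitz approximation; one must handle density and flatness simultaneously. This is precisely why the Jones energy couples $\beta$ with $\mu(B)/r^s$ inside the integrand: any useful stopping construction must trigger on both failure modes, and the proof must extract the Carleson packing for both types of stopping cubes from the single energy bound. Verifying the Lipschitz approximation and controlling the error terms introduced at stopping boundaries — in particular, showing that the Cauchy transform errors produced by gluing neighboring trees are absorbable into the Carleson sum — is the heart of the argument.
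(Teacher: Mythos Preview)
The paper does not prove this theorem. It is stated as Theorem~C with the citation \cite{AT} and introduced with the sentence ``This square function appears in Azzam-Tolsa \cite{AT}, where amongst other things, the following theorem is proved.'' There is no proof in the paper to compare your proposal against; the result is quoted as background to motivate the Jones energy condition and to set up Theorem~D (Girela-Sarri\'{o}n), which generalizes the sufficiency direction.

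That said, your outline is a reasonable sketch of how the result is actually proved in the literature. The forward direction via curvature and the inequality $\mathcal{J}(\mu|_Q,Q)\lesssim c^2(\mu|_Q)+\mu(Q)$ is indeed the core of the Azzam--Tolsa argument, and the reverse direction via a corona decomposition into Lipschitz-graph trees, with stopping on density oscillation and large $\beta$, is the strategy carried out in \cite{AT} and extended in \cite{G}. Your identification of the main obstacle --- coupling density and flatness stopping conditions in the non-doubling setting and controlling the gluing errors --- is accurate. But none of this appears in the present paper, which simply imports the statement.
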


This theorem makes essential use of the relationship between the $L^2$-norm of the Cauchy transform of a measure, and the curvature of a measure.  Nevertheless, by combining the techniques of \cite{AT} with those in \cite{Tol2}, Girela-Sarri\'{o}n \cite{G} succeeded in proving the sufficiency of the Jones energy condition for the boundedness of SIOs in greater generality:

\begin{thmx}\label{suffjones}\cite{G} Fix $s\in \mathbb{Z}$, $s\in (0,d)$.  Suppose that there is a constant $C>0$ such that $\sup_{x\in \R^d}\frac{\mu(B(x,r))}{r^s}\leq C$ and
 \begin{equation}\label{JWCarleson}\mathcal{J}(\mu,Q)\leq C\mu(Q) \text{ for every cube }Q\subset \R^d.
\end{equation}
Then all $s$-dimensional SIOs with smooth odd kernels are bounded in $L^2(\mu)$.
\end{thmx}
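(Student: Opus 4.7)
The plan is to verify the hypotheses of a non-homogeneous $T(1)$ theorem of Nazarov-Treil-Volberg. Given the polynomial growth bound $\mu(B(x,r))\leq Cr^s$, the NTV machinery reduces the $L^2(\mu)$-boundedness of a smooth odd SIO $T_{\mu,\eps}$, uniformly in $\eps$, to the local testing condition
\[
\int_Q |T_{\mu,\eps}(\chi_Q)|^2\, d\mu \leq C\mu(Q)
\]
for every cube $Q\subset\R^d$, together with a (here easily verified) weak boundedness property. The task is thus to establish this single-cube estimate using the Jones Carleson hypothesis (\ref{JWCarleson}).

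To do so, I would run a corona-type decomposition of the David-Mattila (or Christ) dyadic lattice restricted to $Q$, organizing the cubes into trees whose top cubes form a Carleson family. The stopping rules generating the tops should be: a significant change (up or down) in the density $\mu(P)/\ell(P)^s$ relative to the current top, and the saturation of a large accumulated mean of $\beta_{\mu|_Q}(B(\cdot, r))^2\bigl(\mu(Q\cap B(\cdot,r))/r^s\bigr)^2\,\tfrac{dr}{r}$. The Carleson packing $\sum_{R\in\mathrm{Top}}\mu(R)\lesssim\mu(Q)$ then follows from the density upper bound combined with the Jones energy hypothesis $\mathcal{J}(\mu,Q)\lesssim\mu(Q)$.

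Within each tree with top $R$, the density is approximately constant by construction, so the stopping rule on the Jones integrand forces the $\beta$-numbers themselves to be small in a quantitative averaged sense inside the tree. This suffices to build an auxiliary AD-regular measure $\sigma_R$ supported on an approximating Lipschitz graph, following the construction of \cite{Tol2} as adapted to the non-AD-regular setting in \cite{AT} and \cite{G}. By Theorem \ref{DSthm}, every smooth odd SIO is bounded on $L^2(\sigma_R)$, and a comparison argument transfers this boundedness to $T_{\mu,\eps}$ acting on the portion of $\mu$ inside the tree; the discrepancies between $\mu$ and $\sigma_R$ produce error terms whose squared $L^2(\mu)$-norm is controlled precisely by the Jones integrand. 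Summing over the trees via the Carleson packing yields the desired single-cube bound.

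The main obstacle is the construction of $\sigma_R$ on each tree with sufficient quantitative control for the comparison argument: one must carry out a Tolsa-type stopping-time analysis in a regime where the density may still vary widely and AD-regularity fails outright, all while ensuring that the accumulated approximation errors across the full family of trees are absorbed by the Jones Carleson condition rather than exceeding it. This delicate bookkeeping, marrying the strategy of \cite{AT} with the variational techniques of \cite{Tol2}, is what constitutes the main work of \cite{G}.
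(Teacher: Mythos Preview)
The paper does not give its own proof of this statement; Theorem~\ref{suffjones} is quoted from \cite{G} and used as a black box. The only commentary the paper offers is that Girela-Sarri\'{o}n proves it ``by combining the techniques of \cite{AT} with those in \cite{Tol2}''. Your outline---NTV $T(1)$ reduction to a local testing condition, a corona decomposition with stopping rules on density oscillation and accumulated Jones integrand, construction of an AD-regular comparison measure on each tree via the methods of \cite{Tol2, AT}, and Carleson packing to sum the trees---is an accurate high-level description of that strategy, and is consistent with the paper's one-line summary. There is nothing further to compare against here.
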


\subsection{Statement of results} Choose a non-negative non-increasing function $\varphi\in C^{\infty}([0,\infty))$, such that $\supp(\varphi)\subset [0,2)$ and $\varphi\equiv 1$ on $[0, 1)$.  We form the square function operator
$$\mathcal{S}_{\mu}(f)(x) = \Bigl(\int_0^{\infty}\Bigl|\int_{\R^d}\frac{x-y}{t^{s+1}}\varphi\Bigl(\frac{|x-y|}{t}\Bigl)f(y)d\mu(y)\Bigl|^2\frac{dt}{t}\Bigl)^{1/2}.
$$


We shall prove the following two results:

\begin{thm}\label{thm2}  Fix $s\not\in \mathbb{Z}$.  Let $\mu$ be a non-atomic locally finite Borel measure.   If the square function operator $\S_{\mu}$ is bounded in $L^2(\mu)$, then there is a constant $C>0$ such that
\begin{equation}\label{wolffenergy}\mathcal{W}(\mu, Q)\leq C\mu(Q)
\end{equation}
for every cube $Q\subset \R^d$.
\end{thm}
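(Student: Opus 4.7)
The plan is to test the boundedness hypothesis on $\chi_Q$ and convert it into a trilinear comparison with the Wolff energy. Starting from $\|\mathcal{S}_\mu\chi_Q\|_{L^2(\mu)}^2 \leq C\mu(Q)$, one expands the square by Fubini; the inner product $\phi_t(x-y)\cdot\phi_t(x-z)$, where $\phi_t(u) = u\varphi(|u|/t)/t^{s+1}$, integrated against $\tfrac{dt}{t}$ is computed explicitly by substituting $t = \max(|x-y|,|x-z|)\,v$: it equals a bounded positive multiple of $\max(|x-y|,|x-z|)^{-2s-2}$, depending only on $\lambda = \min(|x-y|,|x-z|)/\max(|x-y|,|x-z|) \in [0,1]$. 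The hypothesis becomes
\[
\int_{\R^d}\int_Q\int_Q \frac{(x-y)\cdot(x-z)\,\Psi(\lambda)}{\max(|x-y|,|x-z|)^{2s+2}}\,d\mu(x)\,d\mu(y)\,d\mu(z) \leq C\mu(Q),
\]
where $\Psi$ is bounded, positive, and smooth. An analogous manipulation, $\int_R^\infty r^{-2s-1}\,dr = (2s)^{-1}R^{-2s}$, recasts the Wolff energy as
\[
\mathcal{W}(\mu,Q) = \frac{1}{2s}\int_Q\int_Q\int_Q \frac{d\mu(x)\,d\mu(y)\,d\mu(z)}{\max(|x-y|,|x-z|)^{2s}},
\]
so the conclusion reduces to bounding this \emph{positive} trilinear form by the \emph{sign-changing} one plus $C\mu(Q)$. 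Taking $|x-y|\geq|x-z|$ without loss of generality, the ratio of the two integrands is $\lambda\cos\angle yxz$ up to a bounded positive factor, which is precisely the sign cancellation the proof must control.

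The main step, where $s\notin\mathbb{Z}$ is decisive, is to show that this cancellation cannot overwhelm the Wolff form. My approach would be a stopping-time decomposition of $Q$ into subcubes on which $\mu$ has an approximately constant $s$-dimensional density at a dyadic range of scales. On any such piece, if the square-function trilinear form were small, then the first moments $\vec{M}(x,r) = \int_{B(x,r)\cap Q}(x-y)\,d\mu(y)$ would be uniformly small at most selected scales, which would force $\mu$ to be locally well approximated by an $s$-plane measure. For integer $s$ this is compatible with an actual $s$-plane and yields the Jones $\beta$-correction of Theorem~\ref{suffjones}; for $s\notin\mathbb{Z}$, however, no non-trivial measure is $s$-flat, and the resulting rigidity (made quantitative via a tangent-measure blow-up argument) should translate into enough sign-agreement in $(x-y)\cdot(x-z)$ to absorb the Wolff trilinear form.

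The main obstacle is this last step: making the heuristic \emph{no non-trivial $s$-flat measure exists for $s\not\in\mathbb{Z}$} quantitative. The reductions down to the trilinear comparison are essentially calculation; the genuine heart of the proof is organising the geometric rigidity against a careful accounting of the error terms (from the transition region of $\varphi$, from the stopping-time selection, and from the $\R^d$ versus $Q$ integration asymmetry on the square-function side) so that the non-flatness argument closes up.
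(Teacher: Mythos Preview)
Your trilinear reduction is valid and gives a different entry point than the paper's: the paper never writes the square function as a trilinear form but instead localizes it into dyadic ``square function constituents'' $\mathcal{S}^A_\mu(Q)$ and compares $\sum_Q D_\mu(Q)^2\mathcal{I}_\mu(Q)$ to $\sum_Q \mathcal{S}^A_\mu(Q)$ cube by cube via a general principle (if $\mathcal{S}^A_\mu(Q)\ge\Delta\,\Gamma_\mu(Q)\mathcal{I}_\mu(Q)$ for every $\mu$ and $Q$, then $\sum\Gamma_\mu(Q)\mathcal{I}_\mu(Q)\le\Delta^{-1}\sum\mathcal{S}^A_\mu(Q)$). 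Both reductions are essentially bookkeeping; the real content lies where you correctly locate it, in the rigidity step.

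There is, however, a genuine gap in your plan for that step. Small first moments $\vec M(x,r)$ over a range of scales do \emph{not} force approximation by an $s$-plane measure; they force approximate $\varphi$-symmetry (that is, $\int (x-y)\varphi(|x-y|/t)\,d\mu(y)\approx 0$ for $x\in\supp\mu$), which is a much weaker condition --- a radially symmetric measure, for instance, is $\varphi$-symmetric without being remotely flat. The passage from $\varphi$-symmetry to support in a plane of integer dimension $\le\lfloor s\rfloor$ is the substance of the Mattila--Preiss structure theory developed in Section~4 of the paper, and it crucially requires knowing that the measure has at most $(s+\varepsilon)$-power growth at \emph{all} larger scales, not just on a bounded range. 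Your stopping-time pieces have approximately constant density only on a finite window of scales, which is not enough: when you blow up, the limit measure must have controlled growth at infinity, and that comes from density control on cubes arbitrarily larger than the one you started from. The paper obtains this via a ``domination from above'' filter that reduces to cubes $Q$ with $D_\mu(Q')\le 2^{\varepsilon[Q':Q]}D_\mu(Q)$ for \emph{every} larger $Q'$; only then does the blow-up yield a $\varphi$-symmetric measure with $(s+\varepsilon)$-growth, which the structure theorem pins to an $\lfloor s\rfloor$-plane. A second, independent mechanism (``domination from below'', together with a separate blow-up argument for cubes with lower-dimensional density control) is then needed to dispose of the residual case where the limit lives on something of dimension strictly below $s$. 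Your proposal has no analogue of either filter, and the final ``sign-agreement'' step --- translating rigidity of the tangent measure back into a pointwise inequality between the two trilinear integrands --- is left entirely unspecified; it is not clear this can be made to close without essentially rebuilding the paper's two-tier compactness machinery.
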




\begin{thm}\label{thm1}  Fix $s\in \mathbb{Z}$.  Let $\mu$ be a non-atomic locally finite Borel measure.   If the square function operator $\S_{\mu}$ is bounded in $L^2(\mu)$, then there is a constant $C>0$ such that $\frac{\mu(B(x,r))}{r^s}\leq C$ for every $x\in \R^d,\, r>0$, and
\begin{equation}\label{jonesenergy}\mathcal{J}(\mu, Q)\leq C\mu(Q)
\end{equation}
for every cube $Q\subset \R^d$.
\end{thm}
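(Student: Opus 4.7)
The plan has three stages: polynomial growth, conversion of the $L^2(\mu)$ bound into a Carleson-type bound on the vector-valued quantity $\Theta_\mu(x, t) := \int \frac{x-y}{t^{s+1}}\varphi(|x-y|/t)\, d\mu(y)$, and passage from this Carleson bound to the Jones energy estimate via a corona decomposition. For the growth bound, fix $B_0 = B(x_0, r)$ and test $\mathcal{S}_\mu$ on $f = \chi_{B_0}$. For $y$ with $|y - x_0| \sim r$ (outside $B_0$) and $t \sim r$, the vectors $y - z$ with $z \in B_0$ all lie in a narrow cone about $y - x_0$, so no cancellation occurs in the integrand and $|\Theta_{\mu, \chi_{B_0}}(y, t)| \gtrsim \mu(B_0)/r^s$. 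Combining this with $\|\mathcal{S}_\mu \chi_{B_0}\|_{L^2(\mu)}^2 \leq C\mu(B_0)$ and a standard iteration/Vitali argument (applied to a ball realizing the maximal density) yields $\mu(B(x, r))/r^s \leq C$.

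\textbf{Carleson bound on $\Theta_\mu$.} Applying $\mathcal{S}_\mu$ to $\chi_{2Q}$ and noting that $\Theta_{\mu, \chi_{2Q}}(x, t) = \Theta_\mu(x, t)$ whenever $x \in Q$ and $t < \ell(Q)/4$, we deduce the Carleson-type estimate
\begin{equation*}
\int_Q \int_0^{\ell(Q)/4} |\Theta_\mu(x, t)|^2 \frac{dt}{t}\, d\mu(x) \leq C \mu(2Q).
\end{equation*}
Using the polynomial growth and (if necessary) decomposing $Q$ into subcubes, one can further replace $\mu(2Q)$ by $C\mu(Q)$ on the right-hand side.

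\textbf{From the Carleson bound to the Jones energy.} This is the heart of the matter. By oddness of the kernel $\frac{x-y}{t^{s+1}}\varphi(|x-y|/t)$ and radial symmetry, $\Theta_\mu(x, t) = 0$ whenever $\mu$ is a multiple of $\mathcal{H}^s|_L$ for an $s$-plane $L \ni x$, so $|\Theta_\mu|^2$ morally measures the deviation of $\mu$ from flat configurations through $x$. Generically one would expect the pointwise comparison $\beta_{\mu|_Q}(B(x,r))^2 \bigl(\mu(B(x,r))/r^s\bigr)^2 \lesssim |\Theta_\mu(x, r)|^2$, but at points of local symmetry in $\supp \mu$ the right-hand side can vanish while the left does not, so such a pointwise one-scale bound cannot hold in full generality. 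To navigate this obstruction, we employ a corona-type decomposition: inside $Q$ we construct a family $\mathcal{F}$ of stopping subcubes (based on density instability or largeness of local $\beta$'s) designed to satisfy a Carleson packing $\sum_{R \in \mathcal{F}} \mu(R) \leq C\mu(Q)$. Between stoppings, the ``tree'' regions carry a measure that is approximately flat with stable density, and on each tree $T$ we compare $\mu$ to a flat reference $\nu_T = c_T\, \mathcal{H}^s|_{L_T}$. An almost-orthogonality argument across scales then controls the tree's cumulative $\beta^2$-energy by its cumulative $|\Theta_\mu|^2$-energy (drawn from the Carleson bound) plus an error absorbed in the packing of $\mathcal{F}$.

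\textbf{Main obstacle.} The crux of the proof is the interior-of-tree analysis: quantifying how the $\beta$'s at all scales within a tree can be detected from $\Theta_\mu$ data despite potential cancellations at non-generic points of $\supp \mu$. This requires careful multi-scale comparison with the flat reference $\nu_T$, a subtle almost-orthogonality argument summing contributions across scales, and ensuring that all errors accrued in the comparisons fall either into the $\Theta_\mu$-Carleson budget of the previous stage or into the Carleson packing of the stopping cubes $\mathcal{F}$. Establishing this tree-level inequality cleanly, and in sufficient generality to cover arbitrary non-atomic $\mu$, is expected to be the main technical bulk of the proof.
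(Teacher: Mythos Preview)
Your growth argument and the reduction to a Carleson-type bound on $\Theta_\mu$ are fine and match the paper's first moves. The divergence is in the third stage, and there your proposal is a plan rather than a proof: you describe a corona decomposition with trees, flat reference measures $\nu_T$, and an ``almost-orthogonality argument across scales,'' but you do not actually carry out the tree-level inequality, and you concede as much in your final paragraph. That inequality is precisely the content of the theorem; everything before it is routine.

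The paper does \emph{not} proceed by corona decomposition. Instead it uses a compactness/blow-up strategy built on the structure of \emph{$\varphi$-symmetric measures} (measures for which $\Theta_\mu(x,t)=0$ for all $x\in\supp\mu$ and $t>0$). The argument runs as follows. One introduces two combinatorial filters on dyadic cubes, ``domination from above'' (by $\beta_\mu D_\mu$) and ``domination from below'' (by density), and shows that the full Jones-energy sum is controlled by the sum over cubes surviving both filters. For surviving cubes one argues by contradiction: rescale so the cube becomes the unit cube, normalize, and extract a weak limit. The vanishing of the square function constituents forces the limit to be $\varphi$-symmetric; a structure theorem (following Mattila and Mattila--Preiss) then says such a measure is either a flat measure $c\,\mathcal{H}^s|_L$, or has support that is $(s-1)$-rectifiable, or has faster-than-$s$-dimensional growth at infinity. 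Each alternative is shown to contradict the filter conditions inherited from the original cube. The most delicate case --- the limit close to flat (small $\alpha$-coefficient) --- is handled by a separate ``cylinder blow-up'': one stretches the measure in the directions orthogonal to the approximating plane by $1/\beta$, passes to a second limit, and proves that limit is supported on the graph of a \emph{harmonic} function over the plane. Harmonic-function estimates then give a strictly better approximating plane at the original scale, contradicting the up-domination filter.

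Whether your corona route can be pushed through for general non-atomic $\mu$ is not clear from what you have written. The obstruction you yourself flag --- that $\Theta_\mu$ can vanish at symmetric configurations while $\beta$ does not --- is exactly why the paper resorts to compactness: rather than bounding $\beta^2 D^2$ by $|\Theta_\mu|^2$ scale-by-scale or tree-by-tree, it shows that persistent smallness of $|\Theta_\mu|^2$ forces the measure into a rigid class whose members are incompatible with the filter conditions. If you want to salvage the direct approach, you would need a concrete mechanism replacing this rigidity, and none is supplied.
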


\subsection{Singular integrals and square functions}\label{sintsquare} When combined with the theorems of Mateu-Prat-Verdera \cite{MMV} and Girela-Sarri\'{o}n \cite{G} (Theorems \ref{suffwolff} and \ref{suffjones} above), our theorems yield the following result.

\begin{thm}\label{equivalences}  Suppose that $\mu$ is a non-atomic locally finite Borel measure.  The following statements are equivalent.
\begin{enumerate}[(i)]
\item All SIOs with smooth odd kernels are bounded in $L^2(\mu)$.
\item All SIOs of Mattila-Preiss type are bounded in $L^2(\mu)$.  These are the SIOs with kernels that have the form $K(x) = \frac{x}{|x|^{s+1}}\psi(|x|)$  for $\psi\in C^{\infty}([0,\infty))$ satisfying $$|\psi^{(k)}(t)|\leq C_k|t|^{-k} \text{ for every }t\in [0,\infty) \text{ and every }k\geq 0.$$
\item  The square function operator $\S_{\mu}$ is bounded in $L^2(\mu)$.
\item Either
\begin{itemize}
\item  $s\not\in \mathbb{Z}$ and the Wolff energy condition (\ref{wolffenergy}) holds,
\end{itemize}
or
\begin{itemize}
\item $s\in \mathbb{Z}$ and the Jones energy condition (\ref{jonesenergy}) holds.
\end{itemize}
\end{enumerate}
\end{thm}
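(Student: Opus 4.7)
The plan is to close the cycle (iii)$\Rightarrow$(iv)$\Rightarrow$(i)$\Rightarrow$(ii)$\Rightarrow$(iii), assembling the two main theorems of the paper, the two previously known sufficiency theorems, and a standard randomization argument deferred to Section~\ref{sintsquare}.

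The implication (iii)$\Rightarrow$(iv) is precisely the content of the main results: Theorem~\ref{thm2} when $s\notin\mathbb{Z}$ and Theorem~\ref{thm1} when $s\in\mathbb{Z}$. In the integer case, Theorem~\ref{thm1} delivers the linear growth bound $\mu(B(x,r))\le Cr^s$ alongside the Jones energy estimate; this growth fact must be built into the reading of (iv), since the Jones energy alone does not imply it (a smooth density on an affine $s$-plane is a counterexample). The reverse (iv)$\Rightarrow$(i) is then a direct application of Theorem~\ref{suffwolff} (Mateu--Prat--Verdera) for $s\notin\mathbb{Z}$ and Theorem~\ref{suffjones} (Girela-Sarri\'on) for $s\in\mathbb{Z}$, the latter combining the Jones energy bound with the linear growth already at our disposal. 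The inclusion (i)$\Rightarrow$(ii) is immediate: every Mattila--Preiss kernel $K(x)=\tfrac{x}{|x|^{s+1}}\psi(|x|)$ is odd, lies in $C^\infty(\R^d\setminus\{0\})$, and the decay hypotheses on $\psi$ imply $|D^\alpha K(x)|\le C_\alpha|x|^{-s-|\alpha|}$, placing $K$ within the class considered in~(i).

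The substantive step is (ii)$\Rightarrow$(iii), the standard artifice passing from singular integrals to square functions. The plan is to view $\S_\mu$ as an $L^2((0,\infty),dt/t)$-valued singular integral and to realize $\|\S_\mu f\|_{L^2(\mu)}^2$ as an average of squared $L^2(\mu)$-norms of scalar SIOs applied to $f$. Concretely, one attaches independent Rademacher signs $\eps_j(\omega)$ to the dyadic time intervals $[2^j,2^{j+1})$ and forms the scalar operator
$$T^\omega f(x)\;=\;\sum_{j\in\mathbb{Z}}\eps_j(\omega)\int_{2^j}^{2^{j+1}}\int_{\R^d}\frac{x-y}{t^{s+1}}\varphi\Bigl(\frac{|x-y|}{t}\Bigr)f(y)\,d\mu(y)\,\frac{dt}{t}.$$
Khintchine's inequality yields $\|\S_\mu f\|_{L^2(\mu)}^2\asymp \mathbb{E}_\omega\|T^\omega f\|_{L^2(\mu)}^2$, and once one verifies that $T^\omega$ is, uniformly in $\omega$, a scalar SIO of Mattila--Preiss type, hypothesis (ii) supplies the required bound.

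The principal obstacle is precisely this uniform regularity of the randomized kernel $K^\omega$, especially for non-integer $s$: each dyadic piece of $K^\omega$ has radial profile behaving like $r^{s+1}$ near $r=0$, which is only $C^{\lfloor s+1\rfloor}$ at the origin. Resolving this technicality---via a more delicate decomposition of $\varphi$, or by observing that $K^\omega$ still belongs to the broader kernel class of~(i) and first establishing (ii)$\Rightarrow$(i)$\Rightarrow$(iii)---is the crux of Section~\ref{sintsquare}.
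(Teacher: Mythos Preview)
Your overall cycle (iii)$\Rightarrow$(iv)$\Rightarrow$(i)$\Rightarrow$(ii)$\Rightarrow$(iii) and the ingredients you invoke for the first three arrows match the paper exactly, and your remark that the growth bound must be read into (iv) in the integer case is apt.

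Where you diverge from the paper is in the randomization for (ii)$\Rightarrow$(iii), and there your proposed resolution contains an error. The paper does \emph{not} integrate over dyadic shells and then randomize; instead it fixes a continuous parameter $t\in[1,2)$ and forms, for each $t$, $\omega$, and truncation level $k_0$, the scalar kernel
\[
K_{t,k_0,\omega}(x)=\sum_{|k|\le k_0}\eps_k(\omega)\,\frac{x}{(2^kt)^{s+1}}\,\varphi\Bigl(\frac{|x|}{2^kt}\Bigr).
\]
Writing this as $\tfrac{x}{|x|^{s+1}}\psi(|x|)$, the radial profiles $r\mapsto (r/(2^kt))^{s+1}\varphi(r/(2^kt))$ have supports that overlap only boundedly many at a time, so the symbol bounds $|r^{j}\psi^{(j)}(r)|\le C_j$ hold uniformly in $\omega$, $t$, and $k_0$ by inspection. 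One then takes $\mathbb{E}_\omega$ and integrates in $t$ over $[1,2)$ to recover $\|\mathcal{S}_\mu f\|_{L^2(\mu)}^2$. Your integrated-shell version leads to the same kind of kernel and the same symbol bounds; the $r^{s+1}$ behaviour near zero that worries you is harmless for the Mattila--Preiss estimates, since those only demand $r^{j}|\psi^{(j)}(r)|\le C_j$, which $r^{s+1}$ satisfies on bounded sets. (The requirement $\psi\in C^\infty([0,\infty))$ in the statement should be read as smoothness on $(0,\infty)$ together with these weighted bounds; that is what the argument actually uses.)

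What does \emph{not} work is your option of ``first establishing (ii)$\Rightarrow$(i)$\Rightarrow$(iii)''. The paper explicitly remarks that proving (ii)$\Rightarrow$(i) without passing through (iv) appears to be non-trivial, and no direct argument is known. So you cannot appeal to the broader class (i) to sidestep the regularity check; you must verify directly that the randomized kernel satisfies the Mattila--Preiss symbol bounds, which, as above, it does.
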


 That (iii) implies (iv), is merely a restatement of Theorems \ref{thm2} and \ref{thm1}, while Theorems \ref{suffwolff} and \ref{suffjones} imply that (iv) implies (i).  That (i) implies (ii) is trivial as every SIO of Mattila-Preiss type is a SIO with smooth odd kernel.  Thus we only need to show that (ii) implies (iii).  This is a standard argument, already present in \cite{DS, MP}.  To sketch the idea, let us fix a sequence $\eps_k$ of independent mean zero $\pm 1$-valued random variables (on some probability space $\Omega$).  For $\omega\in \Omega$, $t\in [1,2)$, and $k_0\in \mathbb{N}$, consider the following SIO of Mattila-Preiss type
$$T_{t,k_0,\omega}(f)(x) = \int_{\R^d}\Bigl[\sum_{k\in \mathbb{Z}, |k|\leq k_0}\eps_k(\omega)\frac{x-y}{(2^kt)^{(s+1)}}\varphi\Bigl(\frac{|x-y|}{2^kt}\Bigl)\Bigl]f(y)d\mu(y).
$$
Following Section 3 of \cite{DS}, one obtains that $$ \|S_{\mu}(f)\|_{L^2(\mu)}^2\leq C \sup_{k_0\in \mathbb{N}}\int_{1}^2\mathbb{E}_{\omega}\|T_{t, k_0,\omega}(f)\|^2_{L^2(\mu)}\frac{dt}{t}\leq C\|f\|_{L^2(\mu}^2,$$
since all SIOs of Mattila-Preiss type are bounded in $L^2(\mu)$.

Proving that (iii) implies (ii) or (ii) implies (i) without going through (iv) appears to be non-trivial. (At least we do not know how to do that.)

\subsection{The particular choice of the bump function $\varphi$ doesn't matter too much.}  It is natural to wonder the extent to which the mapping properties of $\S_{\mu}$ depend on the particular choice of the bump function $\varphi$.  Here we make three remarks in this regard, with the particular aim of convincing the reader that Theorems \ref{thm2} and \ref{thm1} remain valid if one instead defines the square function operator in a more customary way with a (perhaps only bounded measurable) bump function that is supported away from $0$.

(1)  Suppose that $\psi\in C^{\infty}([0,\infty))$ is a non-negative function that has bounded support and is identically equal to $1$ near $0$.  Then the proofs of Theorems \ref{thm2} and \ref{thm1} can be adapted so that the same conclusions are reached with the $L^2(\mu)$ boundedness of $\S_{\mu}$ replaced by that of the operator
$$\mathcal{S}_{\mu,\psi}(f)(x) = \Bigl(\int_0^{\infty}\Bigl|\int_{\R^d}\frac{x-y}{t^{s+1}}\psi\Bigl(\frac{|x-y|}{t}\Bigl)f(y)d\mu(y)\Bigl|^2\frac{dt}{t}\Bigl)^{1/2}.
$$

(2) For non-negative functions $\psi$ and $g$, define the multiplicative convolution $$\psi_g(t) = \int_{0}^{\infty}\psi\Bigl(\frac{t}{u}\Bigl)g(u)\frac{du}{u}.$$   From a change of variable and Minkowski's inequality we infer that
$$\|\S_{\mu, \psi_g}(f)\|_{L^2(\mu)}\leq \Bigl[\int_{0}^{\infty} u^{s+1} g(u) \frac{du}{u}\Bigl] \|\S_{\mu,\psi}(f)\|_{L^2(\mu)},
$$
and as such, if $\S_{\mu, \psi}$ is bounded in $L^2(\mu)$, and $\int_{0}^{\infty} u^{s}g(u)du<\infty$, then $\S_{\mu, \psi_g}$ is bounded in $L^2(\mu)$.

(3) Finally, suppose that $\psi$ is non-negative, bounded, measurable, and compactly supported in $(0,\infty)$ (so $0\not\in \supp(\psi)$), with $\S_{\mu, \psi}$ bounded on $L^2(\mu)$.

Writing $\supp(\psi)\subset [a,A]$ for some $a,A>0$, we choose a function $g\in C^{\infty}([0,\infty))$ supported on $[0,\tfrac{2}{a}]$ that takes the value $\bigl(\int_0^{\infty}\psi\bigl(\tfrac{1}{u}\bigl)\tfrac{du}{u}\bigl)^{-1}$ on the interval $[0,\tfrac{1}{a}]$.  Then the function $\psi_g\in C^{\infty}([0,\infty))$ has support contained in $\bigl[0, \tfrac{2A}{a}\bigl]$ and $\psi_g\equiv 1$ on $[0,1]$. From remark (2) we have that $\S_{\mu, \psi_g}$ is bounded on $L^2(\mu)$.

\subsection{The Mayboroda-Volberg Theorem} Building on the tools developed in \cite{Tol2, RdVT}, Mayboroda and Volberg \cite{MV1, MV2} proved that if $\mu$ is a non-trivial finite measure with $\mathcal{H}^s(\supp(\mu))<\infty$, and $\S_{\mu}(1)<\infty$ $\mu$-almost everywhere, then $s\in \mathbb{Z}$ and $\supp(\mu)$ is $s$-rectifiable (see Section \ref{basicdensity} below for the definition).  When combined with Theorem 1.1 of Azzam-Tolsa \cite{AT}, Theorems \ref{thm2} and \ref{thm1} above provide another demonstration of this result.  We sketch the argument here.

One begins with a standard $T(1)$-theorem argument which involves finding a compact subset $E\subset \supp(\mu)$ whose $\mu$ measure is as close to $\mu(\R^d)$ as we wish, for which $\S_{\mu'}$ is bounded in $L^2(\mu')$ with $\mu' = \mu|E$.  This utilizes the method of suppressed kernels, see for instance Proposition 3.2 of \cite{MV1}.  But since $\mu'$ is supported on a set of finite $\mathcal{H}^s$ measure, the conclusion of Theorem \ref{thm2} cannot hold unless $\mu'\equiv 0$, and so $s\in \mathbb{Z}$ and the conclusion of Theorem \ref{thm1} holds.  Theorem 1.1 in \cite{AT} then yields that $\supp(\mu')$ is rectifiable.  From this we conclude that $\supp(\mu)$ is rectifiable.

\section{Preliminaries}

\subsection{Notation}
\begin{itemize}
\item By $C>0$  we denote a constant that may change from line to line.  Any constant may depend on $d$ and $s$ without mention.  If a constant depends on parameters other than $d$ and $s$, then these parameters are indicated in parentheses after the constant.
\item We denote the closure of a set $E$ by $\overline{E}$.
\item For $x\in \R^d$ and $r>0$, $B(x,r)$ denotes the open ball centred at $x$ with radius $r$.
\item By a measure, we shall always mean a non-negative locally finite Borel measure.
\item  We denote by $\Lip(\R^d)$ the collection of Lipschitz continuous functions on $\R^d$.  For an open set $U$, we denote by $\Lip_0(U)$ the subset of $\Lip(\R^d)$ consisting of those Lipschitz continuous functions with compact support in $U$.  We define the homogeneous Lipschitz semi-norm
    $$\|f\|_{\Lip} = \sup_{x,y\in \R^d, x\neq y}\frac{|f(x)-f(y)|}{|x-y|}.
    $$
\item We denote by $\supp(\mu)$ the closed support of $\mu$, that is, $$\supp(\mu) = \mathbb{R}^d\backslash \bigl\{\cup B : B \text{ is an open ball with }\mu(B)=0\bigl\}.$$
\item For a closed set $E$, we shall denote by $\mu|_E$ the restriction of the measure $\mu$ to $E$, that is, $\mu|_E(A) = \mu(A\cap E)$ for a Borel set $A$.
\item For $n\geq 0$, we denote by $\mathcal{H}^{n}$ the $n$-dimensional Hausdorff measure.  When restricted to an $n$-plane, $\mathcal{H}^n$ is equal to a constant multiple of the $n$-dimensional Lebesgue measure $m_n$.
\item For a cube $Q\subset \R^d$, $\ell(Q)$ denotes its side-length. For $A>0$, we denote by $AQ$ the cube concentric to $Q$ of side-length $A\ell(Q)$.
\item Set $Q_0= (-\tfrac{1}{2},\tfrac{1}{2})^d$.  For a cube $Q$, we set $\mathcal{L}_Q$ to be the canonical affine map (a composition of a dilation and a translation) satisfying $\mathcal{L}_Q(Q_0)=Q$.
\item We define the ratio of two cubes $Q$ and $Q'$ by
$$[Q':Q]=\Bigl|\log_2 \frac{\ell(Q')}{\ell(Q)} \Bigl|.$$
\item For any $x\in \R^d$, $r>0$, we set
$$\I_{\mu}(B(x,r)) = \int_{\R^d}\varphi\Bigl(\frac{|x-y|}{r}\Bigl)d\mu(y),
$$
so $\mu(B(x,r))\leq \I_{\mu}(B(x,r))\leq \mu(B(x,2r))$.
\end{itemize}




\subsection{Balls associated to cubes}

 We associate the ball $B_{Q_0} = B(0, 4\sqrt{d})$ to the cube $Q_0 = (-\tfrac{1}{2},\tfrac{1}{2})^d$.  Then for an arbitrary cube $Q$, we set
$$B_Q = \mathcal{L}_Q(B_{Q_0}).
$$
Notice that $B_Q = B(x_Q, 4\sqrt{d}\ell(Q))$, where $x_Q = \mathcal{L}_Q(0)$ is the centre of $Q$.

We associate to the cube $Q_0$ the function $\varphi_{Q_0}(x) = \varphi(\tfrac{|x|}{2\sqrt{d}})$, $x\in \R^d$.  For any other cube $Q$ we set $\varphi_{Q}= \varphi_{Q_0}\circ \mathcal{L}_Q^{-1} = \varphi\bigl(\frac{|\,\cdot\,-x_Q|}{2\sqrt{d} \ell(Q)}\bigl)$.  The reader may wish to keep in mind the following chain of inclusions:
$$3Q\subset B(x_Q, 2\sqrt{d}\ell(Q))\subset \{\varphi_Q= 1\}\subset \supp(\varphi_Q)\subset B_Q.
$$
We set
$$\I_{\mu}(Q) = \int_{\R^d}\varphi_Q\,d\mu \;\;\Bigl(= \int_{B_Q}\varphi_Q\,d\mu\Bigl).
$$
In relation to our previous notation, we have $\I_{\mu}(Q) = \I_{\mu}(\tfrac{1}{2}B_Q)$.
For $n>0$, we define the $n$-density of a cube $Q$ by
$$D_{\mu,n}(Q) = \frac{1}{\ell(Q)^n}\int_{\R^d}\varphi_Q d\mu =  \frac{1}{\ell(Q)^n}\mathcal{I}_{\mu}(Q).
$$
Thus
\begin{equation}\label{densequiv}\frac{\mu(Q)}{\ell(Q)^n}\leq D_{\mu,n}(Q)\leq \frac{\mu(B_Q)}{\ell(Q)^n}\leq \frac{\mu(8\sqrt{d}Q)}{\ell(Q)^n}.
\end{equation}
If $n=s$, then we just write $D_{\mu}(Q)$ instead of $D_{\mu,s}(Q)$.

\subsection{Flatness and transportation coefficients}
For $n\in \mathbb{N}$, the $n$-dimensional $\beta$-coefficient of a measure $\mu$ in a cube $Q$ is given by
$$\beta_{\mu,n}(Q) = \Bigl[\frac{1}{\I_{\mu}(Q)}\inf_{L\in \mathcal{P}_n}\int_{\R^d}  \Bigl(\frac{\dist(x,L)}{\ell(Q)}\Bigl)^2 \varphi_Q(x) d\mu(x)\Bigl]^{1/2},
$$
where, as before, $\mathcal{P}_n$ denotes the collection of $n$-planes in $\R^d$. We shall write
$$\beta_{\mu}(Q) = \beta_{\mu, \lfloor s\rfloor}(Q).
$$

It is easy to see that there is a $n$-plane $L_Q$ such that
$$\beta_{\mu,n}(Q) = \Bigl[\frac{1}{\I_{\mu}(Q)}\int_{\R^d} \Bigl(\frac{\dist(x,L_Q)}{\ell(Q)}\Bigl)^2  \varphi_Q(x)d\mu(x)\Bigl]^{1/2},
$$
and we shall call any plane $L_Q$ satisfying this property an optimal $n$-plane for $\beta_{\mu,n}(Q)$.  The following classical fact will prove very useful for our analysis:

\begin{lem}\label{leastsquares}  Suppose $\nu$ is a non-zero finite measure.  Every $n$-plane $L$ that minimizes the quantity $\int_{\R^d}\dist(x,L)^2d\nu(x)$ contains the centre of mass of $\nu$, that is, the point $\frac{1}{\nu(\R^d)}\int_{\R^d}x\,d\nu(x)\in \R^d$.
\end{lem}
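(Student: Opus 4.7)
The plan is to reduce the problem to the one-parameter family of translates of a fixed optimal plane and use the fact that a quadratic function on a Euclidean space attains its minimum at a unique interior point (computed by setting the gradient to zero).

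First I would fix an optimal $n$-plane $L$, let $L_0$ denote its direction (the linear $n$-subspace parallel to $L$), and write $L = L_0 + a$ for some $a \in L_0^{\perp}$. For $y \in \R^d$ let $P$ denote the orthogonal projection onto $L_0^{\perp}$; then $\dist(y, L_0 + b)^2 = |P(y) - b|^2$ for any $b \in L_0^{\perp}$. Now I would observe that $L$ being optimal in particular forces $a$ to minimize
\[
F(b) = \int_{\R^d} |P(y) - b|^2 \, d\nu(y), \qquad b \in L_0^{\perp},
\]
since every choice $b \in L_0^{\perp}$ corresponds to an admissible competitor $n$-plane $L_0 + b$.

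Next I would expand $F(b) = \nu(\R^d) |b|^2 - 2 \langle b, \int P(y) \, d\nu(y)\rangle + \int |P(y)|^2 \, d\nu(y)$. This is a strictly convex quadratic in $b \in L_0^{\perp}$ (using $\nu(\R^d) > 0$), so the unique minimizer is obtained by setting the gradient to zero:
\[
a \;=\; \frac{1}{\nu(\R^d)} \int_{\R^d} P(y) \, d\nu(y) \;=\; P(c),
\]
where $c = \frac{1}{\nu(\R^d)} \int y \, d\nu(y)$ is the centre of mass (using that $P$ is linear to pass it through the integral).

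Finally, since $P(c) = a$, the point $c$ satisfies $c - a = c - P(c) \in L_0$, so $c \in a + L_0 = L$, which is what we wanted. The only ``obstacle'' is keeping the distinction between the affine plane $L$ and its linear direction $L_0$ straight, and noting that translations within $L_0$ leave $L$ unchanged, so the minimization over $b \in L_0^{\perp}$ really does parametrize distinct competitors and hence really is forced by optimality of $L$.
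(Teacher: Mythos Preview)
Your proof is correct and follows essentially the same approach as the paper: fix the direction of an optimal plane, minimize the quadratic over perpendicular translates, and read off the minimizer by setting the gradient to zero. The paper's version differs only cosmetically---it normalizes so that the centre of mass is at the origin and writes the projection $P$ explicitly via an orthonormal basis $v_{n+1},\dots,v_d$ of $L_0^{\perp}$---but the underlying calculus is identical.
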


\begin{proof}  We may assume that $\int_{\R^d}x \,d\nu(x)=0.$  For a $(d-n)$-dimensional orthonormal set $v_{n+1}, \dots, v_d$, consider the function $F:\R^d\to \R$ given by
$$F(b) = \int_{\R^d}\Bigl|\sum_{j=n+1}^d\langle (b-x), v_j\rangle v_j\Bigl|^2d\nu(x), \; b\in \R^d.
$$
For the $n$-plane $L = b+\text{span}(v_{n+1},\dots, v_d)^{\perp}$ to be a minimizer, we must certainly have that $\nabla F(b)=0$.  But
$$\nabla F(b) =\int_{\R^d} 2\Bigl(\sum_{j=n+1}^d\langle (b-x),v_j\rangle v_j \Bigl)d\nu(x) =2\nu(\R^d)\sum_{j=n+1}^{d}\langle b,v_j\rangle v_j.$$

Thus  $\nabla F(b)=0$ if and only if  $b\in \text{span}(v_{n+1},\dots, v_d)^{\perp}$. Therefore, should $L$ be optimal, then it is necessarily a linear subspace.
\end{proof}

The $n$-dimensional transportation (or Wasserstein) coefficient of a measure $\mu$ in a cube $Q\subset \R^d$ is given by
\begin{equation}\nonumber \alpha_{\mu,n}(Q) = \inf_{\substack{L\in \mathcal{P}_n:\\ L\cap \tfrac{1}{4}B_Q\neq\varnothing}}\sup_{\substack{f\in \Lip_0(3B_Q),\\ \|f\|_{\Lip}\leq \tfrac{1}{\ell(Q)}}}\Bigl|\int_{ \R^d}\varphi_Qf\,d(\mu-\vartheta_{\mu, L}\mathcal{H}^n|_L)\Bigl|,
\end{equation}
where $\vartheta_{\mu,L} = \frac{\I_{\mu}(Q)}{\I_{\mathcal{H}^n|_L}(Q)}$.
In the case when $n=s$ we as will write $\alpha_{\mu}(Q) = \alpha_{\mu, s}(Q)$.


Notice that the $\beta$-number is a gauge of how flat the measure is within a given cube, while the $\alpha$-number tells us how close a measure is to a constant multiple of the Lebesgue measure of an $n$-plane.  As one might expect, for $n\in \mathbb{N}$, we have
$$\beta_{\mu,n}(Q)^2 \leq C\alpha_{\mu,n}(Q).
$$

To see this, take an $n$-plane $L$ that intersects $\tfrac{1}{4}B_Q$.  Then the function
$$f(x) = \Bigl(\frac{\dist(x,L)}{\ell(Q)}\Bigl)^2\varphi_{3Q}
$$
is supported in $3B_Q$ and has Lipschitz norm bounded by $\frac{C}{\ell(Q)}$.  This proves the desired inequality since $\varphi_{3Q}\varphi_Q = \varphi_Q$.

\subsection{The dyadic energies}\label{energies} Consider a dyadic lattice $\dy$.  Then, for any finite measure $\mu$ we have the following two inequalities:

\begin{equation}\label{dyjonesbigger}\mathcal{J}(\mu, \R^d)\leq C\sum_{Q\in \dy}\beta_{\mu}(Q)^2D_{\mu}(Q)^2\I_{\mu}(Q),
\end{equation}
and
\begin{equation}\label{dywolffbigger}\mathcal{W}(\mu, \R^d)\leq C\sum_{Q\in \dy}D_{\mu}(Q)^2\I_{\mu}(Q).
\end{equation}

Both of these inequalities follow from integrating with respect to $\mu$ the pointwise inequalities, for $s\in \mathbb{Z}$,
$$\int_0^{\infty}\beta_{\mu}(B(x,r))^2\Bigl(\frac{\mu(B(x,r))}{r^s}\Bigl)^2\frac{dr}{r}\leq C\sum_{Q\in \dy}\beta_{\mu}(Q)^2D_{\mu}(Q)^2\varphi_Q(x),
$$
and, for $s\in (0,d)$,
$$\int_0^{\infty}\Bigl(\frac{\mu(B(x,r))}{r^s}\Bigl)^2\frac{dr}{r}\leq C\sum_{Q\in \dy}D_{\mu}(Q)^2\varphi_Q(x).
$$
We shall just prove the first pointwise inequality (the second one is easier).  Rewrite the left hand side as
\begin{equation}\label{splitcontjones}\sum_{k\in \mathbb{Z}} \int_{2^k}^{2^{k+1}}\Bigl[\frac{\mu(B(x,r))}{r^s}\inf_{L\in \mathcal{P}_s}\frac{1}{r^s}\int_{B(x,r)}\Bigl(\frac{\dist(y,L)}{r}\Bigl)^2d\mu(y)\Bigl]\frac{dr}{r}.
\end{equation}
For each $x\in \R^d$ and $k\in \mathbb{Z}$, there is a cube $Q\in \dy$ with $\ell(Q)=2^{k+1}$ and $x\in \overline{Q}$.  Then, for $r\in (2^k,2^{k+1})$, $B(x,r)\subset B(x_Q, 2\sqrt{d}\ell(Q))$ and so, for an $s$-plane $L$,
$$\frac{1}{r^s}\int_{B(x,r)}\Bigl(\frac{\dist(y,L)}{r}\Bigl)^2d\mu(y)\leq 2^{s+2}\frac{1}{\ell(Q)^s}\int_{\R^d}\varphi_Q(y)\Bigl(\frac{\dist(y,L)}{\ell(Q)}\Bigl)^2d\mu(y),
$$
while also $\frac{\mu(B(x,r))}{r^s}\leq 2^sD_{\mu}(Q)$, and $\varphi_Q(x)=1$.  Thus the sum (\ref{splitcontjones}) is dominated by a constant multiple of
$$\sum_{k\in \mathbb{Z}}\sum_{Q\in \dy : \ell(Q)=2^{k+1}}\beta_{\mu}(Q)^2D_{\mu}(Q)^2\varphi_Q(x).
$$






\subsection{Lattice stabilization}\label{stabilize} We say that a sequence of dyadic lattices $\mathcal{D}^{(k)}$ stabilizes in a dyadic lattice $\mathcal{D}'$ if every $Q'\in \mathcal{D}'$ lies in $\mathcal{D}^{(k)}$ for sufficiently large $k$.

\begin{lem} Suppose $\mathcal{D}^{(k)}$ is a sequence of dyadic lattices with $Q_0\in \mathcal{D}^{(k)}$ for all $k$.  Then there exists a subsequence of the dyadic lattices that stabilizes to some dyadic lattice $\mathcal{D}'$.
\end{lem}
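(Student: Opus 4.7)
The plan is to run a pigeonhole/diagonal argument on the ancestors of $Q_0$. For each $k\geq 1$ and each $j\geq 0$, let $Q_0^{(j,k)}$ denote the unique cube of $\mathcal{D}^{(k)}$ of side-length $2^j$ containing $Q_0$; note $Q_0^{(0,k)}=Q_0$ by hypothesis, and $Q_0^{(j,k)}\subset Q_0^{(j+1,k)}$ by the lattice structure. The key observation is that since $Q_0^{(j,k)}$ has side $2^j$ and must contain the fixed unit cube $Q_0$, its lower corner can occupy only finitely many positions: there are at most $2^{jd}$ choices for $Q_0^{(j,k)}$. Thus the lattice $\mathcal{D}^{(k)}$ is encoded, up to descent from $Q_0$, by a point in the countable product $\prod_{j\geq 1}\{1,\dots,2^d\}$ of finite sets, which is sequentially compact.

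First, I extract a subsequence: pass to a subsequence along which $Q_0^{(1,k)}$ is constantly equal to some $Q_0^{(1)}$; within that, pass to a further subsequence along which $Q_0^{(2,k)}$ is constantly equal to some $Q_0^{(2)}$; continue inductively and diagonalize. This yields a subsequence $(k_m)_{m\geq 1}$ and cubes $Q_0^{(j)}$, $j\geq 0$, with $Q_0^{(0)}=Q_0$ and $Q_0^{(j,k_m)}=Q_0^{(j)}$ for every $m\geq j$. The nesting $Q_0^{(j)}\subset Q_0^{(j+1)}$ is inherited from each $\mathcal{D}^{(k_m)}$.

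Next, I define $\mathcal{D}'$ to be the collection of all cubes obtained by repeated bisection of some $Q_0^{(j)}$. Consistency is automatic: the bisections of $Q_0^{(j+1)}$ of side at most $2^j$ already include all bisections of $Q_0^{(j)}$ since $Q_0^{(j)}$ is one of the dyadic children of $Q_0^{(j+1)}$. One checks $\mathcal{D}'$ is a dyadic lattice by verifying the three standard properties: (a) $\bigcup_{j\geq 0}Q_0^{(j)}=\R^d$, because $\ell(Q_0^{(j)})=2^j\to\infty$ and all $Q_0^{(j)}$ contain the fixed cube $Q_0$, so cubes of a given scale $2^\ell$ in $\mathcal{D}'$ tile $\R^d$; (b) any two cubes in $\mathcal{D}'$ are either disjoint or one is contained in the other, by the tower of nestings; (c) each cube of $\mathcal{D}'$ has $2^d$ dyadic children in $\mathcal{D}'$.

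Finally, stabilization: given $Q'\in\mathcal{D}'$, by construction $Q'$ is a dyadic subcube of some $Q_0^{(j)}$. For every $m\geq j$ one has $Q_0^{(j,k_m)}=Q_0^{(j)}$, so $Q'$ is a dyadic subcube of a cube in $\mathcal{D}^{(k_m)}$, and therefore $Q'\in\mathcal{D}^{(k_m)}$. There is no substantive obstacle here; the only mild care required is to confirm that the combinatorial object produced by the diagonal extraction really is a dyadic lattice, and this is immediate from the nesting of the $Q_0^{(j)}$.
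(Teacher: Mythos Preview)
Your proof is correct and follows essentially the same approach as the paper: the paper's proof is a one-sentence sketch (``The lemma is proved via a diagonal argument: for every $n\geq 0$, there are $2^{nd}$ ways to choose a dyadic cube of sidelength $2^n$ so that $(-\tfrac{1}{2},\tfrac{1}{2})^d$ is one of its dyadic descendants''), and you have simply supplied the details of that diagonal extraction and the verification that the limit object is a dyadic lattice.
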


The lemma is proved via a diagonal argument:  For every $n\geq0$, there are $2^{nd}$ ways to choose a dyadic cube of sidelength $2^n$ so that $(-\tfrac{1}{2},\tfrac{1}{2})^d$ is one of its dyadic descendants.



\subsection{A basic density result}\label{basicdensity}

 For an integer $n$, a set $E$ is called \emph{$n$-rectifiable} if it is contained, up to an exceptional set of $\mathcal{H}^n$-measure zero, in the union of a countable number of images of Lipschitz mappings $f:\R^n\mapsto\R^d$.  We shall require the following elementary density property of measures supported on rectifiable sets,  whose proof may be found in Mattila \cite{Mat}.

\begin{lem}\label{rectdens}
Suppose that $\mu$ is a measure supported on an $n$-rectifiable set.  Then
$$\liminf\limits_{\substack{Q\ni x,\,\ell(Q)\rightarrow 0}}D_{\mu, n}(Q)>0 \text{ for }\mu\text{-almost every }x\in \R^d.
$$
\end{lem}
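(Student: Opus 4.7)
The plan is to reduce the claim to the positivity of the lower $n$-density
$$\Theta_*^n(\mu,x)\;:=\;\liminf_{r\to 0^+}\frac{\mu(B(x,r))}{r^n}$$
at $\mu$-almost every point, and then to invoke the standard density theory for measures on rectifiable sets.

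For the reduction, I would use that whenever $x\in Q$ one has $B(x,\ell(Q))\subset 3Q\subset\{\varphi_Q=1\}$, which yields
$$D_{\mu,n}(Q)\;=\;\frac{1}{\ell(Q)^n}\int \varphi_Q\,d\mu\;\geq\;\frac{\mu(B(x,\ell(Q)))}{\ell(Q)^n}.$$
Taking $\liminf$ as $\ell(Q)\to 0$ then gives $\liminf_{Q\ni x,\ell(Q)\to 0} D_{\mu,n}(Q)\geq \Theta_*^n(\mu,x)$, so it suffices to show $\Theta_*^n(\mu,x)>0$ for $\mu$-a.e.\ $x$.

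For the main step, I would decompose $\supp\mu = N\cup\bigcup_k M_k$, where $\mathcal{H}^n(N)=0$ and each $M_k$ is a bi-Lipschitz image $g_k(A_k)$ of a bounded Borel set $A_k\subset\R^n$ (the standard structural theorem for rectifiable sets, cf.~\cite{Mat}, Ch.~15). For each $k$, pull $\mu|_{M_k}$ back to a Radon measure $\tilde\mu_k$ on $A_k\subset\R^n$ via $g_k^{-1}$, apply the Lebesgue--Besicovitch differentiation theorem on $\R^n$ (which guarantees $\Theta_*^n(\tilde\mu_k,\tilde x)>0$ at $\tilde\mu_k$-a.e.\ $\tilde x$, as the reciprocal of the a.e.-finite derivative $\lim m_n(B(\tilde x,\rho))/\tilde\mu_k(B(\tilde x,\rho))$), and then push the estimate forward through $g_k$, whose bi-Lipschitz constant is uniform on $A_k$. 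Since $\mu(B(x,r))\geq\mu|_{M_k}(B(x,r))$, this yields $\Theta_*^n(\mu,x)>0$ for $\mu|_{M_k}$-a.e.\ $x$, for every $k$.

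The principal obstacle is dealing with the residual piece of $\mu$ possibly concentrated on the $\mathcal{H}^n$-null exceptional set $N$. Here the decisive input is Mattila's density theory (see \cite{Mat}, Chapter~6): any Radon measure concentrated on a set of vanishing $n$-dimensional Hausdorff measure satisfies $\Theta^{*n}(\mu,x)=+\infty$ at $\mu$-a.e.\ $x$, and a refinement of the underlying covering argument upgrades this upper-density blow-up to the assertion $\Theta_*^n(\mu,x)>0$ (in fact $=+\infty$) at $\mu|_N$-a.e.\ $x$. Assembling the estimates on $\bigcup_k M_k$ and on $N$ yields $\Theta_*^n(\mu,x)>0$ for $\mu$-a.e.\ $x$, which, via the reduction above, completes the proof.
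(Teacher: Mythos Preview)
The paper does not actually prove this lemma; it defers to Mattila's book and explicitly remarks that only the case where $\supp(\mu)$ is locally contained in a finite union of smooth $n$-surfaces is needed. So there is no detailed argument to compare against, and your sketch is already more than the paper offers. Your reduction to the lower density $\Theta_*^n(\mu,x)$ via $B(x,\ell(Q))\subset 3Q\subset\{\varphi_Q=1\}$ is clean and correct, and the treatment of the Lipschitz pieces $M_k$ via pullback and Besicovitch differentiation (applied with the roles of $m_n$ and $\tilde\mu_k$ swapped, so the limit $m_n(B)/\tilde\mu_k(B)$ is finite $\tilde\mu_k$-a.e.) is the right idea and goes through.

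The genuine gap is your handling of the exceptional set $N$ with $\mathcal{H}^n(N)=0$. You correctly cite the upper-density blow-up $\Theta^{*n}(\mu,x)=+\infty$ for $\mu|_N$-a.e.\ $x$, which follows from the standard comparison $\mu(A)\le 2^n\lambda\,\mathcal{H}^n(A)$ whenever $\Theta^{*n}(\mu,\cdot)\le\lambda$ on $A$. But you then assert, without argument, that ``a refinement of the underlying covering argument'' upgrades this to a lower-density statement. That refinement is not supplied, and it is not a standard consequence of Chapter~6 of \cite{Mat}: the density comparison lemmas there are one-sided in the upper density, and there is no obvious Vitali/Besicovitch mechanism that converts a fine cover of balls with $\mu(B(x,r))\le Mr^n$ into a bound of the form $\mu(A)\le CM\,\mathcal{H}^n(A)$. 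As written, this step is an assertion, not a proof.

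Fortunately, for the paper's purposes the issue is moot: in every application (the proofs of Lemma~\ref{downcoef} and Alternative~\ref{nonflatalt}), the limiting $\varphi$-symmetric measure is supported in an $n$-plane or, via Lemma~\ref{flatalt}, in a countable union of smooth $(n-1)$-surfaces, so one may take $N=\varnothing$ in your decomposition. If you restrict the statement to that setting, or more generally to measures supported in a countable union of Lipschitz images with no residual null set, your steps 1--3 constitute a complete proof. If you want the lemma in the generality stated, you must either supply an honest argument for the lower density on $N$ or route through absolute continuity: show first (again by Besicovitch differentiation of $\mathcal{H}^n|_{M_k}$ against $\mu$) that $\mu|_{M_k}\ll\mathcal{H}^n|_{M_k}$ is not needed, but rather bypass $N$ altogether by noting that any point of $N$ already lies in some $M_j$ up to relabelling once you allow all Lipschitz images covering $\supp(\mu)$, not just those covering a fixed rectifiable superset.
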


We shall actually only require this result when the support of $\mu$ is locally contained in a finite union of smooth $n$-surfaces. 



\subsection{The growth condition}\label{growth}    \begin{lem}Fix $s\in (0,d)$. If $\mu$ is a non-atomic measure for which the square function operator $\S_{\mu}$ is bounded in $L^2(\mu)$, then $\sup_{Q\in \mathcal{D}}D_{\mu}(Q)<\infty$ for any lattice $\mathcal{D}$.\end{lem}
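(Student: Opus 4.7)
The plan is a contradiction argument. Suppose there is a cube $Q$ with $D_{\mu}(Q) = M$ arbitrarily large; by rescaling, assume $\ell(Q) = 1$ and $x_Q = 0$, so that $\mu(B_Q) \ge \mathcal{I}_\mu(Q) = M$. Write
$$\mathcal{T}_t(f)(x) := \int_{\R^d} \frac{x-y}{t^{s+1}}\,\varphi\!\left(\tfrac{|x-y|}{t}\right) f(y)\,d\mu(y),$$
so that $|\mathcal{S}_\mu(f)(x)|^2 = \int_0^\infty |\mathcal{T}_t(f)(x)|^2\,\tfrac{dt}{t}$. The aim is to produce a lower bound of the form $\|\mathcal{S}_\mu\|_{L^2(\mu)\to L^2(\mu)} \gtrsim M$, contradicting the assumed boundedness once $M$ is large. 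The argument is a scale-by-scale dichotomy driven by the non-atomicity of $\mu$.

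For each unit vector $v$, non-atomicity provides an offset $c^v \in \R$ such that the hyperplane $H^v = \{y \cdot v = c^v\}$ splits $\mu|_{B_Q}$ into equal-mass halves, with barycenters $b^v_\pm$. Distinguish Case A, $\sup_v |b^v_+ - b^v_-| \ge c_0 \ell(Q)$ (with $c_0 > 0$ to be chosen), from Case B, its complement. In Case A, choose a direction $v$ realizing the lower bound and set $f = \mathbf{1}_{B_Q \cap H^v_+} - \mathbf{1}_{B_Q \cap H^v_-}$; then $\int f\,d\mu = 0$ and $\|f\|_{L^2(\mu)}^2 = \mu(B_Q)$. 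For $x \in 2 B_Q$ and $t$ in a scale-invariant range $[C_1 \ell(Q), C_2 \ell(Q)]$ arranged so that $|x-y|/t < 1$ for every $y \in B_Q$, the mean-zero cancellation yields the exact identity
$$\mathcal{T}_t(f)(x) = -\frac{\mu(B_Q)}{2\, t^{s+1}}\bigl(b^v_+ - b^v_-\bigr).$$
Squaring, integrating $dt/t$ over this range and then $d\mu$ over $2 B_Q$, and using $\mu(2 B_Q) \ge \mu(B_Q) \ge M \ell(Q)^s$ together with the Case A bound on $|b^v_+ - b^v_-|$, one obtains $\|\mathcal{S}_\mu f\|_{L^2(\mu)}^2 \gtrsim M^2 \mu(B_Q) = M^2 \|f\|_{L^2(\mu)}^2$, whence $\|\mathcal{S}_\mu\| \gtrsim M$.

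In Case B, the identity $\tfrac{1}{2}(b^v_+ - b^v_-) \cdot v = \int_{B_Q} |y \cdot v - c^v|\,d\mu(y)/\mu(B_Q)$ combined with Markov's inequality, applied in each coordinate direction together with a union bound, shows that at least half of $\mu(B_Q)$ is contained in a cube $Q'$ centered at the vector of coordinate medians with $\ell(Q') \lesssim \sqrt{c_0}\, \ell(Q)$. A sufficiently small choice of $c_0$ (depending on $d$ and $s$) then forces $D_\mu(Q') \ge 2 D_\mu(Q)$, and one iterates the dichotomy on $Q'$. Either Case A arises at some generation $j$, in which case the Case A estimate gives $\|\mathcal{S}_\mu\| \gtrsim D_\mu(Q_j) \ge 2^j M$ and we are done; or one obtains an infinite nested sequence $\{Q_j\}$ with $D_\mu(Q_j) \to \infty$ whose centers converge to a point $x^*$, at which non-atomicity gives $\mu(\{x^*\}) = 0$.

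The latter scenario is excluded by a tail-integral argument. Test $\mathcal{S}_\mu$ against $f_j = \mathbf{1}_{B(x^*, r_j)}$ with $r_j = \ell(Q_j)$: for $x$ with $|x - x^*| \ge 2 r_j$ and $t$ in a range about $|x - x^*|$ on which $\varphi \equiv 1$, a direct computation gives $|\mathcal{T}_t(f_j)(x)| \gtrsim \mu(B(x^*, r_j))/|x - x^*|^s$, whence
$$\|\mathcal{S}_\mu f_j\|_{L^2(\mu)}^2 \;\gtrsim\; \mu(B(x^*, r_j))^2 \int_{|y - x^*| \ge 2 r_j} \frac{d\mu(y)}{|y - x^*|^{2s}}.$$
The Case B inequality $\mu(B(x^*, r_{k+1})) \ge (1 - \alpha)\,\mu(B(x^*, r_k))$, valid at every generation $k$, yields $\mu(B(x^*,\rho)) \ge (1-\alpha)\mu(B(x^*, r_k))$ throughout $\rho \in [r_{k+1}, r_k]$; inserted into the layer-cake identity $\int |y - x^*|^{-2s}\,d\mu = 2s \int_0^\infty \mu(B(x^*,\rho))\,\rho^{-2s-1}\,d\rho$, this produces a geometric sum over $k$ whose ratio exceeds $1$ for the chosen parameters and therefore diverges as $j \to \infty$. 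Paired with $\|\mathcal{S}_\mu f_j\|^2 \le \|\mathcal{S}_\mu\|^2 \mu(B(x^*, r_j))$ and $\mu(B(x^*, r_j)) \ge (1-\alpha)^j \mu(B(x^*, r_0))$, this forces $\|\mathcal{S}_\mu\| \to \infty$, the desired contradiction. The main technical obstacle is coordinating the constants $c_0$, $\alpha$, and the contraction ratio $\varepsilon = \ell(Q_{j+1})/\ell(Q_j)$ so that Case B simultaneously doubles density at each generation while the geometric-sum ratio in the tail integral stays strictly above $1$.
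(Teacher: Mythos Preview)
Your overall strategy---a dichotomy between ``spread'' (Case A) and ``concentration'' (Case B), iterated down scales---is the same as the paper's. Case A is correctly handled: the mean-zero test function $f=\mathbf{1}_+ - \mathbf{1}_-$ and the exact identity for $\mathcal{T}_t(f)$ give $\|\mathcal{S}_\mu\| \gtrsim c_0\, D_\mu(Q)$, and Case B's Markov/union-bound concentration into a subcube of side $\lesssim\sqrt{c_0}\,\ell(Q)$ carrying at least a $(1-\alpha)$-fraction of the mass (with $\alpha\asymp\sqrt{c_0}$) is also fine.

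The gap is in the all-Case-B endgame. You invoke the layer-cake identity
\[
\int |y-x^*|^{-2s}\,d\mu \;=\; 2s\int_0^\infty \mu(B(x^*,\rho))\,\rho^{-2s-1}\,d\rho,
\]
but the integral you actually need is $I_j:=\int_{|y-x^*|\ge 2r_j}|y-x^*|^{-2s}\,d\mu$, whose layer-cake representation carries $\mu(B(x^*,\rho))-\mu(B(x^*,2r_j))$ in the integrand, not $\mu(B(x^*,\rho))$. Your lower bound $\mu(B(x^*,\rho))\ge(1-\alpha)\mu(B(x^*,r_k))$ for $\rho\in[r_{k+1},r_k]$ therefore does \emph{not} produce the claimed geometric sum for $I_j$; it only shows that the full integral $I_\infty=\lim_j I_j$ is infinite. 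That alone is not enough: you need $\mu(B(x^*,r_j))\cdot I_j\to\infty$, and you only have $\mu(B(x^*,r_j))\ge(1-\alpha)^j\mu_0\to 0$ against $I_j\to\infty$ with no control on the rate. Concretely, if nearly all of $\mu|_{B_{Q_0}}$ sits inside $B(x^*,r_N)$ for some large $N$, then $I_j$ stays essentially zero for $j\le N$ while $\mu(B(x^*,r_j))$ is already down by $(1-\alpha)^j$; the ``geometric ratio exceeds $1$'' claim is simply not available for the truncated integral.

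The paper's fix is exactly the one your last sentence hints at but does not implement: make the concentration factor in Case B \emph{improve with density}. David's pigeonhole argument shows that either two well-separated subcubes of scale $\ell(Q)/A$ each carry a definite fraction of $\mu(Q)$ (yielding, via a test like your Case A, a bound $C_0\le C(A)$), or a single subcube $Q^*$ with $\ell(Q^*)=\ell(Q)/A$ satisfies $\mu(Q^*)\ge\bigl(1-\tfrac{C_0}{\lambda^2}\bigr)\mu(Q)$, where $\lambda=\mu(Q)/\ell(Q)^s$. Iterating, $\lambda^{(j)}$ grows geometrically, so the retained-mass product $\prod_j\bigl(1-C_0/\lambda^{(j)2}\bigr)$ converges to a positive number, forcing $\mu(Q^{(j)})\ge\tfrac12\mu(Q^{(0)})$ for all $j$ and contradicting non-atomicity directly---no tail integral needed. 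If you want to salvage your route, the natural repair is to let $c_0$ shrink with the density (say $c_0\asymp D_\mu(Q)^{-1}$), which converts your fixed $(1-\alpha)$ into a density-dependent factor of the David type.
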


 This lemma is well-known, and is essentially due to G. David.   Since we could only locate a proof in the case of non-degenerate Calder\'{o}n-Zygmund operators rather than the square function, we reproduce a sketch of David's argument (Proposition 1.4 in Chapter 3 of \cite{Dav}) in the context of the square function.  We shall verify that there is a constant $C>0$ such that for any cube $Q\subset \R^d$, $\mu(Q)\leq C\ell(Q)^s$, from which the lemma certainly follows (see (\ref{densequiv})). 

The first step is to use the pigeonhole principle to verify the following:

\textbf{Claim.}  For every integer $A>100$, there exists $C_0>0$, such that for any cube $Q\subset \R^d$, there exists a sub-cube $Q^*\subset Q$, with $\ell(Q^*) = \ell(Q)/A$, satisfying the property that
\begin{equation}\label{pigeoncube}\mu(Q^*) \geq \bigl(1-\frac{C_0}{\lambda^2}\Bigl)\mu(Q),
\end{equation}
where $\lambda = \frac{\mu(Q)}{\ell(Q)^s}$.

Set $\kap = \tfrac{1}{1000}$.   We first locate a cube $Q'\subset Q$ of side-length $\ell(Q')=2\kap A^{-1}\ell(Q)$ satisfying\footnote{The factor of $2$ in the sidelength here is due to the fact that our cubes are open.} $\mu(Q')\geq \kap^d A^{-d}\mu(Q)$. If the lemma fails to hold for a given $C_0>0$, then one can find $Q''\subset Q$, with $\ell(Q'') = \frac{\kap \ell(Q)}{A}$, $d(Q'', Q')\geq \tfrac{\ell(Q)}{5A}$ and satisfying $\mu(Q'')\geq \frac{C_0}{\lambda^2}\frac{\kap^d \mu(Q)}{A^d}$.

Notice that, if $f=\chi_{Q''}$, then have $\S_{\mu}(f)(x) \geq c(A,\kap)\frac{\mu(Q'')}{\ell(Q)^s}$ for $x\in Q'$.  Squaring this bound and integrating over $Q'$ yields that
$$\frac{\mu(Q')\mu(Q'')^2}{\ell(Q)^{2s}}\leq C(A,\kap)\mu(Q''), \text{ and hence } \frac{\mu(Q')\mu(Q'')}{\ell(Q)^{2s}}\leq C(A,\kap).
$$
Plugging in the lower bounds on the measures of $Q'$ and $Q''$ gives
$$\frac{C_0\mu(Q)^{2}}{\lambda ^2\ell(Q)^{2s}}\leq C(A,\kap), \text{ and hence }C_0\leq C(A,\kap).
$$
But this is absurd if $C_0$ was chosen large enough.  The claim is proved.

Starting with any cube $Q^{(0)}$, we iterate the claim to find a sequence of cubes $Q^{(j)}$, $j\geq 0$ with $Q^{(j)}\subset Q^{(j-1)}$, $\ell(Q^{(j)}) = \ell(Q^{(j-1)})/A$, and, with $\lambda^{(j)} = \frac{\mu(Q^{(j)})}{\ell(Q^{(j)})^s}$,
$$\lambda^{(j)}\geq A^s \Bigl(1-\frac{C_0}{(\lambda^{(j-1)})^2}\Bigl)\lambda^{(j-1)}.
$$
Assuming $\lambda^{(0)}\geq 1$ is large enough in terms of $C_0$, we infer by induction that $\lambda^{(j)}\geq A^{s/2}\lambda^{(j-1)}\geq \dots\geq A^{sj/2}\lambda^{(0)}$.  Plugging this back into (\ref{pigeoncube}) yields that for every $j$
$$\mu(Q^{(j)}) \geq \prod_{\ell =1}^{j-1}\Bigl(1-\frac{C_0}{A^{s\ell/2}(\lambda^{(0)})^2}\Bigl)\mu(Q^{(0)}).
$$
Assuming $\lambda^{(0)}$ is large enough, we have that $\mu(Q^{(j)})\geq \frac{1}{2}\mu(Q^{(0)})$ for every $j\geq 1$,
which implies that the non-atomic measure $\mu$ has an atom.  Consequently, there is an absolute bound $C>0$ for which $\lambda^{(0)}\leq C$.  Since $Q^{(0)}$ was an arbitrary cube, we have proved the desired growth condition on the measure.

\section{The basic scheme}

\subsection{Localization to square function constituents}


Let us now suppose that $\mu$ is a measure for which the square function operator $\mathcal{S}_{\mu}$ is bounded in $L^2(\mu)$.  For a dyadic lattice $\dy$, notice that for each $k\in \mathbb{Z}$, the balls $\{AB_{Q}: Q\in \mathcal{D}, \ell(Q)=2^k\}$ have overlap number at most $CA^d$.  Thus,
\begin{equation}\label{squaredecompf}\begin{split}\sum_{Q\in \dy}\int_{AB_Q} \int_{\tfrac{\ell(Q)}{A}}^{A\ell(Q)}\Bigl|\int_{\R^d}&\frac{x-y}{t^{s+1}}\varphi\Bigl(\frac{|x-y|}{t}\Bigl)f(y)d\mu(y)\Bigl|^2\frac{dt}{t}d\mu(x)\\&\leq C(A)\|\S_{\mu}\|^2_{L^2(\mu)\to L^2(\mu)}\|f\|_{L^2(\mu)}^2,
\end{split}\end{equation}
for every $f\in L^2(\mu)$.  Here $C(A) = CA^d\log(A)$,  as each $t\in (0,\infty)$ can lie in at most $C\log(A)$ of the intervals $[2^k/A, 2^kA]$, $k\in \mathbb{Z}$.  The precise form of $C(A)$ is not important.

We shall term the quantity \begin{equation}\label{sqfncoef}\mathcal{S}_{\mu}^A(Q) =\int_{AB_Q} \int_{\tfrac{\ell(Q)}{A}}^{A\ell(Q)}\Bigl|\int_{\R^d}\frac{x-y}{t^{s+1}}\varphi\Bigl(\frac{|x-y|}{t}\Bigl)d\mu(y)\Bigl|^2\frac{dt}{t}d\mu(x),\end{equation}
 a \emph{square function constituent}.  Our aim is to verify the following theorems.

\begin{thm}\label{wolffthm} If $s\not\in \mathbb{Z}$, then there are constants $C>0$ and $A>0$ such that for any measure $\mu$ satisfying $\sup_{Q\in \dy} D_{\mu}(Q)<\infty$, we have
\begin{equation}\label{wolffdom}
\sum_{Q\in \dy} D_{\mu}(Q)^2\I_{\mu}(Q)\leq C\sum_{Q\in \dy}\mathcal{S}^A_{\mu}(Q).
\end{equation}
\end{thm}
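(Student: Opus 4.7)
My plan is to prove a local (per-cube) estimate of the form
\[
D_\mu(Q)^2\,\I_\mu(Q)\;\le\;C\!\!\!\sum_{Q'\in\mathcal{N}(Q)}\mathcal{S}_\mu^A(Q'),
\]
where $\mathcal{N}(Q)$ is a finite family (independent of $Q$ and $\mu$) of dyadic cubes at scales comparable to $\ell(Q)$ and locations within $O(A\ell(Q))$ of $Q$, with $A$ a large constant to be chosen. Since each $Q'$ lies in $\mathcal{N}(Q)$ for only $O(1)$ cubes $Q$, summing this estimate over $Q$ yields Theorem~\ref{wolffthm} with the finite-overlap loss absorbed in $C$. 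The reduction is natural because $\mathcal{S}^A_\mu(Q')$ is sensitive only to scales in $[\ell(Q')/A,A\ell(Q')]$ and locations in $AB_{Q'}$.

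I will establish the local estimate by contradiction and a blow-up argument. Assume failure: there exist measures $\mu_n$ with $\sup_Q D_{\mu_n}(Q)\le\lambda$ and dyadic cubes $Q_n$ at which the ratio $D_{\mu_n}(Q_n)^2\,\I_{\mu_n}(Q_n)/\sum_{Q'\in\mathcal{N}(Q_n)}\mathcal{S}_{\mu_n}^A(Q')$ tends to infinity. Dyadically dilating and translating so that $Q_n=Q_0$ and renormalizing so $D_{\mu_n}(Q_0)=1$, I use the density cap for tightness on compacta and the Lattice stabilization lemma (Section 2.5) to fix a common limit lattice, then extract a weak-$\ast$ subsequential limit $\nu$. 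Weak-$\ast$ continuity of $\mu\mapsto\int\varphi_{Q_0}\,d\mu$ against the compactly supported continuous $\varphi_{Q_0}$ gives $D_\nu(Q_0)=1$, so $\nu\ne 0$. On the other hand, continuity of $\mu\mapsto T_{\varphi,t}(\mu)(x)$ (its kernel is continuous and compactly supported) combined with Fatou's lemma forces $T_{\varphi,t}(\nu)(x)=0$ for $\nu$-almost every $x$ in a large neighborhood of $Q_0$ and every $t$ in the scale window determined by $A$.

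The principal obstacle is the rigidity step: to show that a non-trivial measure $\nu$ with $\sup_Q D_\nu(Q)\le\lambda$ and $T_{\varphi,t}(\nu)(x)=0$ on the indicated set of $(x,t)$ cannot exist when $s\not\in\mathbb{Z}$, thereby contradicting $D_\nu(Q_0)=1$. The key leverage over the Mateu--Prat--Verdera-type statements for the $s$-Riesz transform alone is that the square function delivers vanishing at every scale separately, not merely for the integrated Riesz transform. For $A$ chosen sufficiently large, multiplying $T_{\varphi,t}(\nu)(x)=0$ by $t^\alpha$ and integrating in $t\in[\ell(Q_0)/A,A\ell(Q_0)]$ recovers, up to a truncation tail controlled by the density cap, the vanishing at $\nu$-a.e.\ $x$ of a whole family of Riesz-type integrals $\int\!\frac{x-y}{|x-y|^{s+1-\alpha}}\,d\nu(y)$; the multiplicative convolution trick from remark (2) of the introduction gives still further vanishing relations by varying the cutoff. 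Combining this rich family of identities through a Fourier-side analysis or an iterated multipole expansion, and crucially exploiting $s\not\in\mathbb{Z}$, forces $\nu$ to concentrate on an integer-dimensional affine plane; the density cap and non-triviality of $\nu$ then demand that the dimension of that plane equal $s$, the desired contradiction. Making this rigidity quantitative and stable under weak-$\ast$ limits is the principal technical hurdle, and is precisely where the square function hypothesis strictly exceeds mere $L^2(\mu)$-boundedness of the $s$-Riesz transform, permitting us to bypass the open Mateu--Prat--Verdera conjecture in the range $s\in(1,d-1)\setminus\mathbb{Z}$.
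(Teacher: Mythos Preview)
Your proposed local estimate
\[
D_\mu(Q)^2\,\I_\mu(Q)\;\le\;C\!\!\sum_{Q'\in\mathcal{N}(Q)}\mathcal{S}_\mu^A(Q')
\]
with $\mathcal{N}(Q)$ a finite family at scales and locations comparable to $Q$ is simply false, and this is the fundamental gap. Take $\mu=\mathcal{H}^n|_{L\cap B(0,R)}$ with $n=\lceil s\rceil$, $L$ an $n$-plane through the origin, and $R$ a large parameter. For the unit cube $Q_0$ one has $D_\mu(Q_0)\asymp 1$ and $\I_\mu(Q_0)\asymp 1$, yet for every $Q'\in\mathcal{N}(Q_0)$ the constituent $\mathcal{S}_\mu^A(Q')$ vanishes as soon as $R\gg A^2$, because $\mu$ is exactly $\varphi$-symmetric in the relevant region. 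The density cap for this measure is $\sup_Q D_\mu(Q)\asymp R^{\,n-s}$, finite for each $R$; so for every candidate pair $(C,A)$ there is an admissible measure violating your inequality. In your blow-up this manifests as follows: after normalizing so that $D_{\mu_n}(Q_0)=1$, the density caps $\lambda_n$ need not stay bounded, so you lose tightness; and even if they did, the limit $\nu$ would only satisfy $T_{\varphi,t}(\nu)=0$ for $t$ in the fixed window $[1/A,A]$, which is not enough to run any structure theorem for $\varphi$-symmetric measures.

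The paper circumvents exactly this obstruction by \emph{not} attempting a pointwise estimate on all cubes. Instead it introduces two filters. First, domination from above (Section~8, with $\Upsilon(Q)=D_\mu(Q)$) restricts to cubes $Q\in\dy_{\text{up}}$ for which $D_\mu(Q')\le 2^{\eps[Q':Q]}D_\mu(Q)$ whenever $\tfrac12 B_{Q'}\supset B_Q$; after normalization this gives polynomial growth at large scales \emph{independently of any global density cap}, which supplies the missing tightness. Second, the contradiction argument (Alternative~8.3) lets the parameter $A=k\to\infty$ along the sequence, so the weak limit is genuinely $\varphi$-symmetric for \emph{all} $t>0$; the structure theorem (Proposition~4.9) then forces $\supp(\nu)$ into an $\lfloor s\rfloor$-plane, hence $(\lceil s\rceil-1)$-rectifiable, and domination from below (Section~6, Lemma~6.5) produces a dominating bunch contradicting the filter. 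The point is that the square-function control has to come from scales that may be arbitrarily far from $\ell(Q)$, and the two filters are what organize this non-locality. Your sketch of the rigidity step (integrating against $t^\alpha$ over a bounded $t$-window to recover Riesz-type vanishing, then a vague ``Fourier-side analysis'') does not substitute for this machinery; the truncation tails are not small without growth control from above, and the conclusion that $\nu$ lives on an integer-dimensional plane is essentially Proposition~4.9, whose proof requires the full $\varphi$-symmetry.
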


\begin{thm}\label{jonesthm} If $s\in \mathbb{Z}$, then there are constants $C>0$ and $A>0$ such that for any measure $\mu$ satisfying $\sup_{Q\in \dy} D_{\mu}(Q)<\infty$, we have
\begin{equation}\label{jonesdom}
\sum_{Q\in \dy} \beta_{\mu}(Q)^2D_{\mu}(Q)^2\I_{\mu}(Q)\leq C\sum_{Q\in \dy}\mathcal{S}^A_{\mu}(Q).
\end{equation}
\end{thm}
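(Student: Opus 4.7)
My plan is to prove Theorem \ref{jonesthm} by a compactness-and-rigidity scheme, whose guiding heuristic is that the square function constituent $\S_\mu^A(Q)$ probes the flatness of $\mu$ at the scale of $Q$: vanishing of every constituent should rigidly force $\mu$ to be a constant multiple of $\mathcal{H}^s$ on a single $s$-plane, making each $\beta_\mu(Q)$ vanish.

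Arguing by contradiction, suppose no constant $C$ suffices. Then there exist measures $\mu_k$ (with uniformly bounded $s$-densities, as guaranteed by Section \ref{growth}) and lattices $\dy_k$ for which
\[
\sum_{Q\in\dy_k}\beta_{\mu_k}(Q)^2 D_{\mu_k}(Q)^2\I_{\mu_k}(Q)\;\Big/\;\sum_{Q\in\dy_k}\S_{\mu_k}^A(Q)\longrightarrow\infty.
\]
By a localization step I would select a cube $Q_k\in\dy_k$ which carries a fixed positive fraction of the Jones energy, then translate and rescale so that $Q_k=Q_0$ and the total masses are of the same order. The lattice stabilization lemma of Section \ref{stabilize} supplies a subsequence whose lattices $\dy_k$ stabilize to a limit $\dy_\infty\ni Q_0$, and the uniform density bound yields a local weak-$*$ limit $\mu_\infty$. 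A routine check that the functional $\mu\mapsto\S_\mu^A(Q)$ is continuous along this convergence for those $Q$ whose $AB_Q$ boundaries are $\mu_\infty$-null (all but a countable set of scales) then gives $\S_{\mu_\infty}^A(Q)=0$ for every $Q\in\dy_\infty$---equivalently $T^t(\mu_\infty)(x)=0$ for $\mu_\infty$-a.e.\ $x$ and every $t>0$---while $\beta_{\mu_\infty}(Q_0)^2 D_{\mu_\infty}(Q_0)^2\I_{\mu_\infty}(Q_0)$ is bounded below by a positive constant.

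The decisive step, and the main obstacle, is a rigidity theorem: one must show that the identity $\int\frac{x-y}{t^{s+1}}\varphi(|x-y|/t)\,d\mu_\infty(y)=0$, holding for $\mu_\infty$-a.e.\ $x$ and every $t>0$, forces $\mu_\infty$ to be a constant multiple of $\mathcal{H}^s$ on a single $s$-plane. A direct computation for the model $\nu=\vartheta\,\mathcal{H}^s|_L$ shows that $T^t(\nu)(x)$ is normal to $L$ with magnitude proportional to $\vartheta\,F(\dist(x,L)/t)/t$, where $F$ is smooth with $F(0)=0$ and $F'(0)>0$; in particular, plane measures with uniform density are the only ``fixed points'' of the vanishing relation. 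Differentiating the vanishing identity in $t$ and exploiting its invariance across scales would yield moment conditions on $\mu_\infty$ across nested annuli, and an inductive argument---leaning essentially on the integrality of $s$, since for $s\notin\mathbb{Z}$ one expects only Wolff control in the spirit of Theorem \ref{wolffthm}---pushes these to enforce the plane-supported structure. This contradicts $\beta_{\mu_\infty}(Q_0)>0$, completing the argument.
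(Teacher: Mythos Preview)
Your compactness-and-rigidity scheme is the right instinct, and the paper does argue via blow-ups to $\varphi$-symmetric limits. But there is a genuine gap. First, your rigidity claim is false as stated: Section~4 gives explicit $\varphi$-symmetric measures that are not plane measures (sums of parallel translates of lower-dimensional Hausdorff measure). What is true, and would suffice, is that a $\varphi$-symmetric measure with $\limsup_R\mu(B(0,R))/R^s<\infty$ has support in an $s$-plane (Lemma~\ref{maxgrowth}), hence $\beta_\mu(Q_0)=0$. But you do not have that growth: the hypothesis $\sup_Q D_{\mu_k}(Q)<\infty$ is not uniform in $k$, and after rescaling to $\I_{\mu_k}(Q_0)=1$ there is no a~priori bound on $\mu_k(B(0,R))$ for large $R$, so you cannot even extract a weak limit with power growth. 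The paper manufactures this growth by first passing to the sub-collection $\dyup(\mu)$ of cubes not dominated from above (see (\ref{densbetadoub})), which by Lemma~\ref{upcubescontribute} still carries a fixed fraction of the Jones energy.

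The decisive gap, however, is that nothing in your selection prevents $\beta_{\mu_k}(Q_0)\to 0$. A measure can be arbitrarily close to an $s$-plane measure, so that $\beta_{\mu_k}(Q_0)$ is tiny, while the ratio $\beta_{\mu_k}(Q_0)^2 D_{\mu_k}(Q_0)^2\I_{\mu_k}(Q_0)\big/\S_{\mu_k}^A(Q_0)$ is huge; in that regime your limit \emph{is} supported in a plane, $\beta_{\mu_\infty}(Q_0)=0$, and no contradiction arises. This is not a technicality but the heart of the problem. The paper splits $\dyup(\mu)$ according to whether $\alpha_\mu(\Lambda Q)\geq\alpha$ or not. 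The first case is handled by a blow-up of roughly your type (Alternative~\ref{nonflatalt}), together with a domination-from-below filter to dispose of limits with $(s-1)$-rectifiable support. The second case, where $\mu$ is genuinely close to a plane, requires a \emph{different} blow-up (Proposition~\ref{flatcoef}, Section~10): one stretches $\mu_k$ by a factor $1/\beta_k$ in the directions orthogonal to the optimal $s$-plane, and shows the stretched limit is supported on the graph of a bounded harmonic function over an $s$-ball of radius $\sim R$. Interior estimates for harmonic functions then give $\beta_{\mu_k}(Q_0)\leq (C/R)\,\beta_{\mu_k}(\wh Q_0)$ for an ancestor $\wh Q_0$ of side $\sim R$, which for $R$ large contradicts the $\dyup$ condition. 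Your outline has no analogue of this step, and without it the argument cannot close.
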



To see that Theorems \ref{thm2} and \ref{thm1} follow from Theorems \ref{jonesthm} and \ref{wolffthm} respectively, let us again assume that $\mu$ is a measure for which $\S_{\mu}$ is bounded on $L^2(\mu)$.  Then from Section \ref{growth} we see  that the condition $\sup_{Q\in \dy}D_{\mu}(Q)<\infty$ holds.  Fix a cube $P\in \dy$.  By testing the inequality (\ref{squaredecompf}) against the function $f=\chi_P$, we observe that the measure $\mu|_P$ satisfies
$$\sum_{Q\in \dy} \S_{\mu|_P}^A(Q)\leq C(A)\|S_{\mu}\|^2_{L^2(\mu)\to L^2(\mu)}\mu(P)
$$
for every $A>0$.  Now, from Theorems  \ref{jonesthm} and \ref{wolffthm} applied to $\mu|_P$, we find that if $s\in \mathbb{Z}$, then there is a constant $C>0$ such that
\begin{equation}\label{whatwewant}\sum_{Q\in \dy} \beta_{\mu|_P}(Q)^2D_{\mu|_P}(Q)^2\I_{\mu|_P}(Q)\leq C\|S_{\mu}\|^2_{L^2(\mu)\to L^2(\mu)}\mu(P),
\end{equation}
while, if $s\not\in \mathbb{Z}$, then then there is a constant $C>0$ such that
\begin{equation}\label{whatwewant2}\sum_{Q\in \dy} D_{\mu|_P}(Q)^2\I_{\mu|_P}(Q)\leq C\|S_{\mu}\|^2_{L^2(\mu)\to L^2(\mu)}\mu(P).
\end{equation}
Making reference to Section \ref{energies}, we conclude that the energy conditions (\ref{jonesenergy}) and (\ref{wolffenergy}) hold.

\subsection{The general principle that we will use over and over again}\label{principle}

Consider a rule $\Gamma$ that associates to each measure $\mu$ a function $$\Gamma_{\mu}:\mathcal{D}\rightarrow [0,\infty).$$

\begin{principle}  Fix $A>1$ and $\Delta>0$.  If we can verify the following statement:
\begin{gather}\text{for every measure }\mu\text{ and }Q\in \dy, \;\mathcal{S}_{\mu}^A(Q)\geq \Delta \Gamma_{\mu}(Q)\mathcal{I}_{\mu}(Q), \label{bigsquarecoef}
\end{gather}
then we get that
\begin{equation}\label{Gammasum}\sum_{Q\in \dy}\Gamma_{\mu}(Q)\mathcal{I}_{\mu}(Q)\leq \frac{1}{\Delta}\sum_{Q\in \dy}S_{\mu}^A(Q).
\end{equation}
\end{principle}

Comparing (\ref{Gammasum}) with (\ref{jonesdom}) and (\ref{wolffdom}), it is natural to attempt to verify (\ref{bigsquarecoef}) with the choice $$\Gamma_{\mu}(Q) = \begin{cases} \beta_{\mu}(Q)^2D_{\mu}(Q)^2  \text{ for }s\in \mathbb{Z},\\ \;\; D_{\mu}(Q)^2\text{ for }s\not\in \mathbb{Z}.\end{cases}$$  Unfortunately this is not possible.  As such, we shall use the general principle in a more convoluted way.

The key to proving Theorems \ref{jonesthm} and \ref{wolffthm} is to first understand the properties of measures for which no non-zero square function constituent can be found in any cube.  Following Mattila \cite{Mat, Mat2}, we call such measures $\varphi$-\emph{symmetric}.

\section{The structure of $\varphi$-symmetric measures}


A measure $\mu$ is called \emph{$\varphi$-symmetric} if
$$\int_{\R^d}(x-y)\varphi\Bigl(\frac{|x-y|}{t}\Bigl)d\mu(y)=0 \text{ for every }x\in \supp(\mu) \text{ and }t>0.
$$

We followed Mattila in the nomenclature: A measure is called symmetric if $\int_{B(x,r)}(x-y)d\mu(y)=0$ for every $x\in \supp(\mu)$ and $r>0$.  Of course this is a closely related object to the $\varphi$-symmetric measure, and we will lean heavily on the theory of symmetric measures developed by Mattila \cite{Mat2} and Mattila-Preiss \cite{MP}.

The reader may want to keep in mind the following example of a $\varphi$-symmetric measure: For a linear subspace $V$ of dimension $k\in \{0,\dots ,d\}$, a uniformly discrete set $E$ with $E\cap V=\{0\}$ that is symmetric about each of its points (that is, if $x\in E$, and $y\in E$, then $2y-x\in E$), and a non-negative symmetric function $f$ on $E$ (symmetry here means that if $x,y\in E$, then $f(x) = f(2y-x)$), form the measure
$$\mu = \sum_{x\in E}f(x)\mathcal{H}^k|_{V+x}.
$$
Then $\mu$ is $\varphi$-symmetric.  Provided that $\varphi$ is reasonably `non-degenerate', we expect that every $\varphi$-symmetric measure (with $0\in \supp(\mu)$) takes the above form, but we do not explore this too much here.

\subsection{Doubling scales}

Fix $\tau=1000\sqrt{d}$ and a constant $C_{\tau}>\tau^d$ to be chosen later. We shall call $R>0$ a doubling scale, or doubling radius, if
$$\mathcal{I}_{\mu}(B(0,\tau R))\leq C_{\tau}\mathcal{I}_{\mu}(B(0,R)).
$$

For $\lambda\in (0,\infty)$, we say that a measure has $\lambda$-power growth if
\begin{equation}\label{polygrowthdef}\limsup_{R\to\infty} \frac{\mu(B(0,R))}{R^{\lambda}}<\infty.
\end{equation}

\begin{lem}\label{doubexistence}  Suppose that $\mu$ is a measure with $\lambda$-power growth for some $\lambda\in (0,\infty)$. If $C_{\tau}>\tau^{\lambda}$,  then for every $R>0$, there is  a doubling scale $R'>R$.
\end{lem}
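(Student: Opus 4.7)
The plan is to argue by contradiction using a geometric iteration of the failure of the doubling condition. Suppose that no doubling scale exists in $(R, \infty)$, so that
$$\I_\mu(B(0, \tau r)) > C_\tau \I_\mu(B(0, r)) \quad \text{for every } r > R.$$
If $\mu$ is the zero measure every scale is trivially doubling, so I may assume $\mu$ is non-zero. Since $\mu$ is locally finite and non-zero, I can pick $r_0 > R$ large enough that $\I_\mu(B(0, r_0)) > 0$ (take $r_0$ large enough that $B(0, r_0)$ contains a ball of positive $\mu$-mass, using that $\varphi \equiv 1$ on $[0,1)$).

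Next, I iterate the non-doubling inequality along the geometric sequence $r_k = \tau^k r_0$. Since each $r_k > R$, repeated application gives
$$\I_\mu(B(0, \tau^k r_0)) > C_\tau^k \, \I_\mu(B(0, r_0)).$$
Combining with the upper bound $\I_\mu(B(0, r)) \leq \mu(B(0, 2r))$ and rearranging yields
$$\frac{\mu(B(0, 2\tau^k r_0))}{(2\tau^k r_0)^\lambda} \;>\; \frac{\I_\mu(B(0, r_0))}{(2 r_0)^\lambda} \Bigl(\frac{C_\tau}{\tau^\lambda}\Bigr)^{\!k}.$$
Because $C_\tau > \tau^\lambda$ by hypothesis and $\I_\mu(B(0, r_0)) > 0$, the right-hand side tends to $+\infty$ as $k \to \infty$. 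This contradicts the $\lambda$-power growth hypothesis (\ref{polygrowthdef}), so a doubling scale must exist in $(R, \infty)$.

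There is really no hard step here; the argument is a clean pigeonhole/iteration. The only minor subtlety is ensuring that the iteration is seeded at a scale where $\I_\mu$ is strictly positive, so that multiplying by $C_\tau$ at each step actually produces growth rather than the vacuous inequality $0 > 0$. This is handled once and for all by choosing $r_0$ large enough to see some mass of $\mu$.
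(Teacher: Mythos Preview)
Your proof is correct and follows essentially the same approach as the paper: iterate the failure of doubling along the geometric sequence $\tau^k r_0$ to force $\I_\mu(B(0,\tau^k r_0))/(\tau^k r_0)^\lambda \to \infty$, contradicting $\lambda$-power growth. The paper seeds the iteration at $R$ itself after observing one may assume $\I_\mu(B(0,R))>0$, whereas you pass to some $r_0>R$ with positive $\I_\mu$; this is a cosmetic difference only.
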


\begin{proof}  Since the statement is trivial if $\mu$ is the zero measure, we may assume that $\I_{\mu}(B(0,R))>0$. We consider radii of the form $\tau^kR$, $k\in \mathbb{N}$.  If none of these radii are doubling, then for every $k\in \mathbb{N}$ we have
\begin{equation}\begin{split}\nonumber\mathcal{I}_{\mu}(B(0,\tau^{k+1}R))&\geq C_{\tau}  \mathcal{I}_{\mu}(B(0,\tau^kR))\geq C_{\tau}^{k}\mathcal{I}_{\mu}(B(0,\tau R))\\&\geq C_{\tau}^{k}\mathcal{I}_{\mu}(B(0,R)).
\end{split}\end{equation}
But then as $C_{\tau}> \tau^{\lambda}$, we infer that
$$\lim_{k\to\infty}\frac{\mathcal{I}_{\mu}(B(0, \tau^k R))}{\tau^{k\lambda}}=\infty,
$$
which violates the growth condition (\ref{polygrowthdef}). Thus, under this condition on $C_{\tau}$, there exists some doubling scale $R'=\tau^kR$ with $k\geq 1$.\end{proof}

\subsection{Behaviour at infinity}








We next prove a variation of a powerful perturbation result used by Mattila-Preiss \cite{MP}.  

\begin{lem}[The Mattila-Preiss Formula]\label{MPformula}  Let $\mu$ be a $\varphi$-symmetric measure.  Suppose that $0\in\supp(\mu)$ and $x\in \supp(\mu)$.  Then,  whenever $R$ is a doubling radius with $R>|x|$,
$$\sup_{r\in [R,2R]}\Bigl|x+\frac{1}{\mathcal{I}_{\mu}(B(0,r))}\int_{\R^d}\frac{y}{r} \varphi'\Bigl(\frac{|y|}{r}\Bigl)\Bigl\langle\frac{y}{|y|},x\Bigl\rangle d\mu(y)\Bigl|\leq \frac{C_{\tau}C|x|^2}{R}
$$
\end{lem}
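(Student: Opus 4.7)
The plan is to apply the $\varphi$-symmetry at both $0$ and $x$ (which both lie in $\supp(\mu)$), and then to Taylor-expand the resulting kernel difference in the displacement $x$. Symmetry at $0$ gives $\int_{\R^d}y\,\varphi(|y|/r)\,d\mu(y)=0$ for every $r>0$, while symmetry at $x$ gives $x\,\I_\mu(B(x,r))=\int_{\R^d}y\,\varphi(|x-y|/r)\,d\mu(y)$. Subtracting produces the starting identity
\[
x\,\I_\mu(B(x,r))=\int_{\R^d}y\bigl[\varphi(|x-y|/r)-\varphi(|y|/r)\bigr]\,d\mu(y).
\]

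The second step is Taylor expansion. From $|x-y|=|y|-\langle x,y/|y|\rangle+O(|x|^2/|y|)$ and $\varphi(a+\delta)-\varphi(a)=\varphi'(a)\delta+\tfrac{1}{2}\varphi''(\xi)\delta^2$ one obtains
\[
\varphi(|x-y|/r)-\varphi(|y|/r)=-\tfrac{1}{r}\varphi'(|y|/r)\bigl\langle y/|y|,x\bigr\rangle+E(x,y,r),
\]
with $|E(x,y,r)|\leq C|x|^2/r^2$ wherever it is nonzero: on $\{\varphi'(|y|/r)\neq 0\}$ we have $|y|\asymp r$, so the nominally $O(|x|^2/(r|y|))$ correction to the linear part becomes $O(|x|^2/r^2)$; on the support of the Taylor remainder, $|\delta|\lesssim |x|/r$ delivers the same bound. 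Since $\varphi(|y|/r)$ and $\varphi(|x-y|/r)$ both vanish once $|y|>3r$ (using $|x|<R\leq r$), the integral of the error is supported in $B(0,3r)$, yielding
\[
\Bigl|x\,\I_\mu(B(x,r))+\tfrac{1}{r}\int y\,\varphi'(|y|/r)\bigl\langle y/|y|,x\bigr\rangle\,d\mu(y)\Bigr|\leq C\tfrac{|x|^2}{r}\mu(B(0,3r)).
\]

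The doubling hypothesis at $R$ then absorbs the error. For $r\in[R,2R]$ one has $3r\leq 6R\ll\tau R$, so by monotonicity and the defining property of a doubling scale,
\[
\mu(B(0,3r))\leq\I_\mu(B(0,3r))\leq\I_\mu(B(0,\tau R))\leq C_\tau\I_\mu(B(0,R))\leq C_\tau\I_\mu(B(0,r)).
\]
Finally, to replace $\I_\mu(B(x,r))$ by $\I_\mu(B(0,r))$ in the first term, I would run the same expansion without the factor $y$ to conclude $|\I_\mu(B(x,r))-\I_\mu(B(0,r))|\leq C|x|/r\cdot\mu(B(0,3r))\leq CC_\tau|x|/r\cdot\I_\mu(B(0,r))$. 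Multiplying this by $|x|$ yields another error of order $CC_\tau|x|^2/r\cdot\I_\mu(B(0,r))$. Combining everything, dividing by the positive quantity $\I_\mu(B(0,r))$ (nonzero because $0\in\supp(\mu)$), and bounding $1/r\leq 1/R$ delivers the claim uniformly in $r\in[R,2R]$.

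The main bookkeeping obstacle is guaranteeing that the effective support of every Taylor error, with and without the factor $y$, sits inside a fixed multiple of $r$ so that a single doubling hypothesis at $R$ covers every $r\in[R,2R]$ uniformly. The choice $\tau=1000\sqrt{d}$ is comfortably larger than the radii $3r\leq 6R$ that appear in the analysis, which is precisely what allows the one-shot doubling assumption to suffice; this also pins down the final constant as a multiple of $C_\tau$.
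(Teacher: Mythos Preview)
Your argument is correct and follows essentially the same strategy as the paper's: invoke $\varphi$-symmetry at both $0$ and $x$, Taylor-expand the kernel to second order, and absorb the error via the doubling hypothesis. The only organizational difference is that the paper applies the multivariate Taylor expansion to $\psi(\cdot)=\varphi(|\cdot|)\in C^\infty_0(\R^d)$ and multiplies through by $(x-y)$ rather than by $y$; this lands directly on $x\,\I_\mu(B(0,r))$ and so avoids your separate step of converting $\I_\mu(B(x,r))$ to $\I_\mu(B(0,r))$.
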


This formula does not appear precisely as stated in \cite{MP}.  The formulation is rather close to that of Lemma 8.2 in \cite{Tol4}, which in turn was strongly influenced by the techniques in \cite{MP}.


\begin{proof}  Since $\varphi\equiv 1$ on $[0,1]$, the function $\psi(x)=\varphi(|x|)$ lies in $C^{\infty}_0(B(0,2))$.  Taylor's theorem ensures that for each $y\in \R^d$,
\begin{equation}\label{MVT}
\varphi\Bigl(\frac{|x-y|}{r}\Bigl) = \varphi\Bigl(\frac{|y|}{r}\Bigl) - \frac{1}{r}\Bigl\langle x, \frac{y}{|y|}\Bigl\rangle \varphi'\Bigl(\frac{|y|}{r}\Bigl)+ \frac{E_{x,r}(y)}{2},
\end{equation}
where, for some $z$ on the line segment between $0$ and $x$,
$$E_{x,r}(y) = \frac{1}{r^2}\Bigl\langle x,D^2\psi\Bigl(\frac{z-y}{r}\Bigl)x\Bigl\rangle.
$$
Therefore, if $r>|x|$, then
$$|E_{x,r}(y)|\leq C\frac{|x|^2}{r^2}\chi_{B(0, 3r)}(y)\leq C\frac{|x|^2}{r^2}\varphi\Bigl(\frac{|y|}{3r}\Bigl).
$$




Now, since both $x$ and $0$ lie in $\supp(\mu)$, we have
$$\int_{\R^d}(x-y)\varphi\Bigl(\frac{|x-y|}{r}\Bigl)d\mu(y)=0,
$$
and also $\int_{\R^d}y\varphi\bigl(\frac{|y|}{r}\bigl)d\mu(y) = 0$.  Whence, for $r>|x|$,
\begin{equation}\begin{split}\nonumber&\Bigl|\int_{\R^d}x\varphi\Bigl(\frac{|y|}{r}\Bigl)d\mu(y)  -\int_{\R^d}(x-y)\frac{1}{r}\Bigl\langle \frac{y}{|y|},x\Bigl\rangle \varphi'\Bigl(\frac{|y|}{r}\Bigl) d\mu(y)\Bigl|\\ &\leq \int_{\R^d}|x-y||E_{x,r}(y)|d\mu(y)\leq C\frac{|x|^2}{r}\I_{\mu}(B(0, 3r)).
\end{split}\end{equation}
In conjunction with the straightforward estimate $$\int_{\R^d}\frac{|x|}{r}\Bigl|\Bigl\langle \frac{y}{|y|},x\Bigl\rangle\Bigl|\Bigl|\varphi'\Bigl(\frac{|y|}{r}\Bigl)\Bigl| d\mu(y)\leq \frac{C|x|^2}{r}\mu(B(0, 2r))\leq \frac{C|x|^2}{r}\I_{\mu}(B(0, 3r)),$$ we infer that
\begin{equation}\begin{split}\label{afterTaylor}&\Bigl|x\I_{\mu}(B(0,r))  +  \int_{\R^d}  \frac{y}{r} \varphi'\Bigl(\frac{|y|}{r}\Bigl)\Bigl\langle\frac{y}{|y|},x\Bigl\rangle d\mu(y)\Bigl| \leq \frac{C|x|^2}{r}\I_{\mu}(B(0, 3r)).
\end{split}\end{equation}
Finally, suppose $r\in [R,2R]$, with $R>|x|$ a doubling radius.  Then $\I_{\mu}(B(0,3r))\leq \I_{\mu}(B(0,\tau R))\leq C_{\tau}\I_{\mu}(B(0,r)).$ Thus, after dividing both sides of (\ref{afterTaylor}) by $\I_{\mu}(B(0,r))$, we arrive at the desired inequality.
\end{proof}

A variant of this formula was used in \cite{MP} to derive a growth rate at infinity of a symmetric measure.  We repeat their argument in the form of the following lemma, as we are working under different assumptions on the measure.

\begin{lem}[The growth lemma]\label{growthlem} Let $\mu$ be a $\varphi$-symmetric measure with $0\in \supp(\mu)$.  If $x_1, \dots, x_k$ is a maximal linearly independent set in $\supp(\mu)$, and $R$ is a doubling radius with $R>\max(|x_1|,\dots,|x_k|)$, then
$$ \sup_{r\in [R,2R]}\Bigl|k -r\frac{\tfrac{d}{dr}\mathcal{I}_{\mu}(B(0,r))}{\I_{\mu}(B(0,r))}\Bigl| \leq \frac{C(C_{\tau}, x_1,\dots,x_k)}{R}.
$$
\end{lem}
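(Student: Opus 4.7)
The plan is to extract the scalar identity in the statement from the vector-valued Mattila-Preiss Formula via a trace argument. For each $r\in[R,2R]$ I introduce the symmetric linear map
$$M_r v=\int_{\R^d}\frac{y\langle y,v\rangle}{r|y|}\varphi'\Bigl(\frac{|y|}{r}\Bigr)\,d\mu(y),\qquad v\in\R^d,$$
so that the Mattila-Preiss Formula, applied to each $x_i$, rearranges to the family of vector identities
$$(M_r+\I_\mu(B(0,r))\,I)\,x_i=\I_\mu(B(0,r))\,\widetilde E_i,\qquad |\widetilde E_i|\le C_\tau C|x_i|^2/R.$$
Maximality of $\{x_1,\dots,x_k\}$ as a linearly independent subset of $\supp(\mu)$ forces $\supp(\mu)\subset V:=\Span(x_1,\dots,x_k)$, so $M_r$ preserves $V$ and I may legitimately restrict to it.

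The next step is to compute $\operatorname{tr}(M_r|_V)$ in two ways. Fixing an orthonormal basis of $V$ and using that $|P_V y|=|y|$ for $\mu$-a.e.\ $y$, a direct calculation together with differentiation under the integral sign gives
$$\operatorname{tr}(M_r|_V)=\int\frac{|y|}{r}\varphi'\Bigl(\frac{|y|}{r}\Bigr)\,d\mu(y)=-r\,\frac{d}{dr}\I_\mu(B(0,r)).$$
On the other hand, letting $\widetilde X$ and $\widetilde E$ denote the $k\times k$ matrices whose columns are $x_1,\dots,x_k$ and $\widetilde E_1,\dots,\widetilde E_k$ in the chosen basis, the $k$ vector equations above assemble into the single matrix identity $(M_r|_V+\I_\mu I)\widetilde X=\I_\mu\widetilde E$. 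Since $\{x_i\}$ is a basis of $V$, $\widetilde X$ is invertible, and taking traces on both sides of $M_r|_V+\I_\mu I=\I_\mu\widetilde E\widetilde X^{-1}$ yields
$$-r\,\frac{d}{dr}\I_\mu(B(0,r))+k\,\I_\mu(B(0,r))=\I_\mu(B(0,r))\,\operatorname{tr}\bigl(\widetilde E\,\widetilde X^{-1}\bigr).$$

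To finish, I bound the error term by $|\operatorname{tr}(\widetilde E\widetilde X^{-1})|\le C_\tau C\bigl(\sum_i|x_i|^2\bigr)\|\widetilde X^{-1}\|/R$, where $\|\widetilde X^{-1}\|$ is controlled by the smallest singular value of $\widetilde X$ and hence by the geometry of $\{x_1,\dots,x_k\}$ alone. Division by $\I_\mu(B(0,r))>0$, positive because $0\in\supp(\mu)$ and $\varphi\equiv 1$ on $[0,1]$, delivers the claimed inequality uniformly in $r\in[R,2R]$. The main conceptual move is the trace identity, which converts the $k$ vector perturbation estimates coming from Mattila-Preiss into a single scalar estimate for the logarithmic derivative of $\I_\mu$; the principal technical point is the invertibility of $\widetilde X$, which comes for free from recognizing $V$ as the ambient subspace of $\mu$ and $\{x_i\}$ as a basis of it, at the cost of a constant depending on the tuple $(x_1,\dots,x_k)$.
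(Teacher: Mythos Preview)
Your proof is correct and essentially the same as the paper's: both apply the Mattila--Preiss formula to each $x_i$, exploit linearity of the resulting expression in $x$ to pass to an orthonormal basis of $V=\Span(x_1,\dots,x_k)$, and then sum the diagonal contributions to obtain $\sum_j\langle y,v_j\rangle^2/|y|=|y|$ and hence the logarithmic derivative of $\I_\mu$. The only cosmetic difference is bookkeeping---the paper performs Gram--Schmidt first and then sums, whereas you keep the $x_i$ until the end and invert the basis matrix $\widetilde X$ before taking the trace; these are equivalent, and in both cases the dependence on $x_1,\dots,x_k$ is absorbed into the constant.
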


\begin{proof}  Consider the orthonormal basis $v_1,\dots, v_k$ of $V= \text{span}(\supp(\mu))$ obtained via the Gram-Schmidt algorithm from $x_1,\dots, x_k$.
 By applying Lemma \ref{MPformula} to each element $x_j$, and using the triangle inequality, we infer that, for every $j=1,\dots,k$,
\begin{equation}\label{basissmall}\sup_{r\in [R,2R]}\Bigl|v_j+\frac{1}{\I_{\mu}(B(0,r))}\int_{\R^d}\frac{y}{r} \varphi'\Bigl(\frac{|y|}{r}\Bigl)\bigl\langle\frac{y}{|y|},v_j\bigl\rangle d\mu(y)\Bigl|\leq \frac{C(C_{\tau}, x_1,\dots, x_k)}{R}.\end{equation}
But now observe that
\begin{equation}\begin{split}\label{longsplit}\sup_{r\in [R,2R]}&\Bigl|k +\sum_{j=1}^k \frac{1}{\I_{\mu}(B(0,r))}\int_{\R^d}\frac{\langle y,v_j\rangle}{r} \varphi'\Bigl(\frac{|y|}{r}\Bigl)\bigl\langle\frac{y}{|y|},v_j\bigl\rangle d\mu(y)\Bigl|\\
&\leq \sum_{j=1}^k\sup_{r\in [R,2R]}\Bigl|v_j+\frac{1}{\I_{\mu}(B(0,r))}\int_{\R^d}\frac{y}{r} \varphi'\Bigl(\frac{|y|}{r}\Bigl)\bigl\langle\frac{y}{|y|},v_j\bigl\rangle d\mu(y)\Bigl|\\&\leq \frac{C(C_{\tau}, x_1,\dots, x_k)}{R}.\end{split}\end{equation}
Finally, notice that since $v_1,\dots ,v_k$ form an orthonormal basis of $V$, we have
\begin{equation}\begin{split}\nonumber\sum_{j=1}^k\int_{\R^d}\frac{\langle y,v_j\rangle}{r} \varphi'\Bigl(\frac{|y|}{r}\Bigl)\bigl\langle\frac{y}{|y|},v_j\bigl\rangle d\mu(y) & = \int_{\R^d}\frac{|y|}{r}\varphi'\Bigl(\frac{|y|}{r}\Bigl)d\mu(y)\\& = -r\frac{d}{dr}\mathcal{I}_{\mu}(B(0,r)),
\end{split}\end{equation}
and the lemma follows by inserting this identity into the left hand side of (\ref{longsplit}).
\end{proof}

\begin{lem}[Maximal Growth at Infinity]\label{maxgrowth}  Let $\mu$ be a $\varphi$-symmetric measure with $0\in \supp(\mu)$.  Let $V$ denote the linear span of $\supp(\mu)$, and $k=\text{dim}(V)$.  Then for any $\eps>0$,
$$\liminf_{R\rightarrow \infty}\frac{\I_{\mu}(B(0,R))}{R^{k-\eps}}=+\infty.
$$
\end{lem}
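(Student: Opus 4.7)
The plan is to combine the growth lemma (Lemma~\ref{growthlem}) at doubling scales with the defining inequality $F(\tau R) > C_{\tau} F(R)$ at non-doubling scales, where I write $F(r) := \I_{\mu}(B(0,r))$ and $H(r) := F(r)/r^{k-\eps}$; the goal is to show $H(r) \to +\infty$. The global constant $C_{\tau}$ is taken large enough that $C_{\tau}/\tau^{k-\eps} > 1$, which is admissible since only $C_{\tau} > \tau^d$ was imposed and $k \leq d$.

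The first case to dispatch is when every scale $R > R_*$ is non-doubling for some threshold $R_*$. Iterating $F(\tau R) > C_{\tau} F(R)$ produces $F(\tau^n R_*) > C_{\tau}^n F(R_*)$, and for any $r \in [\tau^n R_*, \tau^{n+1} R_*]$, the monotonicity of $F$ together with $r \leq \tau^{n+1} R_*$ yields
$$H(r) \geq \frac{C_{\tau}^{\,n} F(R_*)}{(\tau^{n+1} R_*)^{k-\eps}} = \frac{F(R_*)}{(\tau R_*)^{k-\eps}} \Bigl(\frac{C_{\tau}}{\tau^{k-\eps}}\Bigr)^{\!n} \longrightarrow +\infty,$$
and the lemma holds. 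So from now on I may assume doubling scales exist arbitrarily far out.

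In the main case, fix $R_0 > \max_j |x_j|$ large enough that the growth-lemma error $C'/R$ is at most $\eps/(2\log 2)$ for all $R > R_0$. Inductively construct $R_0 < R_1 < R_2 < \cdots$ as follows: if $R_n$ is a doubling scale, set $R_{n+1} := 2 R_n$ and integrate the pointwise bound $\tfrac{d}{dr}\log F(r) \geq (k - \eps/(2\log 2))/r$ over $[R_n, 2R_n]$ to obtain $F(R_{n+1})/F(R_n) \geq 2^{\,k - \eps/(2\log 2)}$; if $R_n$ is not doubling, set $R_{n+1} := \tau R_n$, directly yielding $F(R_{n+1})/F(R_n) > C_{\tau}$. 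In either case a short computation gives $H(R_{n+1})/H(R_n) \geq \beta$ for a fixed $\beta > 1$ depending only on $\eps, \tau, C_{\tau}$, so $H(R_n) \geq \beta^n H(R_0) \to +\infty$ (using that $F(R_0) > 0$ since $0 \in \supp(\mu)$).

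To upgrade this convergence along $(R_n)$ into a statement about $\liminf$: for any $r \geq R_0$, locate the unique $n$ with $r \in [R_n, R_{n+1}]$, and use $F(r) \geq F(R_n)$ together with $r \leq \tau R_n$ to deduce $H(r) \geq H(R_n)/\tau^{k-\eps}$. Since $H(R_n) \to +\infty$ and $\tau^{k-\eps}$ is a fixed constant, this gives $\liminf_{r \to \infty} H(r) = +\infty$, as required. The main subtlety to check is that the growth-lemma error term, which by Lemma~\ref{growthlem} depends on $C_\tau$ and the chosen linearly independent family, is uniformly dominated by $\eps$ for all sufficiently large doubling scales; this is immediate once the family is fixed, since the bound is $O(1/R)$.
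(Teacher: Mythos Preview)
Your proof is correct and follows essentially the same approach as the paper's: both arguments build an increasing sequence of scales $R_n$ by choosing $R_{n+1}=2R_n$ (and integrating the differential inequality from the growth lemma) when $R_n$ is doubling, and $R_{n+1}=\tau R_n$ (using the defining inequality of non-doubling) otherwise, then interpolate between consecutive $R_n$ to control $H(r)$ for all large $r$. Two minor remarks: your initial ``all scales non-doubling past $R_*$'' case is redundant, since your inductive construction already covers it; and the threshold $\eps/(2\log 2)$ is a slightly unusual choice (the paper uses $\eps/2$, giving the cleaner bound $F(2R)/F(R)\geq 2^{\,k-\eps/2}$), though your choice works since $1-\tfrac{1}{2\log 2}>0$.
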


\begin{proof}
From Lemma \ref{growthlem}, we may fix $R_0>0$ such that if $R\geq R_0$ is a doubling scale, then
\begin{equation}\label{smallRlarge}\sup_{r\in [R,2R]}\Bigl|k -r\frac{\tfrac{d}{dr}\mathcal{I}_{\mu}(B(0,r))}{\I_{\mu}(B(0,r))}\Bigl| \leq \frac{\eps}{2},\end{equation}
and so
$$ \frac{\tfrac{d}{dr}\mathcal{I}_{\mu}(B(0,r))}{\I_{\mu}(B(0,r))}\geq \frac{k-\frac{\eps}{2}}{r} \text{ for every }r\in [R, 2R].
$$
Integrating this inequality between $R$ and $2R$ yields that
$$\I_{\mu}(B(0,2R))\geq 2^{k-\frac{\eps}{2}}\I_{\mu}(B(0,R)).$$

We therefore infer the following alternative for \emph{any} $R\geq R_0$:  Either $R$ is a non-doubling radius, in which case, since $C_{\tau}\geq \tau^d$,
$$\I_{\mu}(B(0,\tau R))\geq C_{\tau}\I_{\mu}(B(0,R))\geq \tau^{k-\frac{\eps}{2}}\I_{\mu}(B(0,R)),
$$
or, $R$ is a doubling radius, in which case
$$\I_{\mu}(B(0,2R))\geq 2^{k-\frac{\eps}{2}}\I_{\mu}(B(0,R)).$$

Starting with $R_0$, we repeatedly apply the alternative to obtain a sequence of radii $R_j\to\infty$ with $R_{j}$ equal to either $2R_{j-1}$ or $\tau R_{j-1}$, such that $$\I_{\mu}(B(0,R_{j}))\geq \Bigl(\frac{R_{j}}{R_0}\Bigl)^{k-\frac{\eps}{2}}\I_{\mu}(B(0,R_0)).$$

Finally notice that for any $R\geq R_0$, there exists some $R_j$ with $\tfrac{R}{\tau}\leq R_j\leq R$, so
\begin{equation}\begin{split}\I_{\mu}(B(0,R))&\geq \I_{\mu}(B(0,R_j))\geq \Bigl(\frac{R_{j}}{R_0}\Bigl)^{k-\frac{\eps}{2}}\I_{\mu}(B(0,R_0))\\&\geq c\Bigl(\frac{R}{R_0}\Bigl)^{k-\frac{\eps}{2}}\I_{\mu}(B(0,R_0)).
\end{split}\end{equation}
The lemma is proved.\end{proof}

We shall need one additional corollary of the Mattila-Preiss formula.  It is a direct analogue for symmetric measures of an influential result of  Preiss (see Proposition 6.19 in \cite{DeLe}), which states that if a uniform measure is sufficiently flat at arbitrarily large scales (has small enough coefficient $\beta_{\mu,n}(Q)$ for all cubes $Q$ of sufficiently large side-length), then the measure is flat (supported in an $n$-plane).

 In the case of symmetric measures, this statement is much easier to achieve than for uniform measures due to the strength of the Mattila-Preiss formula\footnote{It is not true, though, that every symmetric measure is a uniform measure.}.  We give the statement in the contrapositive form as it will be convenient for our purposes.

\begin{lem}[Propagation of non-flatness to infinity]\label{propagation}  Let $\mu$ be a $\varphi$-symmetric measure with $0\in \supp(\mu)$.  Suppose that $\supp(\mu)$ is not contained in an $n$-plane.  There exists $R_{\mu}> 0$ such that if $R\geq R_{\mu}$ is a doubling scale, then
\begin{equation}\label{bigbetasym}\frac{1}{\I_{\mu}(B(0,R))}\inf_{L\in \mathcal{P}_n}\int_{\R^d}\Bigl(\frac{\dist(x,L)}{R}\Bigl)^2 \varphi\Bigl(\frac{|x|}{2R}\Bigl)d\mu(x)>\frac{1}{4C_{\tau}\|\varphi'\|_{\infty}^2}.
\end{equation}
\end{lem}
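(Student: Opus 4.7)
The plan is to argue by contradiction, exploiting the Mattila--Preiss formula against a unit vector that lives in $V\cap L_R^\perp$, where $V:=\mathrm{span}(\supp(\mu))$ has dimension $k\ge n+1$ by hypothesis.

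Suppose the conclusion fails, so that for some doubling scale $R$ (which we will eventually require to be large) there is a minimizing $n$-plane $L_R$ with
$$\beta_R^2:=\frac{1}{\I_\mu(B(0,R))}\int\Bigl(\frac{\dist(x,L_R)}{R}\Bigr)^2\varphi\Bigl(\frac{|x|}{2R}\Bigr)d\mu\le \frac{1}{4C_\tau\|\varphi'\|_\infty^2}.$$
Since $0\in\supp(\mu)$ and $\mu$ is $\varphi$-symmetric, $\int y\,\varphi(|y|/(2R))d\mu=0$, so Lemma \ref{leastsquares} lets us take $L_R$ to be a linear subspace. Since $\dim(V\cap L_R^\perp)\ge k-n\ge 1$, pick a unit vector $v_R\in V\cap L_R^\perp$. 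Fix, once and for all, a basis $x_1,\ldots,x_k\in \supp(\mu)$ of $V$; the expansion $v_R=\sum_i \alpha_i^{(R)} x_i$ then has $|\alpha_i^{(R)}|\le C_1(x_1,\ldots,x_k)$ uniformly in $R$.

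Apply Lemma \ref{MPformula} with $r=R$ to each $x_i$ to get $|x_i-M_R x_i|\le CC_\tau |x_i|^2/R$, where
$$M_R x := \frac{1}{\I_\mu(B(0,R))}\int\frac{y\,\langle y,x\rangle}{R|y|}\Bigl(-\varphi'\bigl(|y|/R\bigr)\Bigr)d\mu(y).$$
By linearity, $|v_R-M_R v_R|\le C_2/R$ for a constant $C_2=C_2(x_1,\ldots,x_k,C_\tau)$. The crux is the bound on $P_{L_R}^\perp M_R v_R$: using $|\langle y, v_R\rangle|\le |y|$, $|P_{L_R}^\perp y|=\dist(y,L_R)$, Cauchy--Schwarz, $\chi_{|y|<2R}\le\varphi(|y|/(2R))$, the definition of $\beta_R$, and the doubling bound $\mu(B(0,2R))\le C_\tau \I_\mu(B(0,R))$ (which holds since $\tau=1000\sqrt{d}>2$), one obtains
$$|P_{L_R}^\perp M_R v_R|\le \|\varphi'\|_\infty\sqrt{C_\tau}\,\beta_R\le \tfrac{1}{2}.$$

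Since $v_R\in L_R^\perp$ gives $P_{L_R}^\perp v_R=v_R$, combining gives $1=|v_R|\le |v_R-P_{L_R}^\perp M_R v_R|+|P_{L_R}^\perp M_R v_R|\le C_2/R+\tfrac{1}{2}$, which is a contradiction as soon as $R>2C_2$. Thus the lemma holds with any $R_\mu>\max(2C_2,\max_i|x_i|)$. The main obstacle will be the precise matching of constants producing the factor $\tfrac{1}{2}$: the proof has no slack, so the interplay between the $\beta$-cutoff $\varphi(|y|/(2R))$, the $\I_\mu$-cutoff $\varphi(|y|/R)$, the support of $\varphi'$ in $[1,2]$, and the doubling inflation $C_\tau$ has to be tracked carefully.
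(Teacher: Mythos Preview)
Your proof is correct and follows essentially the same route as the paper: linearize the optimal plane via Lemma~\ref{leastsquares}, produce a unit vector in $V$ orthogonal to it, feed that vector through the Mattila--Preiss formula by expanding in a fixed basis of $\supp(\mu)$, and close with Cauchy--Schwarz against the doubling bound. The only cosmetic differences are that the paper argues directly rather than by contradiction, and it projects $L$ onto $V$ first (taking $x\perp L_V$ and using $|\langle y,x/|x|\rangle|\le\dist(y,L_V)$) whereas you project $M_R v_R$ onto $L_R^\perp$ and use the cruder $|\langle y,v_R\rangle|\le|y|$; the two bookkeeping choices lead to the same Cauchy--Schwarz estimate and the same constant $\tfrac{1}{4C_\tau\|\varphi'\|_\infty^2}$.
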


\begin{proof}[Proof of Lemma \ref{propagation}]  Since $0$ is the centre of mass of the measure $\varphi\bigl(\tfrac{\cdot}{2R}\bigl)d\mu$ ($\mu$ is symmetric and $0\in \supp(\mu)$), we infer from Lemma \ref{leastsquares} that it suffices to only consider linear subspaces $L$ in the infimum appearing on the left hand side of (\ref{bigbetasym}).


Set $V=\Span(\supp(\mu))$.  Then $V$ has dimension $k>n$ by the assumption of the lemma.  Notice that if $L$ is an $n$-dimensional linear subspace, then we have for every $y\in V$,
$$\dist(y,L)\geq \dist(y, L_V),
$$
where $L_V$ denotes the orthogonal projection of $L$ onto $V$.

Let $v_1,\dots, v_k$ be an orthonormal basis of $V$.  Then, using Lemma \ref{MPformula} in precisely the same manner as in the first paragraph of the proof of Lemma \ref{growthlem}, we find $R_{\mu}>0$ large enough so that for each $j=1,\dots, k$, and any doubling scale $R\geq R_{\mu}$,
\begin{equation}\label{MPlarge0} \Bigl|v_j+\frac{1}{R \I_{\mu}(B(0,R))}\int_{\R^d}\frac{y}{|y|}\varphi'\Bigl(\frac{|y|}{R}\Bigl)\bigl\langle y,v_j\bigl\rangle d\mu(y)\Bigl|<\frac{1}{2k}.
\end{equation}
 We can find a non-zero vector $x=\sum_{j=1}^k d_jv_j$ so that $x\perp L_V$.  Of course, $|x|^2=\sum_{j=1}^k |d_j|^2$ and so $|d_j|\leq |x|$ for every $j$.  Thus,
\begin{equation}\begin{split}\nonumber\Bigl|x+& \frac{1}{R\I_{\mu}(B(0,R))}\int_{\R^d}\frac{y}{|y|}\varphi'\Bigl(\frac{|y|}{R}\Bigl)\langle y,x\rangle d\mu(y)\Bigl|\\&\leq \sum_{j=1}^k |d_j| \Bigl|v_j+\frac{1}{R\I_{\mu}(B(0,R))}\int_{\R^d}\frac{y}{|y|}\varphi'\Bigl(\frac{|y|}{R}\Bigl)\langle y,v_j\rangle d\mu(y)\Bigl|\\
&\leq \sum_{j=1}^k |d_j|\frac{1}{2k}\leq \frac{|x|}{2}.
\end{split}\end{equation}

Consequently, we see that for any doubling scale $R> R_0$,
\begin{equation}\label{MPlarge}\frac{1}{2} < \Bigl|\frac{1}{\I_{\mu}(B(0,R))}\int_{B(0,2R)}\frac{y}{|y|}\varphi'\Bigl(\frac{|y|}{R}\Bigl)\frac{\bigl\langle y,\frac{x}{|x|}\bigl\rangle}{R} d\mu(y)\Bigl|,
\end{equation}
(here we have just used that $\varphi$ is supported in $B(0, 2R)$).  Now notice that, since $x\perp L_V$,  $|\bigl\langle y,\frac{x}{|x|}\bigl\rangle| \leq \dist(y,L_V)$.  Therefore, applying the Cauchy-Schwarz inequality to the right hand side of (\ref{MPlarge}), we get that
$$\frac{1}{2}\leq \|\varphi'\|_{\infty}\frac{\sqrt{\mu(B(0, 2R))}}{\I_{\mu}(B(0,R))}\Bigl(\int_{B(0, 2R)}\Bigl(\frac{\dist(y,L_V)}{R}\Bigl)^2d\mu(y)\Bigl)^{1/2}.
$$
The lemma now follows from the facts that $R$ is a doubling radius, and $\varphi\bigl(\frac{|y|}{2R}\bigl)=1$ for $y\in B(0, 2R)$.
\end{proof}

\subsection{Flat $\varphi$-symmetric measures}

We now look at the behaviour of a $\varphi$-symmetric measure that is supported in an $n$-plane.   Suppose that $\mu$ is a $\varphi$-symmetric measure with power growth (i.e. satisfies (\ref{polygrowthdef}) for some $\lambda>0$). Then, of course,
$$\int_{\R^d}(x-y)\Bigl[\varphi\Bigl(\frac{|x-y|}{t}\Bigl)-\varphi\Bigl(\frac{2|x-y|}{t}\Bigl)\Bigl]d\mu(y) = 0 \text{ for }x\in \supp(\mu),\, t>0.
$$
Notice that the function $t\mapsto \bigl[\varphi\bigl(\frac{1}{t}\bigl)-\varphi\bigl(\frac{2}{t}\bigl)\bigl]$ is supported in $[1/2, 2]$.  Consequently, if we take any bounded function $g:(0,\infty)\to\R$ that decays faster than any power at infinity, then for  $x\in \supp(\mu)$,
\begin{equation}\begin{split}\nonumber0&=\int_0^{\infty}g(t)\int_{\R^d}(x-y)\Bigl[\varphi\Bigl(\frac{|x-y|}{t}\Bigl)-\varphi\Bigl(\frac{2|x-y|}{t}\Bigl)\Bigl]d\mu(y)\frac{dt}{t}\\
& = \int_{\R^d}(x-y)\int_0^{\infty}g(|x-y|t)\Bigl[\varphi\Bigl(\frac{1}{t}\Bigl)-\varphi\Bigl(\frac{2}{t}\Bigl)\Bigl]\frac{dt}{t}d\mu(y).
\end{split}\end{equation}
  We shall use this idea to show that the support of a $\varphi$-symmetric measure is contained in the zero set of a real analytic function.  As usual, this idea goes back to Mattila \cite{Mat2}.

\begin{lem}\label{flatalt}  Suppose that $\mu$ is a $\varphi$-symmetric measure with power growth, and $\supp(\mu)\subset L$ for some $n$-plane $L$.  Then either $\mu = c_L\mathcal{H}^n|L$ for some $c_L>0$, or $\supp(\mu)$ is $(n-1)$-rectifiable.
\end{lem}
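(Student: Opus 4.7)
The plan is to implement the strategy outlined just above the lemma: use an averaging trick to produce a real analytic vector field $G:L\to L$ that vanishes on $\supp(\mu)$, and dichotomize on whether $G$ is identically zero on $L$. First, translate so that $L$ is a linear subspace of $\R^d$ and identify $L$ with $\R^n$, viewing $\mu$ as a tempered Borel measure on $\R^n$ (temperedness comes from the power growth hypothesis).

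Choosing $g(u)=e^{-u^2/2}$ in the averaging identity derived just above the lemma, the radial profile
$$F(r)=\int_0^\infty e^{-r^2 t^2/2}[\varphi(1/t)-\varphi(2/t)]\,\frac{dt}{t}$$
extends to an entire function of $r^2$, because the bracketed factor is supported in $t\in[1/2,2]$ and the Gaussian is entire in $r^2$. Consequently
$$G(x) = \int_{\R^n}(x-y)\,F(|x-y|)\,d\mu(y)$$
is a real analytic map $\R^n\to\R^n$, and the averaging identity shows $G(x)=0$ for every $x\in\supp(\mu)$.

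If some component $G_i$ of $G$ is not identically zero on $\R^n$, then $\supp(\mu)$ is contained in the zero set of $G_i$, a proper real analytic subvariety of $\R^n$. Such a variety admits a stratification by smooth submanifolds of dimensions at most $n-1$, and is therefore $(n-1)$-rectifiable, which gives the second alternative of the lemma. If instead $G\equiv 0$ on $\R^n$, then $K*\mu=0$ as tempered distributions, where $K(z)=zF(|z|)$. A direct computation gives $\widehat{K}(\xi)=2i\xi\,H'(|\xi|^2)$ for an explicit radial function $H$, and the strict positivity of $\varphi(1/t)-\varphi(2/t)$ near $t=1$ forces $H'(r)<0$ for every $r\geq 0$. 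Hence $\xi\,\hat\mu(\xi)=0$ as distributions, so $\hat\mu$ is supported at the origin and $\mu=P(x)\,dx$ for some non-negative polynomial $P$.

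It remains to verify that $P$ is constant. If $P\not\equiv 0$, then $\supp(\mu)=\R^n$, so the $\varphi$-symmetry holds at every $x\in\R^n$; substituting $y=x-z$ and Taylor-expanding $P(x-z)$, the identity $\int z\,\varphi(|z|/t)P(x-z)\,dz=0$ becomes a polynomial in $t$ whose coefficients must individually vanish. The radial symmetry of $\varphi$ kills the $t^{n+1}$-coefficient (odd integrand), while the $t^{n+2}$-coefficient equals a positive multiple of $\nabla P(x)$, forcing $\nabla P\equiv 0$. Hence $P$ is constant and $\mu=c_L\mathcal{H}^n|L$. The main obstacle is the second alternative: simultaneously ensuring real analyticity of $G$ and nonvanishing of $\widehat{K}$ away from the origin. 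The Gaussian choice $g(u)=e^{-u^2/2}$ achieves both, by turning the compactly supported bump $\varphi$ into a strictly monotone Fourier radial profile.
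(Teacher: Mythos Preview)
Your proof is correct and follows essentially the same route as the paper's: both use the Gaussian choice $g(u)=e^{-cu^2}$ in the multiplicative-convolution averaging to produce an entire (hence real-analytic) vector field $G=K*\mu$ vanishing on $\supp(\mu)$, then split on whether $G\equiv 0$, using the Fourier computation $\widehat K(\xi)=c\,\xi\cdot(\text{strictly positive radial factor})$ to force $\supp(\widehat\mu)\subset\{0\}$ in the degenerate case. The only differences are cosmetic: for the $(n-1)$-rectifiability you invoke stratification of real-analytic varieties, whereas the paper writes out the implicit-function-theorem decomposition $\{u=0\}\cap\{D^\alpha u\neq 0,\ D^\beta u=0\ \forall\,\beta<\alpha\}$ explicitly; and to show the polynomial density $P$ is constant you Taylor-expand the $\varphi$-symmetry identity directly in $t$ and read off $\nabla P\equiv 0$ from the $t^{n+2}$ coefficient, whereas the paper differentiates the convolution identity down to an affine polynomial and reaches a contradiction from $\int\langle a,y\rangle^2 w(y)\,dy=0$. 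Your version of this last step is arguably cleaner.
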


\begin{proof} After applying a suitable affine transformation, we may assume that $0\in \supp(\mu)$ and $L=\R^n\times \{0\}$, $0\in \R^{d-n}$.  

For $z\in \mathbb{C}^n$, set
$$w(z) = \int_0^{\infty}e^{-\pi z^2t^2}\Bigl[\varphi\Bigl(\frac{1}{t}\Bigl)-\varphi\Bigl(\frac{2}{t}\Bigl)\Bigl]\frac{dt}{t},
$$
where $z^2 = z_1^2+\dots+z_n^2$. Since the domain of integration may be restricted to $[1/2,2]$, we see that $w$ is an entire function on $\mathbb{C}^n$.  Consider the function $v:\mathbb{C}^n\to \mathbb{C}^n$ given by $v(z) = z w(z)$.  Then $v$ is an entire vector field.
 Notice that, with $\wh{v}(\xi) = \int_{\R^n}v(x)e^{-2\pi i \langle x,\xi\rangle}dm_n(\xi)$, $\xi\in \R^n$,  the Fourier transform of $v$ in $\R^n$, we have
\begin{equation}\begin{split}\wh{v}(\xi) &= c\nabla \wh{w}(\xi) = c\nabla \int_0^{\infty}\frac{1}{t^{n}}e^{-\pi|\xi|^2/t^2}\Bigl[\varphi\Bigl(\frac{1}{t}\Bigl)-\varphi\Bigl(\frac{2}{t}\Bigl)\Bigl]\frac{dt}{t}\\& = c\xi\int_0^{\infty}\frac{1}{t^{n+2}}e^{-\pi|\xi|^2/t^2}\Bigl[\varphi\Bigl(\frac{1}{t}\Bigl)-\varphi\Bigl(\frac{2}{t}\Bigl)\Bigl]\frac{dt}{t}.
\end{split}\end{equation}
The only thing we need from this formula is that $\wh v$ is only zero when $\xi=0$.

Since $\mu$ has power growth, and the entire function $v$ satisfies a straightforward decay estimate  $|v(x+iy)|\leq (1+|y|)e^{\pi|y|^2}(1+|x|)e^{-\pi|x|^2}$ for $x, y \in \R^n$, we infer that the function
$$u(x) = \int_{\R^n}v(x-y)d\mu(y), \, x\in \R^n,
$$
is a real analytic function on $\R^n$, and $\supp(\mu)\subset u^{-1}(\{0\})$.  First suppose that $u$ is identically zero in $\R^n$.   Then since $\mu$ is a tempered distribution\footnote{The power growth assumption is again used here.}, we have that
$$\wh{u} = c\wh v \cdot \wh\mu \equiv 0.
$$
Since $\wh v$ is only zero at the origin, we see that $\supp(\wh\mu) \subset \{0\}$.  This can only happen if $\mu$ has a polynomial density with respect to $m_n$, $\mu =Pm_n$. Since the function $\int_{\R^n}P(\,\cdot\,-y)v(y)dm_n(y)$ is identically zero on $\R^n$, we have that $\int_{\R^n}D^{\alpha}P(x-y)v(y)dm_n(y)= 0$ for any $x\in \R^n$ and multi-index $\alpha=(\alpha_1,\dots,\alpha_n)$.  But then if the polynomial $P$ is non-constant, we can, with a suitable differentiation, find a non-constant affine polynomial $\langle a, x\rangle+b$, $a\in \R^n$ and $b\in \R$, such that
$$\int_{\R^n} [\langle a,(x-y)\rangle+b]v(y)dm_n(y) =0 \text{ for every }x\in \R^n.
$$
Since $\int_{\R^n}v(y)dm_n(y) = \int_{\R^n}y w(y)dm_n(y)=0$, evaluating this expression at $x=0$, and taking the scalar product with $a$ yields
$$\int_{\R^n} \langle a, y\rangle ^2w(y)dm_n(y)=0,
$$
which is preposterous.  Consequently, $\mu$ is equal to a constant multiple of the Lebesgue measure $m_n$.


If $u$ is not identically zero, then since $u$ is analytic,
$$\R^n = \bigcup_{\alpha \text{ multi-index}}\{x\in \R^n: D^{\alpha}u(x)\neq 0\},
$$
and therefore
\begin{equation}\begin{split}\nonumber\supp(\mu) & \subset u^{-1}(\{0\})\cap\bigcup_{\alpha \text{ multi-index}}\{x\in \R^n: D^{\alpha}u(x)\neq 0\}\\
& =\!\!\!\!\! \bigcup_{\alpha \text{ multi-index}}\!\!\!\bigl\{x\in \R^n: D^{\alpha}u(x)\neq 0,\, D^{\beta}u(x)=0 \text{ for every }\beta<\alpha\bigl\}.
\end{split}\end{equation}
The implicit function theorem ensures that each set in the union on the right hand side is locally contained in a smooth $(n-1)$-surface.
\end{proof}

\subsection{A structure theorem}\label{symsum}

Here we summarize the results of this section in a form useful for what follows.

\begin{prop}\label{structureprop}Suppose that $\mu$ is a $\varphi$-symmetric measure satisfying $B_{Q_0}\cap \supp(\mu)\neq \varnothing$, and such that
$$\limsup_{R\to\infty}\frac{\mu(B(0,R))}{R^{\lambda}}<\infty \text{ for some }\lambda>0.
$$
Then
\begin{enumerate}
\item  If $\supp(\mu)$ is not contained in any $n$-plane, then

-- for any $\eps>0$ and every $T>1$, there exists $\ell>0$ such that if $Q$ is a cube satisfying $\ell(Q)\geq \ell$ and $\tfrac{1}{2}B_Q\supset B_{Q_0}$, then $D_{\mu,n+1-\eps}(Q)>T$.

-- there exists a constant $c^{\star}>0$, depending on $s$, $d$, $\lambda$, and $\|\varphi'\|_{\infty}$, such that whenever $\dy$ is a dyadic lattice and $\ell>0$, there exists $Q'\in \dy$ with $\ell(Q')\geq \ell$ satisfying $\frac{1}{2}B_{Q'}\supset B_{Q_0}$ and $$\beta_{\mu,n}(Q')\geq c^{\star}.$$
\item If $\supp(\mu)\subset L$ for some $n$-plane $L$, then either $\mu = c\mathcal{H}^n|L$ or $\supp(\mu)$ is $(n-1)$-rectifiable.
\end{enumerate}
\end{prop}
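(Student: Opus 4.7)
My plan is to assemble the proposition from the three preceding results of this section: the Maximal Growth Lemma~\ref{maxgrowth}, the Propagation of Non-Flatness Lemma~\ref{propagation}, and the Flat Alternative Lemma~\ref{flatalt}. Throughout, I fix a point $x_0\in B_{Q_0}\cap \supp(\mu)$. Those three lemmas are stated with a support point at the origin, but by translation invariance they apply verbatim with $x_0$ in that role. Part~(2) of the proposition is then immediate from Lemma~\ref{flatalt}, since $\mu$ has power growth by hypothesis.

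For Part~(1), the failure of $\supp(\mu)$ to lie in any $n$-plane forces $\dim\Span(\supp(\mu)-x_0)\geq n+1$, and Lemma~\ref{maxgrowth} yields $\liminf_{R\to\infty}\I_\mu(B(x_0,R))/R^{n+1-\eps/2}=+\infty$. The density claim follows at once: if $\tfrac12 B_Q\supset B_{Q_0}$, then $x_0\in \tfrac12 B_Q$, so $|x_Q-x_0|\leq 2\sqrt{d}\,\ell(Q)$; by monotonicity of $\varphi$ one obtains $\varphi_Q\geq c_0 \chi_{B(x_0,c\ell(Q))}$ for absolute constants $c,c_0>0$, whence $\I_\mu(Q)\geq c_0\,\I_\mu(B(x_0,c\ell(Q)/2))$. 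Dividing by $\ell(Q)^{n+1-\eps}$ and invoking Lemma~\ref{maxgrowth} makes $D_{\mu,n+1-\eps}(Q)$ arbitrarily large once $\ell(Q)$ is sufficiently large.

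The $\beta$-number claim is the most delicate piece. Lemma~\ref{doubexistence} provides arbitrarily large doubling scales around $x_0$. Given $\dy$ and $\ell>0$, fix a doubling scale $R\geq \max(R_\mu,\ell)$ (with $R_\mu$ as in Lemma~\ref{propagation}) and choose $Q'\in \dy$ containing $x_0$ with $\ell(Q')$ in a bounded window $[CR,C'R]$, where $C=8/(3\sqrt{d})$ and $C'=200$; such a dyadic cube exists because the window contains a dyadic scale. For any $n$-plane $L$ one verifies $\varphi_{Q'}(x)\geq \varphi(|x-x_0|/2R)$ pointwise: outside $B(x_0,4R)$ the right side vanishes, while inside $B(x_0,4R)$ the lower bound $\ell(Q')\geq CR$ forces $|x-x_{Q'}|\leq 4R+\tfrac{\sqrt d}{2}\ell(Q')\leq 2\sqrt{d}\ell(Q')$, so $\varphi_{Q'}(x)=1$. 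Using $\ell(Q')\leq C'R$ to compare denominators, and then applying Lemma~\ref{propagation}, produces
\[
\beta_{\mu,n}(Q')^2\,\I_\mu(Q')\geq \frac{c\,\I_\mu(B(x_0,R))}{C_\tau\|\varphi'\|_\infty^2}.
\]
On the other hand, $\I_\mu(Q')\leq \I_\mu(B(x_0,\tau R))\leq C_\tau \I_\mu(B(x_0,R))$ follows from $\ell(Q')\leq C'R$ and the doubling of $R$. Dividing produces $\beta_{\mu,n}(Q')\geq c^{\star}$ with $c^{\star}$ depending only on $s,d,\lambda,\|\varphi'\|_\infty$.

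The only real obstacle is this scale-matching in the $\beta$-number part: one must tune the window for $\ell(Q')$ so that $\varphi_{Q'}$ dominates the bump used by Lemma~\ref{propagation} while the doubling at scale $R$ still controls $\I_\mu(Q')$ from above. Everything else reduces to bookkeeping of absolute constants.
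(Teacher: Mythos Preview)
Your proof is correct and follows the same approach as the paper: Part~(2) is Lemma~\ref{flatalt}, the density statement comes from Lemma~\ref{maxgrowth} plus a ball-containment argument, and the $\beta$-number statement combines Lemma~\ref{doubexistence} with Lemma~\ref{propagation} after choosing a dyadic cube at a scale comparable to a large doubling radius. The only point you leave implicit is the verification that $\tfrac{1}{2}B_{Q'}\supset B_{Q_0}$ and $\ell(Q')\geq \ell$; both follow once you take $R$ at least a fixed absolute constant (and at least $\ell/C$ when $C<1$), which Lemma~\ref{doubexistence} certainly permits, so this is indeed the bookkeeping you allude to.
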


 \begin{proof} First assume that $\supp(\mu)$ is not contained in any $n$-plane.  Fix some point $x_0\in \supp(\mu)\cap Q_0$.  To prove the first property listed in item (1), observe that from Lemma \ref{maxgrowth} applied to the $\varphi$-symmetric measure $\mu_{x_0}= \mu(\, \cdot\, +x_0)$ it follows that $\lim_{R\to\infty}\frac{\I_{\mu}(B(x_0, R))}{R^{n+1-\eps}}=\infty$.  But if $\tfrac{1}{2}B_Q\supset B_{Q_0}$, then $B(x_Q, 2\sqrt{d}\ell(Q))$ contains a ball $B(x_0, R)$ with $R$ comparable to $\ell(Q)$. Then $D_{\mu,n+1-\eps}(Q)\geq c \frac{\I_{\mu}(B(x_0, R))}{R^{n+1-\eps}}$, and the first statement listed in item (1) follows.  

 To derive the second property listed in item (1), apply Lemma \ref{doubexistence} to the $\varphi$-symmetric measure $\mu_{x_0}= \mu(\, \cdot\, +x_0)$ to infer that, provided $C_{\tau}>\tau^{\lambda}$ (we fix $C_{\tau}$ to be of this order of magnitude), the measure $\mu_{x_0}$ has a sequence of doubling scales $R_j$ with $R_j\to\infty$.  Lemma \ref{propagation} yields that if $j$ is large enough, then $$\frac{1}{\I_{\mu}(B(x_0,R_j))}\inf_{L\in \mathcal{P}_n}\int_{B(x_0, 4R_j)}\Bigl(\frac{\dist(x,L)}{R_j}\Bigl)^2\varphi\Bigl(\frac{|x-x_0|}{2R_j}\Bigl)d\mu(x) \geq c,$$ for some constant $c>0$ depending on $s$, $d$, $\lambda$, and $\|\varphi'\|_{\infty}$.

 Now, for any given lattice $\dy$, choose a cube $Q$ intersecting $B(x_0, R_j)$ of side-length between $4R_j$ and $8R_j$,  For large enough $j$, we certainly have that $\frac{1}{2}B_Q\supset B_{Q_0}$.  Also notice that $B(x_0, 4R_j)\subset B(x_Q, 2\sqrt{d}\ell(Q))\subset \{\varphi_{Q}=1\}\subset \supp(\varphi_Q)\subset B(x_Q, 4\sqrt{d}\ell(Q))\subset B(x_0, \tau R_j)$.  Consequently, for any $n$-plane $L$,
\begin{equation}\begin{split}\nonumber\int_{B(x_0, 4R_j)}&\Bigl(\frac{\dist(x,L)}{R_j}\Bigl)^2\varphi\Bigl(\frac{|x-x_0|}{2R_j}\Bigl)d\mu(x)\\&\leq C \int_{B_Q}\Bigl(\frac{\dist(x,L)}{\ell(Q)}\Bigl)^2\varphi_Q(x)d\mu(x),
 \end{split}\end{equation}
 while $\mathcal{I}_{\mu}(Q)\leq C_{\tau}\I_{\mu}(B(x_0, R_j))$.  Bringing these observations together proves the second statement listed in item (1).



Item (2) is merely a restatement of Lemma \ref{flatalt}.\end{proof}

\section{The rudiments of weak convergence}

We say that a sequence of measures $\mu_k$ converges weakly to a measure $\mu$, written $\mu_k \wlim \mu$, if
$$\lim_{k\to\infty}\int_{\R^d}fd\mu_k = \int_{\R^d}fd\mu,
$$
for every $f\in C_0(\R^d)$ (the space of continuous functions on $\R^d$ with compact support). 

\subsection{A general convergence result}

Our first result is a simple convergence lemma that we shall use in blow-up arguments.

\begin{lem} \label{stupidlemma} Suppose that $\nu_k\wlim\nu$.  Fix $\psi\in \Lip(\R^d\times \R^d)$, and a sequence of functions $\psi_k\in \Lip(\R^d\times\R^d)$ such that
\begin{itemize}
\item $\psi_k$ converge uniformly to $\psi$,
\item there exists $R>0$ such that $\supp(\psi_k(x,\cdot))\subset B(x,R)$ for every $x\in \R^d$ and $k\in \mathbb{N}$, and
\item $\sup_k\|\psi_k\|_{\Lip}<\infty$.
\end{itemize}
Then, for any bounded open set $U\subset \R^d$, 
\begin{equation}\begin{split}\nonumber\liminf_{k\to\infty}&\int_{U}\Bigl|\int_{\R^d}\psi_k(x,y)d\nu_k(y)\Bigl|^2d\nu_k(x)\\&\geq \int_{U}\Bigl|\int_{\R^d}\psi(x,y)d\nu(y)\Bigl|^2d\nu(x)
\end{split}\end{equation}
\end{lem}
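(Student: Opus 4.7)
The plan is to establish three ingredients in succession: (a) that $F_k(x) := \int \psi_k(x,y)d\nu_k(y)$ converges to $F(x) := \int \psi(x,y)d\nu(y)$ uniformly on compact sets, (b) a pointwise lower bound that compares $|F_k|^2$ to $|F|^2$ with an error controlled by $\|F_k-F\|_\infty$, and (c) the standard Portmanteau-type lower semicontinuity for integrals of non-negative continuous functions against open sets under weak convergence.

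First, I would observe that since $\nu_k \wlim \nu$, testing against a non-negative bump that equals $1$ on a given ball shows that $\sup_k \nu_k(B) < \infty$ for every bounded Borel $B$. Combining this with $\sup_k \|\psi_k\|_{\Lip} < \infty$ and the fact that each $\psi_k(x,\cdot)$ is supported in $B(x,R)$, I get that the $F_k$ are uniformly bounded and uniformly Lipschitz on any compact set (the same applies to $F$). Writing
\[
F_k(x)-F(x) = \int(\psi_k-\psi)(x,y)\,d\nu_k(y) + \int \psi(x,y)\,d(\nu_k-\nu)(y),
\]
the first term is bounded by $\|\psi_k-\psi\|_\infty \cdot \sup_k \nu_k(B(x,R))$, which goes to $0$ uniformly on any compact set $K$. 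For the second term, the family $\{\psi(x,\cdot) : x \in K\}$ is equi-Lipschitz and uniformly compactly supported, so by Arzel\`a–Ascoli it is relatively compact in $C_c(\R^d)$ equipped with the uniform norm; a finite $\epsilon$-net argument, together with the weak convergence $\nu_k \wlim \nu$ applied to each net element, then yields convergence uniformly in $x \in K$. This establishes $F_k \to F$ uniformly on compacts, and in particular $F$ is continuous.

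Next, using $|F_k|^2 \geq |F|^2 - 2|F|\,|F_k - F|$ and integrating against $\nu_k$ over the bounded open set $U$, I obtain
\[
\int_U |F_k|^2\,d\nu_k \geq \int_U |F|^2\,d\nu_k - 2\|F\|_{L^\infty(\overline U)}\,\|F_k-F\|_{L^\infty(\overline U)}\,\nu_k(\overline U).
\]
The error term tends to $0$ by the uniform convergence on $\overline U$ (a compact set) and the uniform boundedness of $\nu_k(\overline U)$, so it suffices to prove $\liminf_k \int_U |F|^2\,d\nu_k \geq \int_U |F|^2\,d\nu$.

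For this final step, I would approximate $\chi_U$ from below by $\eta \in C_c(U)$ with $0 \leq \eta \leq 1$ and $\eta \nearrow \chi_U$. Since $F$ is continuous, $\eta\,|F|^2 \in C_c(\R^d)$, so weak convergence gives $\int \eta\,|F|^2\,d\nu_k \to \int \eta\,|F|^2\,d\nu$; hence
\[
\liminf_{k\to\infty} \int_U |F|^2\,d\nu_k \geq \int \eta\,|F|^2\,d\nu,
\]
and monotone convergence as $\eta \nearrow \chi_U$ upgrades the right side to $\int_U |F|^2\,d\nu$. The main technical obstacle is step (a), the uniform convergence $F_k \to F$ on compacts, which is where the equi-Lipschitz hypothesis on $\psi_k$ and the uniform support control genuinely enter; once that is in hand, the remaining manipulations are routine.
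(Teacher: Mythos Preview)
Your proof is correct and follows essentially the same approach as the paper: establish uniform convergence of $F_k$ to $F$ on a fixed compact set containing $U$, then pass to the limit using weak convergence together with an approximation of $\chi_U$ from below by functions in $C_c(U)$. The paper is terser in step (a) (it invokes equicontinuity and uniform boundedness and asserts the limit is $F$ without spelling out the $\epsilon$-net argument), and in the final step it shows directly that $\int g|F_k|^2\,d\nu_k \to \int g|F|^2\,d\nu$ for $g\in C_c(U)$ rather than splitting off an error via $|F_k|^2\geq|F|^2-2|F||F_k-F|$; these are minor stylistic differences, not different strategies.
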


\begin{proof}
Choose $M$ such that $U\subset B(0, M)$.  Notice that the function
$$f_k(x) = \int_{\R^d}\psi_k(x,y)d\nu_k(y)
$$
has both its modulus of continuity and supremum norm on the set $B(0,M)$ bounded in terms of $M$, $R$, $\sup_k\|\psi_k\|_{\Lip}$ and $\sup_k\nu_k(B(0, R+M))$.   Consequently, the functions $f_k$ converge uniformly to the function $f(x)=\int_{\R^d}\psi(x,y)d\nu(y)$ on $B(0,M)$.  But now, for $g\in C_0(B(0,M))$, the sequence $g|f_k|^2$ converges to $g|f|^2$ uniformly, and so from the weak convergence of $\nu_k$ to $\nu$  we conclude that
$$\lim_{k\rightarrow\infty}\int_{\R^d}g|f_k|^2d\nu_k = \int_{\R^d}g|f|^2d\nu.
$$
The desired lower semi-continuity property readily follows by choosing for $g$ an increasing sequence of functions in $\Lip_0(B(0,M))$ that converges to $\chi_{U}$ pointwise.
\end{proof}





\begin{lem}\label{genconv}  Suppose that $\mu_k$ is a sequence of measures satisfying
\begin{enumerate}
\item $\I_{\mu_k}(Q_0)\geq 1$,
\item $\sup_k \mu_k(B(0,R))<\infty$ for every $R>0$,
\item $\mathcal{S}_{\mu_k}^k(Q_0)\leq \frac{1}{k}$.
\end{enumerate}
Then there is a subsequence of the measures that converges weakly to a $\varphi$-symmetric measure $\mu$ satisfying $\I_{\mu}(Q_0) \geq 1$.
\end{lem}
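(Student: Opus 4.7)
The plan is to extract a weakly convergent subsequence from the local uniform bound in~(2), verify the non-triviality condition $\I_\mu(Q_0) \geq 1$ via the continuity of $\varphi_{Q_0}$, and then use the smallness of the square function constituent in~(3) together with Lemma~\ref{stupidlemma} to show that the weak limit is $\varphi$-symmetric.

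First I would apply Banach--Alaoglu together with a diagonal argument across the balls $B(0,n)$, $n \in \mathbb{N}$, to extract a subsequence (still denoted $\mu_k$) converging weakly to some locally finite Borel measure $\mu$. Condition~(1) together with the weak convergence applied to the compactly supported continuous test function $\varphi_{Q_0}$ gives
$$\I_\mu(Q_0) = \lim_k \I_{\mu_k}(Q_0) \geq 1,$$
so $\mu$ is non-trivial.

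For the $\varphi$-symmetry, I would fix arbitrary $R > 0$ and $T > 1$. Once $k$ is large enough that $kB_{Q_0} \supset B(0,R)$ and $[\ell(Q_0)/k,\, k\ell(Q_0)] \supset [1/T,\, T]$, condition~(3) combined with Fubini yields
$$\int_{1/T}^T \int_{B(0,R)} \Bigl| \int_{\R^d} \frac{x-y}{t^{s+1}} \varphi\Bigl(\frac{|x-y|}{t}\Bigr)\, d\mu_k(y) \Bigr|^2 d\mu_k(x)\, \frac{dt}{t} \leq \frac{1}{k}.$$
For each fixed $t > 0$, apply Lemma~\ref{stupidlemma} with the constant sequence $\psi_k \equiv \psi$ where $\psi(x,y) = (x-y)\, t^{-s-1} \varphi(|x-y|/t)$; this $\psi$ is Lipschitz with $\supp(\psi(x,\cdot)) \subset B(x, 2t)$, so the hypotheses hold with fiber radius $2t$. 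Combining the resulting lower semicontinuity with Fatou's lemma applied in the $t$-variable and the bound above produces
$$\int_{1/T}^T \int_{B(0,R)} \Bigl| \int_{\R^d} \frac{x-y}{t^{s+1}} \varphi\Bigl(\frac{|x-y|}{t}\Bigr)\, d\mu(y) \Bigr|^2 d\mu(x)\, \frac{dt}{t} = 0.$$

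The only step requiring care is upgrading this to the pointwise identity in the definition of $\varphi$-symmetry. Since $R$ and $T$ were arbitrary, the inner expression vanishes for $\mu \otimes \tfrac{dt}{t}$-a.e. $(x, t) \in \R^d \times (0, \infty)$. The map $(x,t) \mapsto \int (x-y) \varphi(|x-y|/t)\, d\mu(y)$ is jointly continuous on $\R^d \times (0, \infty)$ (by dominated convergence, using the local finiteness of $\mu$ and the bounded support of $\varphi$). Were the map non-zero at some $(x_0, t_0) \in \supp(\mu) \times (0,\infty)$, joint continuity would give a product neighborhood $U \times I$ of $(x_0, t_0)$ on which it stays non-zero; by the definition of $\supp(\mu)$, $\mu(U) > 0$, so this neighborhood would have positive $\mu \otimes \tfrac{dt}{t}$-measure, contradicting the null-set property. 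Hence the map vanishes on all of $\supp(\mu) \times (0, \infty)$, which is precisely the $\varphi$-symmetry of $\mu$.
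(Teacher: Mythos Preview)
Your proof is correct and follows essentially the same route as the paper: extract a weak limit from (2), get $\I_\mu(Q_0)\geq 1$ from (1), apply Lemma~\ref{stupidlemma} at each fixed $t$, then Fatou in $t$ against the vanishing bound from (3), and finally pass from almost-everywhere vanishing to everywhere on $\supp(\mu)\times(0,\infty)$ by continuity. If anything, your final step is slightly more carefully stated than the paper's, since you explicitly invoke joint continuity in $(x,t)$ rather than only continuity in $x$.
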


The reader should compare item (3) in the assumptions of the lemma with the display (\ref{bigsquarecoef}).  This lemma will be used to argue by contradiction that (\ref{bigsquarecoef}) holds for certain choices of function $\Gamma$.

\begin{proof}Using the condition (2) we pass to a subsequence of the measures that converges weakly to a measure $\mu$.  It is immediate from (1) that $\I_{\mu}(Q_0) \geq 1$. To complete the proof it remains to demonstrate that $\mu$ is $\varphi$-symmetric, that this,
\begin{equation}\label{genconvwhatwewant}\int_{\R^d}(x-y)\varphi\Bigl(\frac{|x-y|}{t}\Bigl)d\mu(y)=0\text{ for every }x\in \supp(\mu)\text{ and }t>0.
\end{equation}

To this end, fix $M>0$ and $t>0$.  We apply Lemma \ref{stupidlemma} with $\nu_k=\mu_k$, $\nu=\mu$, and $\psi_k(x,y)=(x-y)\varphi\bigl(\tfrac{|x-y|}{t}\bigl)$.  This yields that
\begin{equation}\begin{split}\nonumber\int_{B(0,M)}\Bigl|\int_{\R^d}&(x-y)\varphi\Bigl(\frac{|x-y|}{t}\Bigl) d\mu(y)\Bigl|^2d\mu(x)\\&\leq \liminf_{k\to\infty}\int_{B(0,M)}\Bigl|\int_{\R^d}(x-y)\varphi\Bigl(\frac{|x-y|}{t}\Bigl) d\mu_k(y)\Bigl|^2d\mu_k(x).
\end{split}\end{equation}
After dividing both sides by $\tfrac{1}{t^{2(s+1)}}$, integrating this inequality over $(\tfrac{1}{M},M)$ with respect to $\tfrac{dt}{t}$ and applying Fatou's lemma we get
$$\int_{B(0,M)}\int_{\tfrac{1}{M}}^M\Bigl|\int_{\R^d}\frac{x-y}{t^{s+1}}\varphi\Bigl(\frac{|x-y|}{t}\Bigl) d\mu(y)\Bigl|^2\frac{dt}{t}d\mu(x)\leq \liminf_k \mathcal{S}_{\mu_k, \varphi}^M(Q_0),$$
and the right hand side is equal to $0$ because of the condition (3) (just note that $\mathcal{S}_{\mu_k, \varphi}^M(Q_0)\leq \mathcal{S}_{\mu_k, \varphi}^k(Q_0)$ for $k>M$). Since $M$ was chosen arbitrarily, and certainly the function $x\mapsto \int_{\R^d}(x-y)\varphi\bigl(\frac{|x-y|}{t}\bigl) d\mu(y)$ is continuous, we conclude that (\ref{genconvwhatwewant}) holds.
\end{proof}

\subsection{Geometric properties of measures and weak convergence}

In blow-up arguments, we shall often consider a sequence of measures with a weak limit that is $\varphi$-symmetric.  The lemmas of this section will allow us to extract information about the eventual behaviour of the sequence of measures from our knowledge of the limit measure.

\begin{lem}\label{geomconv}  Suppose $\mu_k\wlim \mu$, and $Q$ is a cube with $\I_{\mu}(Q)>0$.   Then for any $n>0$,
\begin{itemize}
\item $\lim_{k\to\infty} D_{\mu_k, n}(Q)= D_{\mu, n}(Q)$,  while, for $n\in \mathbb{Z}$,
\item $\beta_{\mu,n}(Q)= \lim_{k\rightarrow \infty}\beta_{\mu_k,n}(Q)$,\; and,
\item $\alpha_{\mu,n}(Q)= \lim_{k\rightarrow \infty}\alpha_{\mu_k,n}(Q).$
\end{itemize}
\end{lem}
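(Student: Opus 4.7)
The plan is to handle the three assertions in order of increasing difficulty, all using the same basic combination of weak convergence and compactness. The density statement is essentially immediate: since $\varphi_Q$ is continuous with compact support in $B_Q$, weak convergence $\mu_k\wlim\mu$ gives $\I_{\mu_k}(Q)\to\I_\mu(Q)$, and dividing by $\ell(Q)^n$ yields the first bullet. Because $\I_\mu(Q)>0$, we also have $\I_{\mu_k}(Q)>0$ for all sufficiently large $k$, so none of the denominators appearing in the $\beta$ and $\alpha$ expressions cause trouble.

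For the $\beta$-coefficient I would separately establish the two directions. Write $J_{\mu'}(L)=\int(\dist(x,L)/\ell(Q))^2\varphi_Q(x)\,d\mu'(x)$, so that $\beta_{\mu',n}(Q)^2\I_{\mu'}(Q)=\inf_{L\in\mathcal{P}_n}J_{\mu'}(L)$. The upper semi-continuity is straightforward: for any fixed plane $L$, weak convergence against the continuous compactly supported integrand gives $J_{\mu_k}(L)\to J_\mu(L)$, so $\limsup_k \inf_L J_{\mu_k}(L)\le J_\mu(L)$, and taking the infimum over $L$ finishes the bound. The substantive step is the $\liminf$ direction, where I need to extract a limit plane from a sequence of optimal planes $L_k$ for each $\mu_k$. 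Here Lemma \ref{leastsquares} is the key input: each $L_k$ contains the centre of mass of the finite measure $\varphi_Q\,d\mu_k$, which lies in $\overline{B_Q}$. Parameterizing an affine $n$-plane meeting $\overline{B_Q}$ by (a point in $\overline{B_Q}$, a linear subspace in the Grassmannian) places the $L_k$ in a compact space, so we can extract $L_k\to L_\infty$. Since $\dist(\cdot,L_k)\to\dist(\cdot,L_\infty)$ uniformly on $\overline{B_Q}$, the uniform convergence of the integrands together with weak convergence of $\mu_k$ yields $J_{\mu_k}(L_k)\to J_\mu(L_\infty)\geq\inf_L J_\mu(L)$, which is the desired lower bound after dividing through by the now-known density limit.

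The $\alpha$-coefficient is handled by the same scheme, but with an additional inner supremum over test functions to manage. For upper semi-continuity, fix a near-optimal admissible plane $L_0$ for $\alpha_{\mu,n}(Q)$ and use it for every $\mu_k$. The point to check is that $\sup_f\bigl|\int\varphi_Q f\,d(\mu_k-\mu)\bigr|\to 0$, where $f$ runs over the Lipschitz test family. That family is equicontinuous and uniformly bounded on $\overline{3B_Q}$, hence precompact in $C(\overline{3B_Q})$ by Arzel\`a--Ascoli; a finite $\varepsilon$-net reduces the sup to the convergence of finitely many individual integrals, and the limit $\vartheta_{\mu_k,L_0}\to\vartheta_{\mu,L_0}$ (which is just the density statement, since $\I_{\mathcal{H}^n|_{L_0}}(Q)>0$ once $L_0$ meets $\tfrac14 B_Q\subset\{\varphi_Q=1\}$) handles the flat reference measure term. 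For lower semi-continuity I would select an optimal plane $L_k$ and a near-optimal test function $f_k$ for each $\mu_k$, extract subsequences with $L_k\to L_\infty$ (using compactness of planes meeting $\overline{\tfrac14 B_Q}$) and $f_k\to f_\infty$ uniformly (Arzel\`a--Ascoli again), and pass to the limit on both factors simultaneously.

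The main obstacle I anticipate is the bookkeeping that keeps the infima and suprema well-behaved under the limit, and in particular one genuine minor issue in the $\alpha$ case: the definition restricts the infimum to planes meeting the \emph{open} ball $\tfrac14 B_Q$, whereas the subsequential limit $L_\infty$ might lie on the boundary sphere. I would resolve this by a small inward perturbation of $L_\infty$ whose cost is $o(1)$ by continuity in $L$ of both the functional $L\mapsto\sup_f\bigl|\int\varphi_Q f\,d(\mu-\vartheta_{\mu,L}\mathcal{H}^n|_L)\bigr|$ and the density $\vartheta_{\mu,L}$; this is enough to recover the $\liminf$ inequality and complete the proof.
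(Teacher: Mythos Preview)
Your proposal is correct and rests on exactly the same ingredients the paper uses: weak convergence for the density; Lemma~\ref{leastsquares} to confine the optimal $\beta$-planes to a compact set; compactness of the planes meeting $\tfrac14 B_Q$ (in the Hausdorff metric) together with Arzel\`a--Ascoli precompactness of the Lipschitz test family for the $\alpha$-coefficient. The packaging differs slightly: where you extract subsequential optimizers $L_k$ (and $f_k$) and pass to the limit directly, the paper instead observes that the relevant families of integrands are relatively compact in $C(\overline{B_Q})$ (respectively $C_0(\R^d)$), so that convergence of the $\min$/$\max$ over every finite subcollection already forces convergence of the full $\inf$/$\sup$. These are two standard and equivalent ways to run the same compactness argument. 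Your explicit handling of the open-versus-closed $\tfrac14 B_Q$ boundary issue by an inward perturbation is a point the paper absorbs into its continuity-in-$L$ remark; either treatment is fine.
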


\begin{proof} The first item of course follows immediately from the definition of weak convergence.  For the convergence of the $\beta$-coefficients, observe that 
for any finite subset $\wt{\mathcal{P}}'$ of the family $\wt{\mathcal{P}}$ of $n$-planes that intersect $B_Q$, we have
$$\lim_{k\to\infty}\min_{L\in \wt{\mathcal{P}}'} \int_{B_Q}\varphi_Q\dist(x,L)^2d\mu_k(x) = \min_{L\in \wt{\mathcal{P}}'} \int_{B_Q}\varphi_Q\dist(x,L)^2d\mu(x).
$$
From this, the convergence of the $\beta$-coefficients follows from observing that the collection of functions $\varphi_Q\dist(\,\cdot\,,L)^2$, $L\in \wt{\mathcal{P}}$, is a relatively compact set in $C(\overline{B_Q})$; and every plane which contains the centre of mass of any of the measures $\varphi_Q\mu_k$ or $\varphi_Q\mu$ must also intersect $B_Q$ (since $B_Q$ is a convex set containing $\supp(\varphi_Q)$).

 We argue similarly in the case of the $\alpha$ numbers: In this case we observe that

\begin{itemize}
\item $\mathcal{P}_Q = \{L\cap \overline{B_Q}:\, L\in \mathcal{P}_n,\, L\cap \tfrac{1}{4}B_{Q}\neq \varnothing\}$ is relatively compact in the Hausdorff metric, while, for any constant $K>0$,
\item $\mathcal{F} = \{f\in \Lip_0(3B_Q): \|f\|_{\Lip}\leq \tfrac{1}{\ell(Q)}\}$ is a relatively compact subset in $C_0(\R^d)$ equipped with uniform norm.
\end{itemize}

 For any finite subsets $\mathcal{P}'_Q\subset\mathcal{P}_Q$, $\mathcal{F}'\subset \mathcal{F}$, we have
\begin{equation}\begin{split}\nonumber\lim_{k\to\infty}&\max_{f\in \mathcal{F}'}\min_{L\in \mathcal{P}'_Q} \int_{B_Q}\varphi_Qf(x)d(\mu_k-\vartheta_{\mu_k,L}\mathcal{H}^n|_L)(x) \\&= \max_{f\in \mathcal{F}'}\min_{L\in \mathcal{P}'_Q} \int_{B_Q}\varphi_Qf(x)d(\mu-\vartheta_{\mu,L}\mathcal{H}^n|_L)(x).
\end{split}\end{equation}
To complete the proof, just notice that for every $f\in \mathcal{F}$, the function
$$L\cap \overline{B_Q}\mapsto \int_{\R^d} f\varphi_Q d\mathcal{H}^n|_L
$$
is continuous in the Hausdorff metric with a modulus of continuity bounded in terms of $\|\varphi_Q\|_{\Lip}$, and $\ell(Q)$, while the functionals
$$f\mapsto \int_{\R^d}\varphi_{Q}fd\mu_k, \text{ and }f\mapsto \int_{\R^d}\varphi_{Q}fd\mu
$$
are continuous in the uniform norm with moduli of continuity bounded independently of $k$.  Since the numbers $\vartheta_{\mu_k, L}$ are uniformly bounded over $k$ and $L\cap \overline{B_Q}\in \mathcal{P}_Q$, the convergence of the $\alpha$-coefficients follows.
\end{proof}

The next result is a clear consequence of Lemma \ref{geomconv} (and also Section \ref{stabilize}), but it will be useful to state it explicitly.

\begin{cor}\label{upapprox}
Suppose that $\mu_k\wlim\mu$.  Fix a sequence of lattices $\dy^{(k)}$ that stabilize in a lattice $\dy'$, $n\in \mathbb{Z}\cap (0,d)$, and $m\in (0,d)$.  If, for a cube $Q'\in \dy'$, we have $\beta_{\mu,n}(Q')>\beta$ and $D_{\mu,m}(Q')>T$, then for all sufficiently large $k$, we have
$$Q'\in \dy^{(k)},\; \beta_{\mu_k,n}(Q')>\beta \text{ and }D_{\mu_k, m}(Q')>T.$$
\end{cor}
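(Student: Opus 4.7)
The plan is to assemble the conclusion from three independent ingredients, each of which is essentially already established in the preceding material: lattice stabilization and the two convergence assertions of Lemma \ref{geomconv}. The argument is short; its content lies in checking that Lemma \ref{geomconv} applies and that we can take simultaneous tails of three convergent sequences.

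First I would observe that since the cube $Q'$ lies in the stabilizing lattice $\dy'$, the definition of stabilization (Section \ref{stabilize}) directly gives an integer $k_0$ such that $Q' \in \dy^{(k)}$ for all $k \geq k_0$. This handles the lattice membership claim without reference to the measures.

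Next I would verify the hypothesis $\I_{\mu}(Q')>0$ needed to invoke Lemma \ref{geomconv}. This follows from the assumption $D_{\mu,m}(Q')>T$, since $\I_{\mu}(Q')=\ell(Q')^{m}D_{\mu,m}(Q')>T\ell(Q')^{m}>0$ (recall $T>0$ is implicit, as otherwise the inequality $D_{\mu,m}(Q')>T$ has no content beyond positivity of $\I_{\mu}$; if $T \leq 0$ the conclusion on $D_{\mu_k,m}(Q')>T$ is immediate from the lower semicontinuity given by Lemma \ref{stupidlemma}). With $\I_{\mu}(Q')>0$ confirmed, Lemma \ref{geomconv} yields the two convergence statements
\[
\lim_{k\to\infty} D_{\mu_k,m}(Q') = D_{\mu,m}(Q'), \qquad \lim_{k\to\infty} \beta_{\mu_k,n}(Q') = \beta_{\mu,n}(Q'),
\]
where the second limit uses $n\in\mathbb{Z}\cap(0,d)$.

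Finally, from these two limits combined with the strict inequalities $D_{\mu,m}(Q')>T$ and $\beta_{\mu,n}(Q')>\beta$, there exist thresholds $k_1,k_2$ with $D_{\mu_k,m}(Q')>T$ for $k\geq k_1$ and $\beta_{\mu_k,n}(Q')>\beta$ for $k\geq k_2$. Taking $k \geq \max(k_0,k_1,k_2)$ delivers all three conclusions simultaneously. There is no genuine obstacle here: the only points to be a little careful about are (i) making sure Lemma \ref{geomconv} is legitimately applicable (which needed the positivity of $\I_{\mu}(Q')$, handed to us by the density hypothesis), and (ii) that the strictness of the inequalities is preserved under convergence, which is automatic because we compare the limit strictly to $T$ and $\beta$ rather than to the approximating quantities.
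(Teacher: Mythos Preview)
Your proof is correct and is exactly the approach the paper takes: the paper does not give a separate proof but states that the corollary is a clear consequence of Lemma \ref{geomconv} together with the lattice stabilization of Section \ref{stabilize}, which is precisely what you have written out.
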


\begin{lem}\label{downapprox}  Fix $n\in \mathbb{N}$, $n<d$.  Suppose that $\mu_k\wlim\mu$, for some measure $\mu$ with $\I_{\mu}(Q_0) =  1$ for which $\supp(\mu)$ is $n$-rectifiable. Fix a sequence of lattices $\dy^{(k)}$, all containing $Q_0$, that stabilize in a lattice $\dy'$.  For any $\delta\in (0,1)$ and $\kap>0$, we can find a finite collection of cubes $Q_j$ such that
 \begin{enumerate}
 \item  $\ell(Q_j)\leq\kap$,
\item  $3B_{Q_j}$  are disjoint, and $3B_{Q_j}\subset 3B_{Q_0}$,
\end{enumerate} and, for all sufficiently large\footnote{This largeness threshold is purely qualitative.  It may depend on $\kap$, but also on the density properties of $\mu$, and the rate at which the lattices $\dy^{(k)}$ stabilize.} $k$,
\begin{enumerate}[resume]
\item  $Q_j\in \dy^{(k)}$,
\item  $\displaystyle D_{\mu_k, n+\delta}(Q')\leq \Bigl(\frac{\ell(Q_j)}{\ell(Q')}\Bigl)^{\delta/2}D_{\mu_k, n+\delta}(Q_j)$ for every $Q'\in \dy^{(k)}$ satisfying $B_{Q_j}\subset B_{Q'}\subset 300 B_{Q_0}$,
\item $\sum_{j}\I_{\mu_k}(Q_j) \geq \frac{1}{C}\I_{\mu_k}(Q_0).$
\end{enumerate}

\end{lem}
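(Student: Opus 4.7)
The plan is to select the cubes $Q_j$ from the lattice $\dy'$ via a stopping-time argument on $\mu$ grounded on the lower $n$-density supplied by rectifiability, and then transfer all conclusions to $\mu_k$ for sufficiently large $k$ using weak convergence and the lattice stabilisation lemma. Since $\supp(\mu)$ is $n$-rectifiable, Lemma~\ref{rectdens} gives $\liminf_{Q\ni x,\,\ell(Q)\to 0}D_{\mu,n}(Q)>0$ at $\mu$-a.e.\ $x$. A routine measurability and inner compact approximation produces a compact $G\subset B_{Q_0}$ with $\mu(G)\geq c\,\I_\mu(Q_0)$ and constants $c_0,r_0>0$ such that $D_{\mu,n}(Q)\geq c_0$ whenever $x\in G\cap Q$ and $\ell(Q)\leq r_0$. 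Setting $F_\mu(Q):=\I_\mu(Q)/\ell(Q)^{n+\delta/2}=\ell(Q)^{\delta/2}D_{\mu,n+\delta}(Q)$, condition~(4) is equivalent to $F_{\mu_k}(Q')\leq F_{\mu_k}(Q_j)$; and for $x\in G$ and the chain $Q^{(k)}(x)\in\dy'$ of cubes of side $2^{-k}$ containing $x$, the lower density forces $F_\mu(Q^{(k)}(x))\geq c_0\cdot 2^{k\delta/2}\to\infty$.

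For each $x\in G$ I select a stopping cube $Q(x)\in\dy'$ of side at most $\kap$ with the strict local-maximum property $F_\mu(Q(x))>F_\mu(Q')$ for every $Q'\in\dy'$ with $B_{Q(x)}\subsetneq B_{Q'}\subset 300 B_{Q_0}$. The divergence of $F_\mu$ along the chain, together with the polynomial growth bound $\sup_{Q}D_\mu(Q)\leq M$ available in the ambient setting (and hence the uniform bound $F_\mu(Q)\leq M\ell(Q)^{-\delta/2}$), ensures termination of this rule. A Vitali selection from the bounded-radius balls $\{3B_{Q(x)}:x\in G\}$ produces a disjoint sub-family $\{3B_{Q_j}\}$ with $G\subset\bigcup_j 15 B_{Q_j}\subset 3 B_{Q_0}$ (the last inclusion for $\kap$ small), and each $Q_j$ inherits the enclosing-maximum property. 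Auto-doubling then follows: taking the ancestor $\wt Q_j\in\dy'$ of $Q_j$ of side comparable to $\ell(Q_j)$ with $B(x_{\wt Q_j},2\sqrt{d}\ell(\wt Q_j))\supset 15 B_{Q_j}$, we obtain $\mu(15 B_{Q_j})\leq\I_\mu(\wt Q_j)\leq F_\mu(Q_j)\ell(\wt Q_j)^{n+\delta/2}\leq C\,\I_\mu(Q_j)$. Compactness of $G$ reduces the Vitali family to a finite sub-cover, establishing items~(1) and (2) and the bound $\sum_j\I_\mu(Q_j)\geq c'\,\I_\mu(Q_0)$.

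For the transfer to $\mu_k$: by the lattice stabilisation lemma, for all sufficiently large $k$ every $Q_j$ and every $Q'\in\dy'$ with $B_{Q'}\subset 300 B_{Q_0}$ lies in $\dy^{(k)}$, and by Lemma~\ref{geomconv}, $\I_{\mu_k}(Q)\to\I_\mu(Q)$ and $D_{\mu_k,n+\delta}(Q)\to D_{\mu,n+\delta}(Q)$ on this finite cube family. The strict inequalities $F_\mu(Q')<F_\mu(Q_j)$, enforced over finitely many pairs, carry a uniform positive gap and therefore pass to $F_{\mu_k}(Q')\leq F_{\mu_k}(Q_j)$ for all large $k$, which is~(4); items~(3) and~(5) follow from the convergence of $\I_{\mu_k}$ on this finite cube family. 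The main technical point is the stopping-time construction in the second paragraph: one must arrange a \emph{strict} local maximum over \emph{all} enclosing cubes, not only dyadic ancestors, because weak convergence alone cannot transfer a non-strict inequality from $\mu$ to $\mu_k$, and the strictness is what makes $F_\mu(Q^{(k)}(x))\to\infty$ (furnished by rectifiability via Lemma~\ref{rectdens}) indispensable.
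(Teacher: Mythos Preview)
Your proof is correct and follows essentially the same route as the paper: a stopping-time selection of maximal cubes in $\dy'$ driven by the blow-up of lower-dimensional density furnished by Lemma~\ref{rectdens}, a Vitali refinement for disjointness and the measure lower bound, and transfer of the (finitely many) density inequalities to $\mu_k$ via Lemma~\ref{geomconv} and lattice stabilisation.

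Two small remarks. First, the paper obtains the margin needed to survive weak convergence slightly differently: it runs the stopping time on $D_{\mu,n+\delta'}$ with an auxiliary $\delta'\in(0,\delta/2)$, which yields the quantitative inequality $D_{\mu,n+\delta}(Q'')\le 2^{-(\delta-\delta')[Q'':Q_j]}D_{\mu,n+\delta}(Q_j)$ for the limit measure; since $\delta-\delta'>\delta/2$, the exponent drops to $\delta/2$ after passing to $\mu_k$. Your device---working directly with $F_\mu=D_{\mu,n+\delta/2}$ and using that a \emph{strict} inequality on a finite family passes to the limit---achieves the same end. Second, your appeal to ``the polynomial growth bound $\sup_Q D_\mu(Q)\le M$ available in the ambient setting'' is both unnecessary and not a hypothesis of the lemma (and the stated consequence $F_\mu(Q)\le M\ell(Q)^{-\delta/2}$ has the wrong exponent, since $D_\mu=D_{\mu,s}$ with $s\neq n$). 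Termination of your stopping rule needs only that $F_\mu$ is bounded on the \emph{finite} collection of cubes $Q'\in\dy'$ with $\ell(Q')\ge\kap$ and $B_{Q'}\subset 300B_{Q_0}$, which is immediate from $\I_\mu(Q')\le\mu(300B_{Q_0})<\infty$.
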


\begin{proof} From Lemma \ref{rectdens} we infer that, for any $\delta'\in (0,\delta/2)$
$$\lim_{\substack{ Q'\in \dy', x\in Q'\\\ell(Q')\rightarrow 0}}D_{\mu, n+\delta'}(Q')=\infty \text{ for }\mu\text{-a.e. }x\in \supp(\mu).
$$
Fix $T>0$.  Consider the maximal (by inclusion of the associated balls $B_{Q'}$) cubes $Q'\in \dy'$ with $B_{Q'}\subset 300B_{Q_0}$ that intersect $B_{Q_0}$ and satisfy $D_{\mu, n+\delta'}(Q')> T$.  If $T$ is sufficiently large then $\ell(Q')\leq \kap$, and certainly $3B_{Q'}\subset 3B_{Q_0}$, and so property (1), along with the second assertion in property (2), hold for the maximal cubes $Q'$.

For each maximal cube $Q'$ we have that
 \begin{equation}\label{upkcomplimit}D_{\mu, n+\delta}(Q'')\leq 2^{-(\delta-\delta')[Q'':Q']}D_{\mu, n+\delta}(Q')
\end{equation}
 for every $Q''\in \dy' $ satisfying \begin{equation}\label{upcubeconds}B_{Q'}\subset B_{Q''}\subset 300 B_{Q_0}\text{ and }B_{Q''}\cap B_{Q_0}\neq \varnothing.\end{equation}  As there are only finitely many $Q''$ satisfying (\ref{upcubeconds}), we have that for large enough $k$ (possibly depending on $Q'$)
 \begin{equation}\begin{split}\label{upkcomp}&D_{\mu_k,n+\delta}(Q'')\leq 2^{-\tfrac{\delta}{2}[Q'':Q']}D_{\mu_k, n+\delta}(Q')\\&\text{ for every }Q''\in \dy' \text{ satisfying }(\ref{upcubeconds}).\end{split}
\end{equation}

Now take a finite subcollection $\mathcal{G}$ of the maximal cubes with the property that
$\sum_{Q'\in \mathcal{G}}\I_{\mu}(Q')>\frac{1}{2}\I_{\mu}(Q_0)=\frac{1}{2}$ ($\mu$-almost every point in $B_{Q_0}$ is contained in a maximal cube).

If $k$ is sufficiently large, then every cube $Q'$ in the finite collection $\mathcal{G}$  satisfies (\ref{upkcomp}).  Moreover, since the lattices $\dy^{(k)}$ stabilize, we infer that if $k$ is large enough, then every $Q'\in \mathcal{G}$ and every $Q''\in \dy'$ satisfying (\ref{upcubeconds}) lies in $\dy^{(k)}$. It follows that properties (3) and (4) hold for every cube in $\mathcal{G}$ if $k$ is large enough.

Finally, using the Vitali covering lemma we choose a pairwise disjoint sub-collection $\{3B_{Q'_j}\}_j$ of the collection of balls $\{3B_{Q'}: Q'\in \mathcal{G}\}$ such that $\bigcup_{j} 15B_{Q'_j}\supset \bigcup_{Q'\in \mathcal{G}}3B_{Q'}.$   From (\ref{upkcomplimit}) we derive that $\mu(15B_{Q'_j})\leq C\I_{\mu}(Q'_j)$.  Whence
 $$\frac{1}{2} < \sum_j \mu(15B_{Q_j})\leq C\sum_j\I_{\mu}(Q'_j).
 $$
 Thus, as long as $k$ is large enough, we have that $\sum_j \I_{\mu_k}(Q'_j)\geq \frac{1}{C}$.  Consequently, the collection of cubes $(Q'_j)_j$ satisfies all of the desired properties. \end{proof}

\section{Domination from below}\label{downdom}

Fix $n=\uis-1$.

We introduce two parameters, $\eps\in (0,1)$ and $\delta\in (0,1)$, satisfying
$$n+2\delta+\eps<s, \text{ and }s+2\eps <\lis +1.
$$

\subsection{Domination from below} We introduce a filter on a dyadic lattice $\dy$ from \cite{JNRT} called domination from below.  Fix a measure $\mu$, and subsets $\mathcal{G},\mathcal{G}'\subset \dy$.

\begin{defn}\label{defndown}  We say that $Q\in \mathcal{G}$ \textit{is dominated from below by a (finite) bunch of cubes} $Q_j\in \mathcal{G}'$ if the following conditions hold:
\begin{enumerate}
\item $D_{\mu}(Q_j)> 2^{\eps  [Q:Q_j]}D_{\mu}(Q)$,
\item $3B_{Q_j}$ are disjoint,
\item $3B_{Q_j}\subset 3B_{Q}$,
\item $\displaystyle\sum_j D_{\mu}(Q_j)^2 2^{-2\eps [Q:Q_j]}\I_{\mu}(Q_j) > D_{\mu}(Q)^2 \I_{\mu}(Q).$
\end{enumerate}
We set $\gdown(\mathcal{G}'$) to be the set of all cubes $Q$ in $\mathcal{G}$ that cannot be dominated from below by a bunch of cubes in $\mathcal{G}'$ (except for the trivial bunch consisting of $Q$ itself in the case when $Q\in \mathcal{G}\cap \mathcal{G}'$. If $\mathcal{G}'=\mathcal{G}$, then we just write $\gdown$ instead of $\gdown(\mathcal{G})$.
\end{defn}

\begin{lem}\label{downwolff}  Suppose that $\sup_{Q\in \mathcal{G}}D_{\mu}(Q)<\infty$.  Then there exists $c(\eps)>0$  such that
$$\sum_{Q\in \gdown}D_{\mu}(Q)^2\I_{\mu}(Q) \geq c(\eps) \sum_{Q\in \mathcal{G}}D_{\mu}(Q)^2\I_{\mu}(Q).
$$
\end{lem}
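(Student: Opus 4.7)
The plan is to prove the lemma by iterative substitution: for every $Q_0 \in \mathcal{G} \setminus \gdown$ we fix a non-trivial dominating bunch $\mathcal{B}(Q_0) \subset \mathcal{G}$, then recursively replace each cube in the bunch that is itself not in $\gdown$ by its own dominating bunch, obtaining a descent tree $\mathcal{T}(Q_0)$ rooted at $Q_0$ with leaves forming a (hopefully finite) collection $\dy_*(Q_0) \subset \gdown$. The point is that condition (4) in Definition~\ref{defndown} lets us express the $(Q_0,\gdown)$-weight in terms of the weights of the leaves, with an exponentially decaying prefactor controlled by the scale gap.

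For the descent tree to be finite, I will use the density-growth built into condition (1). Note first that any $Q_j \in \mathcal{B}(Q)$ must satisfy $Q_j \neq Q$ (otherwise condition (1) would read $D_{\mu}(Q) > D_{\mu}(Q)$), and then $3B_{Q_j} \subset 3B_Q$ in (3) forces $\ell(Q_j) < \ell(Q)$, hence $[Q:Q_j] \geq 1$ and $D_{\mu}(Q_j) > 2^{\eps}D_{\mu}(Q)$. Therefore densities strictly increase by a factor of at least $2^{\eps}$ along any chain; since $\sup_{\mathcal{G}}D_\mu < \infty$, every chain has depth at most $\eps^{-1}\log_2(\sup_{\mathcal{G}} D_\mu / D_\mu(Q_0))$ (cubes with $D_\mu(Q_0)=0$ contribute nothing and can be discarded), and because each individual bunch is finite, $\mathcal{T}(Q_0)$ is a finite tree. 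Iterating condition (4) along $\mathcal{T}(Q_0)$ and using the telescoping identity $[Q:Q'] + [Q':Q''] = [Q:Q'']$ for descending chains, an induction on tree depth yields
\[
D_{\mu}(Q_0)^2 \I_{\mu}(Q_0) \leq \sum_{P \in \dy_*(Q_0)} 2^{-2\eps [Q_0:P]} D_{\mu}(P)^2 \I_{\mu}(P).
\]

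Summing over $Q_0 \in \mathcal{G}$ and swapping the order of summation, the lemma will follow once I bound, for each fixed $P \in \gdown$, the multiplicity
\[
\mathcal{N}(P) \;:=\; \sum_{\substack{Q_0 \in \mathcal{G} \\ P \in \dy_*(Q_0)}} 2^{-2\eps[Q_0:P]}.
\]
Condition (3) propagates along chains: whenever $P \in \dy_*(Q_0)$, one has $3B_P \subset 3B_{Q_0}$, which forces $x_{Q_0}$ to lie within $12\sqrt{d}\,\ell(Q_0)$ of $x_P$. For a dyadic lattice this permits at most $C_d$ cubes at each scale $\ell(Q_0) = 2^k \ell(P)$, so
\[
\mathcal{N}(P) \;\leq\; \sum_{k \geq 0} C_d \, 2^{-2\eps k} \;\leq\; \frac{C_d}{1-2^{-2\eps}} \;=:\; C(\eps),
\]
which gives the claim with $c(\eps) = 1/C(\eps)$.

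The main obstacle here is not the combinatorial packing at the end (a routine dyadic count), but ensuring termination of the iterative substitution. That is the sole place where the hypothesis $\sup_{\mathcal{G}} D_\mu < \infty$ is used, and it interacts delicately with the parameter $\eps$ through the $2^{\eps[Q:Q_j]}$ growth forced by condition (1). Once termination is in hand, the rest of the argument is a mechanical accounting of weights along the descent tree.
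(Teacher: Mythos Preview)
Your proof is correct and follows essentially the same strategy as the paper. The paper phrases the construction of the terminal dominating bunch via a partial order on bunches (taking a minimal one, which must lie in $\gdown$), whereas you build it by explicit recursion down a descent tree; both arguments use the density growth in condition~(1) together with $\sup_{\mathcal G}D_\mu<\infty$ to force termination, then telescope condition~(4) via $[Q:P]=[Q:Q']+[Q':P]$, and finish with the same dyadic packing estimate $\sum_{Q:\,3B_Q\supset 3B_P}2^{-2\eps[Q:P]}\le C/\eps$.
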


\begin{proof}We start with a simple claim.

\textbf{Claim.}   Every $Q\in \mathcal{G}$ with $\I_{\mu}(Q)>0$ is dominated from below by a bunch of cubes $P_{Q,j}$ in $\gdown$.

To prove the claim we make two observations. The first is transitivity: if the bunch $Q_1, \dots, Q_N$ dominates $Q'\in \mathcal{G}$ from below, and if (say) $Q_1$ is itself dominated from below by a bunch $P_1, \dots, P_{N'}$, then the bunch $P_1, \dots, P_{N'}$, $ Q_2, \dots, Q_N$ dominates $Q'$.  The second observation is that there are only finitely many cubes $Q'$ that can participate in a dominating bunch for $Q$:  Indeed, each such cube $Q'$ satisfies $D_{\mu}(Q')\geq 2^{\eps  [Q:Q']}D_{\mu}(Q)$, and so
$$[Q:Q']\leq\frac{1}{\eps}\log_2\Bigl(\frac{\sup_{Q''\in \mathcal{G}}D_{\mu}(Q'')}{D_{\mu}(Q)}\Bigl).$$

With these two observations in hand, we define a partial ordering on the finite bunches of cubes $(Q_j)_j$ that dominate $Q$ from below:  For two different dominating bunches $(Q^{(1)}_j)_j$ and $(Q^{(2)}_j)_j$, we say that $(Q^{(1)}_j)_j \prec (Q^{(2)}_j)_j$ if for each ball $3B_{Q^{(1)}_j}$, we have $3B_{Q^{(1)}_j}\subset 3B_{Q^{(2)}_k}$ for some $k$.  Since there are only finitely many cubes that can participate in a dominating bunch, there may be only finitely many different dominating bunches of $Q$, and hence there is a minimal (according to the partial order $\prec$) dominating bunch $(P_{Q,j})_j$.  Each cube $P_{Q,j}$ must lie in $\gdown$.



Now write
\begin{equation}\begin{split}\nonumber\sum_{Q\in \mathcal{G}}& D_{\mu}(Q)^2\I_{\mu}(Q) \leq \sum_{Q\in \mathcal{G}}\sum_j D_{\mu}(P_{Q,j})^2\I_{\mu}(P_{Q,j})2^{-2\eps [Q:P_{Q,j}]}\\
& \leq \sum_{P\in \gdown}D_{\mu}(P)^2\I_{\mu}(P)\Bigl[\sum_{Q: 3B_Q\supset 3B_P}2^{-2\eps [Q:P]}\Bigl].
\end{split}\end{equation}
The inner sum does not exceed $\tfrac{C}{\eps }$, and the lemma follows.\end{proof}


\begin{lem}\label{stupidthing}  There exists $c(\eps)>0$  such that
$$\sum_{Q\in \mathcal{G}\backslash \gdown(\mathcal{G}')}D_{\mu}(Q)^2\I_{\mu}(Q) \leq C(\eps) \sum_{Q\in \mathcal{G}'}D_{\mu}(Q)^2\I_{\mu}(Q).
$$
\end{lem}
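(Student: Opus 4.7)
The plan is to mimic the scheme used in the proof of Lemma \ref{downwolff}, only now we do not need the preliminary ``minimal dominating bunch'' claim, since by hypothesis every $Q \in \mathcal{G}\setminus \gdown(\mathcal{G}')$ already admits a dominating bunch whose cubes lie in $\mathcal{G}'$.

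First, for each $Q \in \mathcal{G}\setminus \gdown(\mathcal{G}')$, choose (arbitrarily) a single bunch $(Q_j^Q)_j \subset \mathcal{G}'$ that dominates $Q$ from below in the sense of Definition \ref{defndown}. By condition (4) of that definition,
\[
D_\mu(Q)^2 \I_\mu(Q) \;\leq\; \sum_j D_\mu(Q_j^Q)^2 \, 2^{-2\eps [Q:Q_j^Q]} \, \I_\mu(Q_j^Q).
\]
Summing over $Q \in \mathcal{G}\setminus \gdown(\mathcal{G}')$ and interchanging the order of summation,
\[
\sum_{Q \in \mathcal{G}\setminus \gdown(\mathcal{G}')} D_\mu(Q)^2 \I_\mu(Q) \;\leq\; \sum_{P \in \mathcal{G}'} D_\mu(P)^2 \I_\mu(P) \cdot N(P),
\]
where $N(P) = \sum_{Q : \, \exists j,\, Q_j^Q = P} 2^{-2\eps [Q:P]}$.

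Next I would bound $N(P)$. Condition (3) of Definition \ref{defndown} forces $3B_P \subset 3B_Q$ whenever $P$ appears in the bunch of $Q$, which in particular yields $\ell(P) \leq \ell(Q)$, i.e. $[Q:P] = \log_2(\ell(Q)/\ell(P)) \geq 0$. For each integer $k \geq 0$, the number of dyadic cubes $Q$ at the scale $\ell(Q) = 2^k \ell(P)$ with $3B_Q \supset 3B_P$ is bounded by a constant depending only on $d$, because such $Q$ must have its center within distance $C \ell(Q)$ of the center of $P$. Hence
\[
N(P) \;\leq\; C \sum_{k=0}^\infty 2^{-2\eps k} \;\leq\; \frac{C}{\eps},
\]
and combining this with the previous display gives the asserted inequality with $C(\eps) = C/\eps$.

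The argument is essentially routine; the only minor subtlety is making sure that each fixed $P \in \mathcal{G}'$ appears at most once in the bunch chosen for a given $Q$, which is immediate from the disjointness condition (2) in Definition \ref{defndown}, and that the geometric restriction $3B_P \subset 3B_Q$ gives both the lower bound $[Q:P]\geq 0$ and the uniform bound on the number of admissible $Q$'s at each scale. No serious obstacle is expected.
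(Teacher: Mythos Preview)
Your proof is correct and follows essentially the same argument as the paper: pick a dominating bunch for each $Q$, apply condition (4), swap the order of summation, and bound the resulting inner sum $\sum_{Q:\,3B_Q\supset 3B_P}2^{-2\eps[Q:P]}$ by $C/\eps$ using the bounded overlap at each scale. Your additional remarks about disjointness and the scale comparison are accurate and make explicit what the paper leaves implicit.
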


\begin{proof}For each $Q\in\mathcal{G}\backslash \gdown(\mathcal{G}')$, just pick a bunch of cubes $P_{Q,j}$ in $\mathcal{G}'$ that dominates $Q$ from below.  Then we merely repeat the final calculation of the previous proof:
\begin{equation}\begin{split}\nonumber\sum_{Q\in \mathcal{G}\backslash \gdown(\mathcal{G}')}& D_{\mu}(Q)^2\I_{\mu}(Q) \\&\leq \sum_{Q\in \mathcal{G}\backslash \gdown(\mathcal{G}')}\sum_j D_{\mu}(P_{Q,j})^2\I_{\mu}(P_{Q,j})2^{-2\eps [Q:P_{Q,j}]}\\
& \leq \sum_{P\in \mathcal{G}'}D_{\mu}(P)^2\I_{\mu}(P)\Bigl[\sum_{Q\in \dy: 3B_Q\supset 3B_P}2^{-2\eps [Q:P]}\Bigl],
\end{split}\end{equation}
and the lemma follows.
\end{proof}

The domination from below filter is used in what follows to preclude the possibility that the support of a measure in a cube $Q\in \mathcal{G}_{\text{down}}$ concentrates on a set of dimension smaller than $s$.  In particular, we shall use the following lemma:

\begin{lem}\label{downdomlem}  Suppose that $\mu_k\wlim \mu$,  where $\I_{\mu}(Q_0) =  1$ and $\supp(\mu)$ is $n$-rectifiable (recall that $n=\uis-1$).  Fix a sequence of lattices $\dy^{(k)}$, all containing $Q_0$, that stabilize in a lattice $\dy'$.  Provided that $\kap>0$ is chosen sufficiently small, for all sufficiently large $k$, the bunch of cubes $Q_j$ constructed in Lemma \ref{downapprox} dominates $Q_0$ from below in the sense of properties (1)--(4) of Definition \ref{defndown}.
\end{lem}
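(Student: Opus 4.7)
The plan is to verify the four conditions of Definition \ref{defndown} with $Q = Q_0$ and the bunch $\{Q_j\}$ produced by Lemma \ref{downapprox}. Properties (2) and (3) of Definition \ref{defndown} are immediate from property (2) of Lemma \ref{downapprox}, while property (3) of that same lemma ensures $Q_j \in \dy^{(k)}$ for all sufficiently large $k$. The substantive work is therefore the derivation of (1) and (4) from the $(n+\delta)$-density comparison furnished by Lemma \ref{downapprox}.

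The bridge between the two lemmas is the elementary identity
\[
D_{\mu_k, n+\delta}(Q) \;=\; \ell(Q)^{s-n-\delta}\, D_{\mu_k}(Q).
\]
Applying property (4) of Lemma \ref{downapprox} with the choice $Q' = Q_0$ (which, since $Q_0\in \dy^{(k)}$, trivially satisfies $B_{Q_j}\subset B_{Q_0}\subset 300 B_{Q_0}$) and rewriting both sides in terms of $s$-densities yields the core comparison
\begin{equation}\label{corecomp}
D_{\mu_k}(Q_j)\;\geq\; 2^{(s-n-\delta/2)[Q_0:Q_j]}\, D_{\mu_k}(Q_0).
\end{equation}
The standing inequality $n+2\delta+\eps < s$ gives $s - n - \delta/2 > \eps + 3\delta/2 > \eps$, so provided $\kap\leq \tfrac{1}{2}$ (which forces $[Q_0:Q_j]\geq 1$), the bound \eqref{corecomp} is strictly stronger than condition (1) of Definition \ref{defndown}.

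For condition (4), squaring \eqref{corecomp} and using $[Q_0:Q_j]\geq \log_2(1/\kap)$ gives, for each $j$,
\[
D_{\mu_k}(Q_j)^2\,2^{-2\eps[Q_0:Q_j]}\,\I_{\mu_k}(Q_j)\;\geq\; \kap^{-2(s-n-\delta/2-\eps)}\,D_{\mu_k}(Q_0)^2\,\I_{\mu_k}(Q_j).
\]
Summing in $j$ and invoking property (5) of Lemma \ref{downapprox} produces
\[
\sum_j D_{\mu_k}(Q_j)^2\,2^{-2\eps[Q_0:Q_j]}\,\I_{\mu_k}(Q_j)\;\geq\; \frac{\kap^{-2(s-n-\delta/2-\eps)}}{C}\, D_{\mu_k}(Q_0)^2\, \I_{\mu_k}(Q_0),
\]
where $C$ is the absolute constant appearing in property (5). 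The only delicate point in the whole argument is that condition (4) demands a \emph{strict} inequality, not merely an inequality up to a constant; this is exactly where the freedom in the parameter $\kap$ is spent. Since $s - n - \delta/2 - \eps > 0$, choosing $\kap$ small enough (depending only on $C$, $s$, $\delta$, $\eps$) makes the prefactor $\kap^{-2(s-n-\delta/2-\eps)}/C$ strictly exceed $1$, which completes the verification of (4) and hence of the lemma.
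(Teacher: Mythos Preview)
Your proof is correct and follows essentially the same route as the paper: convert the $(n+\delta)$-density comparison from Lemma \ref{downapprox} into an $s$-density gain via the identity $D_{\mu_k,n+\delta}(Q)=\ell(Q)^{s-n-\delta}D_{\mu_k}(Q)$, then use that gain together with property (5) and the smallness of $\kap$ to force the strict inequalities (1) and (4). The only cosmetic difference is that you exploit the full factor $(\ell(Q_j)/\ell(Q_0))^{\delta/2}$ in property (4) of Lemma \ref{downapprox}, obtaining the exponent $s-n-\delta/2$, whereas the paper uses only the cruder consequence $D_{\mu_k,n+\delta}(Q_j)\geq D_{\mu_k,n+\delta}(Q_0)$ and the exponent $s-n-\delta$; either suffices.
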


\begin{proof}
First notice that, by property (4) of the conclusion of Lemma \ref{downapprox}, we have that $D_{\mu_k, n+\delta}(Q_j)\geq D_{\mu_k, n+\delta}(Q_0)$, and so \begin{equation}\begin{split}\label{downapproxstep} D_{\mu_k}(Q_j)&=2^{(s-n-\delta) [Q_j:Q_0]}D_{\mu_k,n+\delta}(Q_j)\\&\geq2^{(s-n-\delta) [Q_j:Q_0]}D_{\mu_k,n+\delta}(Q_0)\\&= 2^{(s-n-\delta)[Q_j:Q_0]}D_{\mu_k}(Q_0),\end{split}\end{equation} as long as $k$ is large enough.
Therefore property (1) of Definition \ref{defndown} is satisfied, as $s-n-\delta>\eps$.  Since properties (2) and (3) of Definition \ref{defndown} clearly hold, it remains to verify that
$$\sum_j D_{\mu_k}(Q_j)^22^{-2\eps[Q_0:Q_j]}\I_{\mu_k}(Q_j) >D_{\mu_k}(Q_0)^2\mathcal{I}_{\mu_k}(Q_0).
$$
Since $D_{\mu}(Q_0)=1$, we have $D_{\mu_k}(Q_0)\geq \frac{1}{2}$ for large $k$ and hence from (\ref{downapproxstep}) we derive that \begin{equation}\begin{split}\nonumber \sum_j D_{\mu_k}(Q_j)^2&e^{-2\eps [Q_0:Q_j]}\I_{\mu_k}(Q_j)\\& \geq \frac{1}{4}\min_j 2^{2(s-n-\delta-\eps)[Q_0:Q_j]}\sum_j \I_{\mu_k}(Q_j).
\end{split}\end{equation}
Using properties (1) and (5) in the conclusion of Lemma \ref{downapprox}, the right hand side here is clearly at least $$\frac{1}{C}\min_j 2^{2(s-n-\delta-\eps)[Q_0:Q_j]}\I_{\mu_k}(Q_0)\geq\frac{1}{C}\kap^{-2(s-n-\delta-\eps)}\I_{\mu_k}(Q_0)$$  since $\delta<s-n-\eps$.  But the right hand side here is larger than $\I_{\mu_k}(Q_0)$ provided that $\kap$ is small enough.\end{proof}

\section{Cubes with lower-dimensional density control are sparse}

Recall that $n = \uis -1$.  Fix a measure $\mu$.  For $M\in \mathbb{N}$, consider the set $\dy_M(\mu)$ 
  of cubes $Q\in \dy$ such that $$D_{\mu, n+\delta}(Q')\leq D_{\mu, n+\delta}(Q)$$ whenever $Q'\in \dy$ satisfies $B_{Q'}\supset B_{Q}$ and $[Q':Q]\leq M$.\\

The aim of this section is to prove the following result:

\begin{prop}\label{lowdoubsparse}  There exists $M\in \mathbb{N}$, $A>0$ and $C>0$ such that if $\mu$ is a finite measure satisfying $\sup_{Q\in \dy}D_{\mu}(Q)<\infty$, then
$$\sum_{Q\in \dy_{M}(\mu)}D_{\mu}(Q)^2\I_{\mu}(Q)\leq C\sum_{Q\in \dy} \mathcal{S}_{\mu}^A(Q). $$
\end{prop}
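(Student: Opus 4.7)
The plan is to apply the general principle of Section \ref{principle} with the choice $\Gamma_\mu(Q)=D_\mu(Q)^2$ for $Q\in\gdown(\dy_M(\mu))$ and $\Gamma_\mu(Q)=0$ otherwise, and then to combine the output with Lemma \ref{downwolff} (applied to $\mathcal{G}=\dy_M(\mu)$) to pass from the sum over $\gdown(\dy_M(\mu))$ back to the full sum over $\dy_M(\mu)$. This reduces the proposition to establishing the pointwise bound
\begin{equation*}
\mathcal{S}^A_\mu(Q)\geq\Delta\, D_\mu(Q)^2\,\I_\mu(Q)\qquad\text{for every }Q\in\gdown(\dy_M(\mu))
\end{equation*}
for some universal $\Delta>0$, once $A$ and $M$ are taken sufficiently large.

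I would prove this pointwise inequality via a blow-up argument. Suppose it fails. By invariance under affine change of variables we may arrange that the failure occurs at $Q=Q_0$ for a sequence of measures $\mu_k$ and dyadic lattices $\dy^{(k)}$ containing $Q_0$, normalized so that $\I_{\mu_k}(Q_0)=1$, with $Q_0\in\gdown(\dy_{M_k}(\mu_k))$, $\mathcal{S}^{A_k}_{\mu_k}(Q_0)\leq 1/k$, and $A_k,M_k\to\infty$. Membership $Q_0\in\dy_{M_k}(\mu_k)$ gives $\I_{\mu_k}(Q')\leq \ell(Q')^{n+\delta}$ for every dyadic ancestor $Q'$ of $Q_0$ of level at most $M_k$, which in turn provides the uniform bound $\sup_k \mu_k(B(0,R))<\infty$ needed to invoke Lemma \ref{genconv}. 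After passing to a subsequence and using lattice stabilization from Section \ref{stabilize}, that lemma supplies a $\varphi$-symmetric weak limit $\mu$ with $\I_\mu(Q_0)\geq 1$ and, passing to the limit in the growth bound, with $\mu(B(0,R))\leq C R^{n+\delta}$ for every $R>0$.

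Since $\delta<1$, Proposition \ref{structureprop}(1) forces $\supp(\mu)$ into some $n$-plane (otherwise the density $D_{\mu,n+1-\eta}$ would be unbounded at large scales for any $\eta<1-\delta$); Proposition \ref{structureprop}(2) then identifies $\supp(\mu)$ as $n$-rectifiable. We are therefore in the setting of Lemma \ref{downdomlem}, so, for the small $\kap$ provided there and all sufficiently large $k$, the cubes $(Q_j)$ produced by Lemma \ref{downapprox} form a non-trivial bunch dominating $Q_0$ from below for $\mu_k$ in the sense of Definition \ref{defndown}.

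The delicate step will be verifying that each $Q_j$ actually lies in $\dy_{M_k}(\mu_k)$; once this is done, the bunch $(Q_j)$ contradicts $Q_0\in\gdown(\dy_{M_k}(\mu_k))$ because its members are proper subsets of $Q_0$. I plan to argue by splitting on the size of an arbitrary $Q'\in\dy^{(k)}$ with $B_{Q'}\supset B_{Q_j}$ and $[Q':Q_j]\leq M_k$. If $\ell(Q')$ is small enough that $B_{Q'}\subset 300B_{Q_0}$, then property~(4) of Lemma \ref{downapprox} immediately gives $D_{\mu_k,n+\delta}(Q')\leq D_{\mu_k,n+\delta}(Q_j)$. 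Otherwise $\ell(Q')$ is comparable to or exceeds $\ell(Q_0)$, which forces $Q'$ to be a dyadic ancestor of $Q_0$ in $\dy^{(k)}$ satisfying $[Q':Q_0]\leq [Q':Q_j]\leq M_k$; combining $Q_0\in\dy_{M_k}(\mu_k)$ with Lemma \ref{downapprox}(4) applied at the scale $Q'=Q_0$ then chains the two bounds as $D_{\mu_k,n+\delta}(Q')\leq D_{\mu_k,n+\delta}(Q_0)\leq D_{\mu_k,n+\delta}(Q_j)$. With this bookkeeping on scales in hand, the contradiction closes and the proposition follows.
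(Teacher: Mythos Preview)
Your overall strategy is exactly the paper's: reduce via Lemma~\ref{downwolff} to a pointwise square-function lower bound on cubes in $\dy_{M,\text{down}}(\mu)$, then argue by contradiction via a blow-up to a $\varphi$-symmetric limit, use Proposition~\ref{structureprop} to force the limit into an $n$-plane, and invoke Lemmas~\ref{downapprox} and~\ref{downdomlem} to build a dominating bunch $(Q_j)$.

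There is, however, a genuine gap in your ``delicate step''. You assert that if $Q'\in\dy^{(k)}$ satisfies $B_{Q'}\supset B_{Q_j}$ but $B_{Q'}\not\subset 300B_{Q_0}$, then $Q'$ is a dyadic ancestor of $Q_0$. This is false: the defining condition for $\dy_M$ involves only the containment $B_{Q'}\supset B_{Q_j}$, not dyadic ancestry, and even the weaker conclusion $B_{Q'}\supset B_{Q_0}$ need not hold. Indeed, $B_{Q'}$ is only known to contain $B_{Q_j}$, whose centre may lie anywhere in $3B_{Q_0}$; a straightforward check shows that $B_{Q_0}\subset B_{Q'}$ would force $\ell(Q_j)\geq 1$, contradicting $\ell(Q_j)\leq\kap$. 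Consequently you cannot directly invoke the membership $Q_0\in\dy_{M_k}(\mu_k)$ to bound $D_{\mu_k,n+\delta}(Q')$.

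The fix (and what the paper does) is to pass to the dyadic grandparent $\wt Q'$ of $Q'$: since $\ell(Q')\gtrsim 30$ and $B_{Q'}$ meets $3B_{Q_0}$, one gets $B_{\wt Q'}\supset B_{Q_0}$, and trivially $D_{\mu_k,n+\delta}(Q')\leq C\,D_{\mu_k,n+\delta}(\wt Q')$. Now $Q_0\in\dy_{M_k}(\mu_k)$ applies to $\wt Q'$, yielding $D_{\mu_k,n+\delta}(\wt Q')\leq D_{\mu_k,n+\delta}(Q_0)$. The constant $C$ lost in passing to the grandparent is then absorbed by the full strength of property~(4) of Lemma~\ref{downapprox}, namely $D_{\mu_k,n+\delta}(Q_0)\leq \ell(Q_j)^{\delta/2}D_{\mu_k,n+\delta}(Q_j)\leq \kap^{\delta/2}D_{\mu_k,n+\delta}(Q_j)$, provided $\kap$ is taken small enough that $C\kap^{\delta/2}<1$. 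You cite property~(4) but record only the crude bound $D_{\mu_k,n+\delta}(Q_0)\leq D_{\mu_k,n+\delta}(Q_j)$, which is not enough once the grandparent loss is accounted for.
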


To prove Proposition \ref{lowdoubsparse}, we shall use the domination from below filter with the subsets $\mathcal{G}=\mathcal{G}' = \dy_M(\mu)$.  Write $\dy_{M,\text{down}}(\mu)$ for the set of cubes in $\dy_M(\mu)$ that cannot be dominated from below.   Lemma \ref{downwolff} yields that,
$$\sum_{Q\in \dy_{M,\text{down}}(\mu)}D_{\mu}(Q)^2\I_{\mu}(Q) \geq c(\eps) \sum_{Q\in \dy_{M}(\mu)}D_{\mu}(Q)^2\I_{\mu}(Q).
$$ Consequently, referring to the general principle of Section \ref{principle}, we find that in order to prove Proposition \ref{lowdoubsparse}, we need to verify (\ref{bigsquarecoef}) with $$\Gamma_{\mu}(Q) = \begin{cases} D_{\mu}(Q)^2\text{ if } Q\in \dy_{M,\text{down}}(\mu),\\\;0\text{ otherwise.}\end{cases}$$  We formulate this precisely as the following lemma:

\begin{lem}\label{downcoef}There exists $A>0$, $\Delta>0$ and $M\in \mathbb{N}$ such that for every measure $\mu$ and every cube $Q\in  \dy_{M,\operatorname{down}}(\mu)$, we have
$$\mathcal{S}_{\mu}^{A}(Q)\geq \Delta D_{\mu}(Q)^2\I_{\mu}(Q).
$$
\end{lem}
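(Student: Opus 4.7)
The plan is to argue by contradiction via a blow-up/compactness argument, extracting from the failure of the inequality a $\varphi$-symmetric measure whose structure forces a dominating bunch for $Q_0$, contradicting $Q_0\in \dy_{M,\text{down}}(\mu)$. Suppose the claim fails: for each $k\in\mathbb{N}$ I choose $M=A=k$ and $\Delta=1/k$, and find a measure $\mu_k$ and a cube $Q_k\in \dy_{k,\text{down}}(\mu_k)$ with
\[\mathcal{S}_{\mu_k}^{k}(Q_k) < \tfrac{1}{k}\,D_{\mu_k}(Q_k)^2\,\I_{\mu_k}(Q_k).\]
Applying $\mathcal{L}_{Q_k}^{-1}$ and rescaling the measures, I normalize so that $Q_k=Q_0$ and $D_{\mu_k}(Q_0)=\I_{\mu_k}(Q_0)=1$; after passing to a subsequence the lattices $\dy^{(k)}$ stabilize in some $\dy'$ by Section~\ref{stabilize}.

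The key use of the assumption $Q_0\in \dy_{k}(\mu_k)$ is that it produces the quantitative growth bound
\[\I_{\mu_k}(Q')\le \ell(Q')^{n+\delta}\quad\text{whenever $Q'\in\dy^{(k)}$, $B_{Q'}\supset B_{Q_0}$ and $[Q':Q_0]\le k$,}\]
which yields uniform local boundedness of $\mu_k$. All three hypotheses of Lemma~\ref{genconv} then hold, so after extracting a further subsequence I obtain $\mu_k\wlim\mu$ with $\mu$ $\varphi$-symmetric and $\I_{\mu}(Q_0)=1$ (the equality coming from Lemma~\ref{geomconv}). The growth bound passes to the limit, so $\mu$ has $(n+\delta)$-power growth.

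I now invoke the structure result (Proposition~\ref{structureprop}). If $\supp(\mu)$ were not contained in any $n$-plane, item (1) applied with any $\eps'<1-\delta$ and sufficiently large $T$ would furnish arbitrarily large cubes $Q$ with $\tfrac{1}{2}B_Q\supset B_{Q_0}$ and $\I_{\mu}(Q)>T\,\ell(Q)^{n+1-\eps'}$, in direct contradiction with the $(n+\delta)$-power growth. Hence $\supp(\mu)$ lies in an $n$-plane, and item (2) forces $\supp(\mu)$ to be $n$-rectifiable. Lemma~\ref{downdomlem} now applies: for all sufficiently large $k$, the bunch $\{Q_j\}$ constructed in Lemma~\ref{downapprox} (with $\kap$ small) dominates $Q_0$ from below in the sense of properties (1)--(4) of Definition~\ref{defndown}.

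The main obstacle, and the step where I expect the most care, is closing the contradiction by verifying that each $Q_j$ lies in $\dy_{k}(\mu_k)$, so that the exhibited bunch sits inside the filter $\mathcal{G}'=\dy_{k}(\mu_k)$ against which $Q_0\in\dy_{k,\text{down}}(\mu_k)$ is defined. For competitors $Q'\in\dy^{(k)}$ with $B_{Q_j}\subset B_{Q'}\subset 300\,B_{Q_0}$, property (4) of Lemma~\ref{downapprox} directly gives $D_{\mu_k,n+\delta}(Q')\le D_{\mu_k,n+\delta}(Q_j)$. For larger competitors $Q'$, a short geometric argument (using $Q_j\subset 3B_{Q_0}$ and $\ell(Q_j)\le\kap\ll 1$) shows $B_{Q'}\supset B_{Q_0}$; then the hypothesis $Q_0\in \dy_{k}(\mu_k)$ yields $D_{\mu_k,n+\delta}(Q')\le 1$, while Lemma~\ref{downapprox}(4) applied with the test cube $Q'=Q_0$ gives $D_{\mu_k,n+\delta}(Q_j)\ge \ell(Q_j)^{-\delta/2}\ge 1$. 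This completes the verification and produces a bunch in $\dy_k(\mu_k)$ dominating $Q_0$ from below, contradicting $Q_0\in \dy_{k,\text{down}}(\mu_k)$ and proving the lemma.
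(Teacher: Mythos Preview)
Your argument is essentially the paper's own proof: the same contradiction set-up, rescaling, growth bound from $Q_0\in\dy_k(\mu_k)$, compactness via Lemma~\ref{genconv}, the same structural dichotomy from Proposition~\ref{structureprop}, and the same use of Lemmas~\ref{downapprox} and~\ref{downdomlem} to produce a dominating bunch.

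One small imprecision: in your final step you claim that for a ``large'' competitor $Q'$ (one with $B_{Q_j}\subset B_{Q'}$ but $B_{Q'}\not\subset 300B_{Q_0}$) a short geometric argument gives $B_{Q'}\supset B_{Q_0}$ directly. This can fail: since $x_{Q_j}$ may lie near the edge of $3B_{Q_0}$, a cube $Q'$ with $B_{Q'}\supset B_{Q_j}$ and $\ell(Q')$ only moderately large can have $B_{Q'}$ escaping $300B_{Q_0}$ on the far side of $x_{Q_j}$ while still missing part of $B_{Q_0}$. The paper handles this by passing to the grandparent $\wt Q'$ of $Q'$ (sidelength $4\ell(Q')$), which does satisfy $B_{\wt Q'}\supset B_{Q_0}$; one then uses $D_{\mu_k,n+\delta}(Q')\le C\,D_{\mu_k,n+\delta}(\wt Q')$ together with \eqref{prelimdoub} and Lemma~\ref{downapprox}(4) to get $D_{\mu_k,n+\delta}(Q')\le C\ell(Q_j)^{\delta/2}D_{\mu_k,n+\delta}(Q_j)\le D_{\mu_k,n+\delta}(Q_j)$ for $\kap$ small. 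With this adjustment your argument is complete and matches the paper.
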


\begin{proof}  If the result fails to hold, then for every $k\in \mathbb{N}$, we can find a measure $\wt\mu_k$ and a cube $Q_k\in\dy_{k,\text{down}}(\wt\mu_k)$ such that
$$\mathcal{S}_{\wt\mu_k}^k(Q_k)\leq \frac{1}{k}D_{\wt\mu_k}(Q_k)^2\mathcal{I}_{\wt\mu_k}(Q_k).
$$

Consider the measure $\mu_k=\frac{\wt\mu_k(\mathcal{L}_{Q_k}(\,\cdot\,))}{\I_{\wt\mu_k}(Q_k)}$.  Then $D_{\mu_k}(Q_0)=\I_{\mu_k}(Q_0)=1$.  The preimage of the  lattice $\dy$  under the affine map $\mathcal{L}_{Q_k}$ is some lattice $\dy^{(k)}$ with $Q_0\in \dy^{(k)}$.  Of course, we have that $Q_0\in {\dy^{(k)}}_{k,\text{down}}(\mu_k)$. In addition
\begin{equation}\label{prelimdoub}D_{\mu_k,n+\delta}(Q')\leq 1 \text{ if } Q'\in \dy^{(k)}, \, B_{Q'}\supset B_{Q_0},\, [Q':Q_0]\leq k.
\end{equation}
It readily follows from this that for every $R>0$, $\sup_k \mu_k(B(0,R))<\infty$.   In addition, we have that
$\mathcal{S}_{\mu_k}^k(Q_0)\leq \frac{1}{k}.$
As such, we may apply Lemma \ref{genconv} and conclude that, passing to a subsequence if necessary, the sequence $\mu_k$ converges weakly to a $\varphi$-symmetric measure $\nu$ with $\I_{\nu}(Q_0)=1.$
With the passage to a further subsequence, we may assume that the lattices $\dy^{(k)}$ (which all contain $Q_0$) stabilize in a lattice $\dy'$ (see Section \ref{stabilize}).   Then from (\ref{prelimdoub}) we see that
$$D_{\nu, n+\delta}(Q')\leq D_{\nu, n+\delta}(Q_0) \text{ for every }Q'\in \dy' \text{ such that }  B_{Q'}\supset B_{Q_0}.
$$
From this property, we infer from Proposition \ref{structureprop} that $\supp(\nu)$ has to be contained in an $n$-plane and is therefore $n$-rectifiable ($\nu$ has insufficient growth at infinity for the other possibilities in Proposition \ref{structureprop} to hold).

 Consequently all the hypotheses of Lemma \ref{downapprox} are satisfied, and so we may consider the finite collection of cubes $Q_j$ constructed there, which may have side-length smaller than any prescribed threshold $\kap>0$.  Lemma \ref{downdomlem} ensures that the bunch of cubes $Q_j$ dominate $Q_0$ from below as long as $\kap$ is small enough.  Therefore, given that $Q_0\in {\dy^{(k)}}_{k,\text{down}}(\mu_k)$, we will have reached our desired contradiction once we verify the following:

 \textbf{Claim.} \emph{Provided that $\kap$ is small enough, and $k$ is sufficiently large, each cube $Q_j$ lies in ${\dy^{(k)}}_{k}(\mu_k)$.}

 To see this, notice that, for a cube $Q''$ with $B_{Q''}\supset B_{Q_j}$ and $\ell(Q'')\leq 2^k\ell(Q_j)$, it can only happen that property (4) of Lemma \ref{downapprox} does not immediately show that
$D_{\mu_k,n+\delta}(Q'')\leq D_{\mu_k, n+\delta}(Q_j)$ in the case when $B_{Q''}$ is not contained in $300B_{Q_0}$. But then since $B_{Q_j}\cap B_{Q_0}\neq \varnothing$, the cube $Q''$ has big side-length (certainly at least $30\ell(Q_0)$).   It follows that the grandparent of $Q''$, say $\wt{Q}''$, must satisfy $B_{\wt Q''}\supset B_{Q_0}$, while certainly $[\wt Q'':Q_0]\leq k$ if $\kap$ is small enough.  But now  we derive that
\begin{equation}\begin{split}\nonumber D_{\mu_k,n+\delta}(Q'')& \leq CD_{\mu_k, n+\delta}(\wt Q'')\stackrel{(\ref{prelimdoub})}{\leq} CD_{\mu_k, n+\delta}(Q_0)\\&\stackrel{\text{Lemma }\ref{downapprox},\, (4)}{\leq} C\ell(Q_j)^{\delta/2}D_{\mu_k, n+\delta}(Q_j)\leq D_{\mu_k, n+\delta}(Q_j)
\end{split}\end{equation}
 as long as $C\kap^{\delta/2}<1$.  The claim follows.
\end{proof}

\section{Domination from above and the proof of Theorem \ref{thm2}}

Consider a lattice $\dy$, and a non-negative function $\Ups:\dy\to [0,\infty)$.

\subsection{Domination from above}  We say that $Q'\in \dy$ dominates $Q\in\dy$ from above if $\tfrac{1}{2}B_{Q'}\supset B_{Q}$ and
$$\Ups(Q')>2^{\eps[Q':Q]}\Ups(Q)
$$
We let $\dyup$ denote the set of cubes $Q\in \dy$ that are not dominated from above by a cube in $\dy$.

\begin{lem}\label{upcubescontribute}  If $\sup_{Q\in \dy} \Ups(Q)<\infty$, then
$$\sum_{Q\in \dyup}\Ups(Q)^2\I_{\mu}(Q)\geq c(\eps)\sum_{Q\in \dy}\Ups(Q)^2\I_{\mu}(Q).
$$
\end{lem}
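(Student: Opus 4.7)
The plan is to mirror closely the proof of Lemma \ref{downwolff} (domination from below), but with the roles of ancestor and descendants swapped. I will assign to every $Q \in \dy$ a cube $P_Q \in \dyup$ that ``catches'' $Q$ from above, then reorganize the sum on the right-hand side and control the multiplicity of the assignment.

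The first step is the following claim: for every $Q \in \dy$, there exists $P_Q \in \dyup$ with $\tfrac{1}{2}B_{P_Q} \supset B_Q$ and $\Ups(P_Q) \geq 2^{\eps [P_Q:Q]}\Ups(Q)$. To establish it, I observe that the relation ``$Q'$ dominates $Q$ from above'' is transitive (since $\tfrac{1}{2}B_{Q'} \supset B_Q$ and $\tfrac{1}{2}B_{Q''} \supset B_{Q'}$ force $\tfrac{1}{2}B_{Q''} \supset B_Q$, while $[Q'':Q] = [Q'':Q'] + [Q':Q]$ gives the multiplicativity of the growth factor). Starting from $Q$, I iteratively replace each cube by an above-dominator; the assumption $\sup_{Q\in\dy}\Ups(Q) < \infty$ combined with the growth $\Ups(Q_j) \geq 2^{\eps\sum [Q_{j}:Q_{j-1}]}\Ups(Q)$ bounds the number of iterations, and transitivity shows the terminal cube $P_Q$ satisfies both required properties.

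With $P_Q$ in hand, I write
\begin{equation*}
\sum_{Q\in \dy}\Ups(Q)^2\I_{\mu}(Q) \leq \sum_{Q\in \dy}\Ups(P_Q)^2\,2^{-2\eps[P_Q:Q]}\I_{\mu}(Q) = \sum_{P\in \dyup}\Ups(P)^2 \Bigl[\sum_{Q:\,P_Q=P}2^{-2\eps[P:Q]}\I_{\mu}(Q)\Bigl],
\end{equation*}
and the task reduces to showing that the inner bracket is at most $\tfrac{C}{\eps}\I_{\mu}(P)$. Splitting by scale, I use that every $Q$ with $P_Q = P$ satisfies $B_Q \subset \tfrac{1}{2}B_P$. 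At a fixed scale $\ell(Q) = 2^{-k}\ell(P)$, the balls $B_Q$ with $Q \in \dy$ have bounded overlap, so $\sum_{Q} \varphi_Q \leq C \chi_{\tfrac{1}{2}B_P} \leq C\varphi_P$ pointwise on that scale; integrating against $\mu$ gives $\sum_{Q:\,\ell(Q)=2^{-k}\ell(P)}\I_{\mu}(Q) \leq C\I_{\mu}(P)$. Summing the geometric series $\sum_{k\geq 0} 2^{-2\eps k}$ yields the desired bound $\tfrac{C}{\eps}\I_{\mu}(P)$, and the lemma follows with $c(\eps) = c\eps$.

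The main obstacle I anticipate is purely technical: ensuring that the chain used to produce $P_Q$ is finite. The finiteness relies on $\sup \Ups < \infty$ together with the strict geometric growth $\Ups(Q_j) \geq 2^{\eps}\Ups(Q_{j-1})$ at each step of the chain, which limits the number of iterations by $\tfrac{1}{\eps}\log_2(\sup \Ups / \Ups(Q))$. (If $\Ups(Q) = 0$ the cube contributes nothing, so we may assume $\Ups(Q) > 0$ at the start.) Once this is in place, the scale-by-scale multiplicity estimate is routine, and the proof concludes by reading off the inequality with $c(\eps)$ inverse-proportional to the constant $C/\eps$ appearing in the inner bound.
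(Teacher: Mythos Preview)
Your argument is essentially the same as the paper's and is correct in substance. One small imprecision: your claim as stated fails for $Q\in\dyup$, since then the iteration terminates immediately with $P_Q=Q$, and $\tfrac12 B_Q\supset B_Q$ is false; the paper avoids this by assigning a dominator $\wt Q\in\dyup$ only to cubes $Q\in\dy\setminus\dyup$ (choosing $\wt Q$ of maximal side-length among the finitely many dominators of $Q$), and then treating the sum over $\dyup$ separately. This does not affect your bracket estimate, because the single term $Q=P$ contributes exactly $\I_\mu(P)$, and for every other $Q$ with $P_Q=P$ you do have $B_Q\subset\tfrac12 B_P$ by transitivity; so the fix is cosmetic.
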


\begin{proof} We first claim that every $Q\in \dy\backslash \dyup$ with $\Upsilon(Q)>0$ can be dominated from above by a cube $\wt{Q}\in \dyup$.

Indeed, note that if $Q'$ dominates $Q$ from above, then certainly
$$[Q':Q]\leq\frac{1}{\eps}\log_2\Bigl(\frac{\sup_{Q''\in \mathcal{D}}\Upsilon(Q'')}{\Upsilon(Q)}\Bigl),$$ or else we would have that $\Upsilon(Q')>\sup_{Q''\in \mathcal{D}}\Upsilon(Q'')$ (which is absurd).  Consequently, there are only finitely many candidates for a cube that dominates $Q$ from above.  To complete the proof of the claim, choose $\wt{Q}\in \dy$ to be a cube of largest side-length that dominates $Q$ from above.  Then $\wt{Q}\in \dyup$ (domination from above is transitive).

For each fixed $P\in \dyup$, consider those $Q\in \dy\backslash \dyup$ with $\I_{\mu}(Q) >0$ and $\wt{Q}=P$.  Then
\begin{equation}\begin{split}\nonumber\sum_{Q\in \dy\backslash \dyup: \, \widetilde{Q}=P}& \Upsilon(Q)^2\I_{\mu}(Q) = \sum_{m\geq 1} \sum_{\substack{Q\in \dy\backslash \dyup:\\\ell(Q)=2^{-m}\ell(P),\, \widetilde{Q}=P}} \Upsilon(Q)^2\I_{\mu}(Q)\\
&\leq \sum_{m\geq 1}2^{-2\eps  m}\Upsilon(P)^2\Bigl[\sum_{\substack{Q\in \dy:\\\ell(Q)=2^{-m}\ell(P), B_Q\subset \tfrac{1}{2}B_P}} \I_{\mu}(Q)\Bigl].
\end{split}\end{equation}
The sum in square brackets is bounded by $C\I_{\mu}(P)$ (as $\varphi_P\equiv 1$ on $\tfrac{1}{2}B_P$), and so by summing over $P\in \dyup$, we see that
\begin{equation}\nonumber \sum_{Q\in \dy\backslash \dyup}\Upsilon(Q)^2\mathcal{I}_{\mu}(Q)\leq C(\eps)\sum_{P\in\dyup}\Upsilon(P)^2\I_{\mu}(P),\end{equation}
and the lemma is proved.
\end{proof}

We shall make the choice
$$\Ups(Q) = \begin{cases}\beta_{\mu}(Q)D_{\mu}(Q) \text{ if }s\in \mathbb{Z},\\
\;\; D_{\mu}(Q) \text{ if } s\not\in \mathbb{Z}.\end{cases}
$$
With this function, denote by $\dyup(\mu)$ those cubes that cannot be dominated from above.

Notice that if $s\in \mathbb{Z}$, and $Q\in \dyup(\mu)$, then for every $Q'\in \dy$ with $\tfrac{1}{2}B_{Q'}\supset B_{Q}$,
\begin{equation}\label{uprestate}\beta_{\mu}(Q')D_{\mu}(Q')\leq2^{\eps[Q':Q]}\beta_{\mu}(Q)D_{\mu}(Q).\end{equation}
Provided that $\beta_{\mu}(Q)>0$, we readily  derive from this inequality  that whenever $\tfrac{1}{2}B_{Q'}\supset B_Q$
\begin{equation}\begin{split}\label{densbetadoub}
 &\Bigl(\frac{\ell(Q)}{\ell(Q')}\Bigl)^sD_{\mu}(Q) \leq  D_{\mu}(Q')\leq \Bigl(\frac{\ell(Q')}{\ell(Q)}\Bigl)^{s+2\eps}D_{\mu}(Q),\\
 &\;\;\;\;\;\;\;\;\;\;\text{and }\beta_{\mu}(Q')\leq \Bigl(\frac{\ell(Q')}{\ell(Q)}\Bigl)^{s+\eps}\beta_{\mu}(Q).
 \end{split}\end{equation}
 The right hand inequality in the first displayed formula perhaps deserves comment.  To see it, we plug the obvious inequality $$\sqrt{\I_{\mu}(Q')}\beta_{\mu}(Q') \geq \sqrt{\I_{\mu}(Q)}\beta_{\mu}(Q)$$ into (\ref{uprestate}) to find that
 $$\frac{\sqrt{\I_{\mu}(Q')}}{\ell(Q')^s} \leq 2^{\eps[Q':Q]}\frac{\sqrt{\I_{\mu}(Q)}}{\ell(Q)^s}.
 $$
 Rearranging this yields the desired inequality.\\

If instead $s\not\in \mathbb{Z}$ and $Q\in \dyup(\mu)$ then we have much better density control:
$$D_{\mu}(Q')\leq \Bigl(\frac{\ell(Q')}{\ell(Q)}\Bigl)^{\eps}D_{\mu}(Q) \text{ whenever }Q'\in \dy, \tfrac{1}{2}B_{Q'}\supset B_Q.
$$

For the remainder of the paper, let us fix $M$ so that Proposition \ref{lowdoubsparse} holds.  Our goal will be to prove the following alternative.

\begin{alt}\label{nonflatalt}  For each $\Lambda>4$ and $\alpha>0$,  there exist $A>0$ and $\Delta>0$ such that for every measure $\mu$ and cube $Q\in \dyup(\mu)$, with the additional properties that $\beta_{\mu}(Q)>0$ and $\alpha_{\mu}(\Lambda Q)\geq \alpha$ if $s\in \mathbb{Z}$, we have that either

(a)  $\displaystyle\mathcal{S}_{\mu}^A(Q)\geq \Delta D_{\mu}(Q)^2\mathcal{I}_{\mu}(Q)$

or

(b)  $Q$ is dominated from below by a bunch of cubes in $\dy_{M}(\mu)$.
\end{alt}



Before we prove the alternative, let us see how we shall use it.  Fix $\Lambda>0$ and $\alpha>0$.  For $s\in \mathbb{Z}$, set
$$\dyup^{\star}(\mu) = \{Q\in \dyup(\mu)\,:\, \alpha_{\mu}(\Lambda Q)\geq \alpha\text{ and }\beta_{\mu}(Q)>0\},$$
while for $s\not\in \mathbb{Z}$, set $\dyup^{\star}(\mu) = \dyup(\mu)$.

\begin{cor}\label{nonflataltcor} If $s\in \mathbb{Z}$, then there exists $\Delta>0$ and $A>0$, depending on $M,$ $\Lambda,$ $\alpha$ such that for every finite measure $\mu$ satisfying $\sup_{Q\in \dy}D_{\mu}(Q)<\infty$,
$$\sum_{Q\in\dyup^{\star}(\mu)} D_{\mu}(Q)^2\I_{\mu}(Q)\leq \frac{1}{\Delta}\sum_{Q\in \dy}\mathcal{S}_{\mu}^A(Q).
$$
If $s\not\in \mathbb{Z}$, then there exists $\Delta>0$ and $A>0$, depending on $M$, such that for every finite measure $\mu$ satisfying $\sup_{Q\in \dy}D_{\mu}(Q)<\infty$,
$$\sum_{Q\in\dyup(\mu)} D_{\mu}(Q)^2\mathcal{I}_{\mu}(Q)\leq \frac{1}{\Delta}\sum_{Q\in \dy}\mathcal{S}_{\mu}^A(Q).
$$
\end{cor}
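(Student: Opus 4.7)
The plan is to use Alternative \ref{nonflatalt} to partition $\dyup^{\star}(\mu)$ and then combine the two resulting contributions via Lemma \ref{stupidthing} and Proposition \ref{lowdoubsparse}. First I would check that every $Q \in \dyup^{\star}(\mu)$ meets the hypotheses of Alternative \ref{nonflatalt}: such a $Q$ lies in $\dyup(\mu)$ by definition, and in the integer case the additional requirements $\beta_{\mu}(Q) > 0$ and $\alpha_{\mu}(\Lambda Q) \geq \alpha$ are exactly what distinguishes $\dyup^{\star}(\mu)$ from $\dyup(\mu)$. For the fixed parameters $\Lambda,\alpha$ (and the $M$ coming from Proposition \ref{lowdoubsparse}), the alternative then furnishes constants $A_1, \Delta_1 > 0$ such that each $Q \in \dyup^{\star}(\mu)$ lies in the set $\mathcal{G}_a$ of cubes satisfying $\mathcal{S}_{\mu}^{A_1}(Q) \geq \Delta_1 D_{\mu}(Q)^2 \I_{\mu}(Q)$ or in the set $\mathcal{G}_b := \dyup^{\star}(\mu)\setminus \mathcal{G}_a$ of cubes dominated from below by a bunch of cubes in $\dy_{M}(\mu)$.

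The contribution of $\mathcal{G}_a$ is immediate: summing the pointwise estimate gives
\[
\sum_{Q \in \mathcal{G}_a} D_{\mu}(Q)^2 \I_{\mu}(Q) \leq \frac{1}{\Delta_1}\sum_{Q \in \mathcal{G}_a} \mathcal{S}_{\mu}^{A_1}(Q) \leq \frac{1}{\Delta_1} \sum_{Q \in \dy}\mathcal{S}_{\mu}^{A_1}(Q).
\]
For $\mathcal{G}_b$, the point is that by the very definition of domination from below, no cube in $\mathcal{G}_b$ belongs to $\gdown(\dy_{M}(\mu))$, so $\mathcal{G}_b \subset \dy \setminus \gdown(\dy_{M}(\mu))$. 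Applying Lemma \ref{stupidthing} with $\mathcal{G} = \dy$ and $\mathcal{G}' = \dy_{M}(\mu)$ I get
\[
\sum_{Q \in \mathcal{G}_b} D_{\mu}(Q)^2 \I_{\mu}(Q) \leq C(\eps) \sum_{Q \in \dy_{M}(\mu)} D_{\mu}(Q)^2 \I_{\mu}(Q),
\]
and Proposition \ref{lowdoubsparse}, which applies because $\sup_{Q \in \dy}D_{\mu}(Q)<\infty$, controls the right-hand side by $C \sum_{Q \in \dy}\mathcal{S}_{\mu}^{A_2}(Q)$ for some $A_2$ depending on $M$.

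To finish, I would choose $A = \max(A_1, A_2)$ and note that $\mathcal{S}_{\mu}^{A'}(Q)$ is monotone non-decreasing in $A'$, so the two bounds combine into a single estimate with the desired constant $\Delta > 0$. There is no real obstacle once Alternative \ref{nonflatalt} and Proposition \ref{lowdoubsparse} are in hand: the corollary is essentially the bookkeeping layer that welds them together. The only subtlety worth double-checking is that the $M$ in Proposition \ref{lowdoubsparse} is the same $M$ used in Alternative \ref{nonflatalt} (which it is, since $M$ was fixed once and for all after Proposition \ref{lowdoubsparse}), so no circularity arises when choosing constants. The non-integer case $s \notin \mathbb{Z}$ is handled in exactly the same way, with $\dyup^{\star}(\mu) = \dyup(\mu)$ and the $\alpha$-constraint vacuous.
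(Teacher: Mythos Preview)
Your proposal is correct and follows essentially the same approach as the paper: split $\dyup^{\star}(\mu)$ according to which branch of Alternative \ref{nonflatalt} applies, handle the (a)-cubes by summing the pointwise square-function lower bound (the paper phrases this as an instance of the general principle in Section \ref{principle}), and handle the (b)-cubes via Lemma \ref{stupidthing} followed by Proposition \ref{lowdoubsparse}. The only cosmetic differences are that the paper invokes Lemma \ref{stupidthing} with $\mathcal{G}=\dyup^{\star}(\mu)$ rather than your $\mathcal{G}=\dy$ (either choice works, since $\mathcal{G}_b$ sits inside $\mathcal{G}\setminus\gdown(\dy_M(\mu))$ in both cases), and you make explicit the monotonicity of $\mathcal{S}_{\mu}^{A'}(Q)$ in $A'$ needed to merge the two values of $A$.
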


\begin{proof}[Proof of Corollary \ref{nonflataltcor}]

One uses the general principle (\ref{bigsquarecoef}) to control the contribution of the sum over the cubes where alternative (a) occurs.  Indeed, making the choice $$\Gamma_{\mu}(Q) = \begin{cases}D_{\mu}(Q)^2 \text{ if }Q\in \dyup^{\star}(\mu)\text{ and alternative }(a) \text{ holds for }Q\\
\;\;0\text{ otherwise},\end{cases}$$
we get from (\ref{Gammasum}) that
$$\sum_{\substack{Q\in \dyup^{\star}(\mu):\\
\text{ alternative }(a) \text{ holds}}}D_{\mu}(Q)^2\I_{\mu}(Q)\leq \frac{1}{\Delta}\sum_{Q\in \dy}\mathcal{S}_{\mu}^A(Q).
$$
 For the cubes where alternative (b) holds, we apply  Lemma \ref{stupidthing} with $\mathcal{G}' = \dy_M(\mu)$ and $\mathcal{G} = \dyup^{\star}(\mu)$.  Since $$\{Q\in \dyup^{\star}(\mu):
\text{ alternative }(b) \text{ holds}\}\subset \mathcal{G}\backslash \mathcal{G}_{\text{down}}(\mathcal{G}'),$$ we infer that
$$\sum_{\substack{Q\in \dyup^{\star}(\mu):\\
\text{ alternative }(b) \text{ holds}}}D_{\mu}(Q)^2\mathcal{I}_{\mu}(Q) \leq C(\eps)\sum_{Q\in \dy_M(\mu)}D_{\mu}(Q)^2\mathcal{I}_{\mu}(Q).
$$
Proposition \ref{lowdoubsparse} ensures that the right hand side here is bounded by the sum of square function constituents.
\end{proof}

Notice that, in conjunction with Lemma \ref{upcubescontribute}, \emph{Corollary \ref{nonflataltcor} completes the proof of Theorem \ref{wolffthm}, and with it Theorem \ref{thm2}.}\\

We now move onto proving the alternative.

\begin{proof}[Proof of Alternative \ref{nonflatalt}] We (rather predictably) proceed by contradiction. If the alternative fails to hold, then for some $\Lambda>0$ and $\alpha>0$, and every $k\in \mathbb{N}$, we can find a measure $\wt\mu_k$ and a cube $Q_k\in \dyup^{\star}(\wt\mu_k)$ such that
$$\mathcal{S}_{\wt\mu_k}^k(Q_k)\leq \frac{1}{k}D_{\wt\mu_k}(Q_k)^2\I_{\wt\mu_k}(Q_k),
$$
but also $Q_k$ cannot be dominated from below by a bunch of cubes in $\dy_{M}(\wt\mu_k)$.

We consider the measure $\mu_k=\frac{\wt\mu_k(\mathcal{L}_{Q_k}(\,\cdot\,))}{\I_{\wt\mu_k}(Q_k)}$, which satisfies $D_{\mu_k}(Q_0)=1$.  The preimage of the lattice $\dy$ under $\mathcal{L}_{Q_k}$ is some lattice $\dy^{(k)}$ with $Q_0\in \dy^{(k)}$.  Moreover, $Q_0\in \dyup^{(k)\,\star}(\mu_k)$, and so from (\ref{densbetadoub}) we have that
$$D_{\mu_k}(Q')\leq C\ell(Q')^{s+1} \text{ whenever }\frac{1}{2}B_{Q'}\supset B_{Q_0}.
$$

This polynomial growth bound allows us to apply Lemma \ref{genconv} and pass to a subsequence of the measures that converges weakly to a $\varphi$-symmetric measure $\mu$ with $D_{\mu}(Q_0)= \I_{\mu}(Q_0)=1$.  With the passage to a further subsequence, we assume that the lattices $\dy^{(k)}$ stabilize in some lattice $\dy'$.

We first suppose that $\supp(\mu)$ is not contained in an $\lis$-plane.  Then, since $\mu(B(0, R))\leq CR^{2s+2\eps}$ for large $R>0$, we may apply Proposition \ref{structureprop}, and find that there exists a cube $Q'\in \dy'$ with $\frac{1}{2}B_{Q'}\supset B_{Q_0}$ of arbitrarily large side-length we have that $\beta_{\mu}(Q')> c^{\star}$ and $D_{\mu}(Q')> \ell(Q')^{\lis+1-s-\eps}$, where $c^{\star}>0$ depends only on $d$, $s$, and $\|\varphi'\|_{\infty}$.  But Lemma \ref{upapprox} then ensures that for sufficiently large $k$, we have that $Q'\in \dy^{(k)}$, $\beta_{\mu_k}(Q')> c^{\star}$, and $D_{\mu_k}(Q')> \ell(Q')^{\lis+1-s-\eps}D_{\mu_k}(Q_0)$.  Provided that $c^{\star}\ell(Q')^{\lis +1-s-\eps}>4\sqrt{d}\cdot \ell(Q')^{\eps}\; (\geq \beta_{\mu_k}(Q_0)2^{\eps[Q':Q_0]})$, this contradicts the fact that $Q_0\in \dyupk(\mu_k)$, and such a contradictory choice of $\ell(Q')$ is possible since $2\eps<1+\lis-s$.  Thus $\supp(\mu)\subset L$ for some $\lis$-plane $L$ (which must intersect $B_{Q_0}$).

Our next claim is that $\supp(\mu)$ is $n$-rectifiable, with $n=\uis -1$.  This is already proved in the case when $s\not\in \mathbb{Z}$, as $\mu$ is supported in an $n$-plane in this case.  If $s\in \mathbb{Z}$, then we notice that Proposition \ref{structureprop} guarantees that either $\mu=c\mathcal{H}^{s}_L$, or that $\supp(\mu)$ is $n$-rectifiable.  But the first case is ruled out since $\alpha_{\mu}(\Lambda Q_0)>0$ (note that $\Lambda>4$), so we must indeed have that $\supp(\mu)$ is $n$-rectifiable.

  Consequently, we may apply Lemma \ref{downapprox} with $\kap\leq 2^{-M}$ to find a finite collection of cubes $Q_j$, each of sidelength less that $2^{-M}$, such that the balls $3B_{Q_j}$ are disjoint, $3B_{Q_j}\subset 3B_{Q_0}$, and for all sufficiently large $k$ we have, for every $j$, $$D_{\mu_k,n+\delta}(Q')\leq D_{\mu_k,n+\delta}(Q_j)$$ whenever $Q'\in \dy^{(k)}$ with $B_{Q_j}\subset B_{Q'}\subset 300B_{Q_0}$ (this property is weaker than property (4) of the conclusion of Lemma \ref{downapprox}).  In particular this ensures that each $Q_j$ lies in ${\dy^{(k)}}_{M}(\mu_k)$.

On the other hand, by choosing $\kap$ smaller if necessary, we conclude from Lemma \ref{downdomlem} that the bunch $Q_j$ dominates $Q_0$ from below.  This contradicts the fact that $Q_0$ cannot be dominated from below by a finite bunch of cubes from ${\dy^{(k)}}_M(\mu_k)$, and with this final contradiction we complete the proof of the alternative.\end{proof}

\section{The reduction to one last square function estimate}

For the remainder of the paper, we restrict our attention to proving Theorem \ref{jonesthm}, so we shall henceforth assume that  $s\in \mathbb{Z}$. It remains to show that there exist constants $\alpha>0$, $\Lambda>4$, $A>1$ and $C>0$, such that if $\mu$ is a finite measure satisfying $\sup_{Q\in \dy}D_{\mu}(Q)<\infty$, then
$$\sum_{Q\in\dyup(\mu),\, \alpha_{\mu}(\Lambda Q)\leq \alpha} \beta_{\mu}(Q)^2 D_{\mu}(Q)^2\mathcal{I}_{\mu}(Q)\leq C\sum_{Q\in \dy} \mathcal{S}_{\mu}^A(Q).
$$
When combined with Corollary \ref{nonflataltcor}, this would show that (with a possibly larger constant $A$),
$$\sum_{Q\in\dyup(\mu)} \beta_{\mu}(Q)^2 D_{\mu}(Q)^2\mathcal{I}_{\mu}(Q)\leq C\sum_{Q\in \dy} \mathcal{S}_{\mu}^A(Q).
$$
Then Theorem \ref{jonesthm} follows from Lemma \ref{upcubescontribute}.  Following the general principle (\ref{bigsquarecoef}) with the choice
\begin{equation}\nonumber\Gamma_{\mu}(Q) = \begin{cases}\beta_{\mu}(Q)^2D_{\mu}(Q)^2 \text{ if }Q\in \dyup(\mu)\text{ satisfies }\alpha_{\mu}(\Lambda Q)\leq \alpha\\\;0\text{ otherwise},\end{cases}\end{equation} it will suffice to demonstrate the following proposition:

\begin{prop}\label{flatcoef} There exist $\Lambda>0$, $\alpha>0$, $A>1$, and $\Delta>0$ such that for every measure $\mu$ and $Q\in \dyup(\mu)$ satisfying $\alpha_{\mu}(\Lambda Q)\leq \alpha$ and $\beta_{\mu}(Q)>0$, we have \begin{equation}\label{betasquarefn}\mathcal{S}_{\mu}^A(Q)\geq \Delta\beta_{\mu}(Q)^2 D_{\mu}(Q)^2\mathcal{I}_{\mu}(Q).\end{equation}
\end{prop}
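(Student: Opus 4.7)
The plan is a blow-up argument by contradiction, combining the classification of $\varphi$-symmetric measures (Proposition~\ref{structureprop}) with an explicit computation of $T_t$ against a flat $s$-plane measure. Suppose \eqref{betasquarefn} fails. Fixing $\Lambda$ large and choosing sequences $\alpha_k \downarrow 0$, $A_k \uparrow \infty$, $\Delta_k \downarrow 0$, we extract counterexamples $(\mu_k, Q_k)$ with $Q_k \in \dyup(\mu_k)$, $\alpha_{\mu_k}(\Lambda Q_k) \leq \alpha_k$, $\beta_k := \beta_{\mu_k}(Q_k) > 0$, and $\mathcal{S}_{\mu_k}^{A_k}(Q_k) < \Delta_k\beta_k^2 D_{\mu_k}(Q_k)^2 \I_{\mu_k}(Q_k)$. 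Pulling back via $\mathcal{L}_{Q_k}^{-1}$ and rescaling so that $\I_{\wt\mu_k}(Q_0) = D_{\wt\mu_k}(Q_0) = 1$, the $\dyup$-membership yields $\wt\mu_k(B(0, R)) \lesssim R^{2s+2\eps}$ for $R \geq 1$, and plugging $(\dist(\,\cdot\,, L)/\ell(\Lambda Q_0))^2 \varphi_{3\Lambda Q_0}$ into the $\alpha$-inequality gives the auxiliary bound $\beta_k^2 \leq C\Lambda^2 \alpha_k$.

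After passing to a weakly convergent subsequence $\wt\mu_k \wlim \wt\mu$, the estimate $\mathcal{S}_{\wt\mu_k}^{A_k}(Q_0) < \Delta_k\beta_k^2 \lesssim \Delta_k\Lambda^2\alpha_k \to 0$ together with Lemma~\ref{genconv} makes $\wt\mu$ $\varphi$-symmetric, while Lemma~\ref{geomconv} and $\alpha_k \to 0$ force $\alpha_{\wt\mu}(\Lambda Q_0) = 0$; hence $\wt\mu = \vartheta\mathcal{H}^s|_L$ on $3B_{\Lambda Q_0}$ for some $s$-plane $L$. Proposition~\ref{structureprop} applied with $n = s$ then rules out the non-flat alternative~(1) (which would force $D_{\wt\mu, s+1-\eps}$ to blow up faster than the polynomial growth permits) and the $(s-1)$-rectifiable subcase of~(2) (which would force $\wt\mu \equiv 0$ on a neighborhood of $Q_0$, since $\mathcal{H}^s$ vanishes on $(s-1)$-rectifiable sets, contradicting $\I_{\wt\mu}(Q_0) = 1$). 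Hence $\wt\mu = c\mathcal{H}^s|_L$ globally, and in particular $\beta_k \to 0$.

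The key quantitative input is the radial identity
\begin{equation*}
T_t(\mathcal{H}^s|_L)(x) = \frac{x - \pi_L x}{t}\, F\!\left(\frac{\dist(x, L)}{t}\right), \qquad F(\rho) := \int_{\R^s}\varphi\bigl(\sqrt{|u|^2 + \rho^2}\bigr)\, du,
\end{equation*}
with $F(0) > 0$, which gives $\int_{1/A}^{A} |T_t(\mathcal{H}^s|_L)(x)|^2 \tfrac{dt}{t} \gtrsim A^2\,\dist(x, L)^2$ for $x$ near $L$ and $A \geq 2$. Decomposing $\wt\mu_k = \vartheta_k\mathcal{H}^s|_{L_k} + \nu_k$ via the optimal $\alpha$-plane $L_k$ and using $|T_t\wt\mu_k|^2 \geq \tfrac{1}{2}\vartheta_k^2 |T_t\mathcal{H}^s|_{L_k}|^2 - |T_t\nu_k|^2$ integrated against $\wt\mu_k$ yields, for any fixed $A \geq 2$,
\begin{equation*}
\mathcal{S}_{\wt\mu_k}^A(Q_0) \geq c(\Lambda, A)\,\beta_k^2 - C(\Lambda, A)\,\alpha_k^2.
\end{equation*}
The main term uses $\int_{B_{Q_0}} \dist(x, L_k)^2 \varphi_{Q_0}\, d\wt\mu_k \geq \beta_k^2$ (infimum definition of $\beta_k$ applied to the plane $L_k$), while the error follows from the Lipschitz-dual bound $|T_t\nu_k(x)| \leq C(A, \Lambda)\,\alpha_k$, obtained by plugging each component of $y \mapsto \tfrac{x - y}{t^{s+1}}\varphi(|x - y|/t)$ into the $\alpha$-inequality (Lipschitz norm $\lesssim 1/t^{s+1}$, supported in $3B_{\Lambda Q_0}$ once $\Lambda \gtrsim A$). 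Combining with $\mathcal{S}_{\wt\mu_k}^A(Q_0) \leq \mathcal{S}_{\wt\mu_k}^{A_k}(Q_0) < \Delta_k\beta_k^2$, this forces $\beta_k \leq K(A, \Lambda)\,\alpha_k$.

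The main obstacle is the regime $\beta_k \lesssim \alpha_k$: there the quadratic error $\alpha_k^2$ can swamp the main term $\beta_k^2$, and the single blow-up above only delivers $\beta_k = O(\alpha_k)$ rather than an outright contradiction. I expect this to be resolved by a second compactness step at the correct scale: rescaling $\nu_k$ so that its perpendicular-to-$L_k$ second moment becomes of unit order, transferring the approximate $\varphi$-symmetry and the $\alpha$-closeness to the rescaled limit, and invoking Proposition~\ref{structureprop} once more to force the perpendicular contribution in the limit to vanish, contradicting the strict positivity $\beta_k > 0$ built into the normalization.
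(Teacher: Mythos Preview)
Your first blow-up and the identification of the limit as $c\mathcal{H}^s|_L$ are essentially correct, and the explicit computation of $T_t(\mathcal{H}^s|_L)$ is fine. However, the conclusion $\beta_k\lesssim\alpha_k$ is where your argument stalls, and the proposed ``second compactness step'' does not close the gap as written.

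The concrete problem is your appeal to Proposition~\ref{structureprop} for the rescaled limit. After you stretch perpendicularly to $L_k$ by $1/\beta_k$, the limit measure is \emph{not} $\varphi$-symmetric in the sense of Section~4; the square function smallness only transfers to a ``cylindrical'' identity of the form
\[
\int (x'-y')\,\varphi\!\Bigl(\frac{|x''-y''|}{t}\Bigr)\,d\nu(y)=0,
\]
and Proposition~\ref{structureprop} says nothing about such measures. Moreover, before you can stretch at all you need to know that $\mu_k$ is genuinely concentrated in an $O(\beta_k)$-neighbourhood of $L_k$; the $\beta$-number alone only controls a second moment, and without a pruning step the stretched sequence need not have a useful limit.

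The paper's route is structurally different from yours. It first uses the pruning alternative (Corollary~\ref{prunalt}) and the smallness of $\mathcal{S}^A$ to force $\mu_k$ to concentrate in $(L_k)_{C\beta_k R}$ up to an error $O(1/k)$. After stretching, the cylindrical symmetry together with the $\alpha$-smallness (which forces the $L_k$-projection of $\nu$ to be Lebesgue) shows that $\supp(\nu)$ is the graph of a \emph{harmonic} function $u:B^s(0,R/2)\to\R^{d-s}$. Interior estimates for harmonic functions (Lemma~\ref{harmpoly}) then give $\beta_{\mu_k}(Q_0)\leq (C/R)\,\beta_{\mu_k}(\wh Q_0)$ for a cube $\wh Q_0$ of sidelength $\sim R$. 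This is the crucial step: it contradicts the $\dyup$ inequality $\beta_{\mu_k}(\wh Q_0)D_{\mu_k}(\wh Q_0)\leq CR^{\eps}\beta_{\mu_k}(Q_0)D_{\mu_k}(Q_0)$ once $R$ (hence $\Lambda$) is large. Your argument never uses the $\dyup$ hypothesis in this two-scale way, and that comparison is precisely what resolves the $\beta_k\lesssim\alpha_k$ regime.
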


Notice here that the $\beta$-number is present on the right hand side of (\ref{betasquarefn}).  It is not possible to estimate the square function coefficient in terms of the density alone (i.e., (\ref{betasquarefn}) couldn't possibly be true in general if one removes the $\beta_{\mu}(Q)^2$ term on the right hand side), as $\mu$ may well be the $s$-dimensional Hausdorff measure associated to an $s$-plane, in which case the left hand side of (\ref{betasquarefn}) equals to zero.

\section{The pruning lemma}


For an $n$-plane $L$ and $\beta>0$, $L_{\beta} = \{x\in \R^d: \dist(x, L)\leq \beta\}$ denotes the closed $\beta$-neighbourhood of $L$.

\begin{lem}\label{prunlem}  Let $R>0$.  Fix a measure $\mu$ with $\mu(B(0,R))>0$.  Suppose that for some hyperplane $H$ and $\beta>0$, we have
$$\frac{1}{\mu(B(0,R))}\int_{B(0,10R)}\Bigl(\frac{\dist(x,H)}{R}\Bigl)^2d\mu(x)\leq \beta^2.
$$
Then
\begin{equation}\begin{split}&\Bigl(\frac{\mu(B(0,R))}{R^s}\Bigl)^2\int_{B(0, 2R)\backslash H_{3\beta R}}\Bigl(\frac{\dist(x,H)}{R}\Bigl)^2d\mu(x)\\&\;\;\;\leq C\int_{B(0, 2R)}\int_{3R}^{4R}\Bigl|\int_{\R^d}\frac{x-y}{t^{s+1}}\varphi \Bigl(\frac{x-y}{t}\Bigl)d\mu(y)\Bigl|^2\frac{dt}{t}d\mu(x).
\end{split}\end{equation}
\end{lem}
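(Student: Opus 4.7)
The plan is to lower-bound the inner vector integral by its projection onto the unit normal of $H$, and then exploit a precise cancellation of two uses of the hypothesis. Fix a unit normal $\mathbf{n}$ to $H$ and a basepoint $x_H\in H$; set $h(x)=\langle x-x_H,\mathbf{n}\rangle$, so $\dist(x,H)=|h(x)|$ and $\langle x-y,\mathbf{n}\rangle=h(x)-h(y)$. Since $|v|^2\ge \langle v,\mathbf{n}\rangle^2$ for any vector $v$, it suffices to bound from below the scalar
\[
I(x,t)=\tfrac{1}{t^{s+1}}\int_{\R^d}(h(x)-h(y))\,\varphi\bigl(\tfrac{|x-y|}{t}\bigr)\,d\mu(y)
=\tfrac{\I_{\mu}(B(x,t))}{t^{s+1}}\bigl(h(x)-\bar h(x,t)\bigr),
\]
where $\bar h(x,t)$ is the $\varphi(|x-\cdot|/t)$-weighted average of $h$.

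I would first record two elementary consequences of the geometric setup: for $x\in B(0,2R)$ and $t\in[3R,4R]$, the function $y\mapsto\varphi(|x-y|/t)$ is supported in $B(0,10R)$, and moreover $B(0,R)\subset B(x,t)$, so $\I_{\mu}(B(x,t))\ge\mu(B(0,R))$. Applying Cauchy--Schwarz with weight $\varphi(|x-\cdot|/t)$ to the numerator of $\bar h(x,t)$, and invoking the hypothesis of the lemma, gives
\[
\I_{\mu}(B(x,t))\,\bar h(x,t)^2\;\le\;\int_{B(0,10R)}h(y)^2\,d\mu(y)\;\le\;\beta^2 R^2\,\mu(B(0,R)).
\]
Dividing by $\I_{\mu}(B(x,t))\ge\mu(B(0,R))$ yields the pointwise bound $|\bar h(x,t)|\le \beta R$. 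This is the crux of the argument; its neatness is the exact cancellation between the factor $\I_{\mu}(B(x,t))^{-1/2}$ produced by Cauchy--Schwarz and the lower bound $\I_{\mu}(B(x,t))\ge \mu(B(0,R))$ produced by $B(x,t)\supset B(0,R)$. The main (mild) obstacle would be trying to estimate the error term directly in $L^2(d\mu(x))$, which introduces an uncontrolled factor of $\mu(B(0,10R))$; separating the two uses of Cauchy--Schwarz pointwise in $x$ is what avoids this.

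To finish, for $x\in B(0,2R)\setminus H_{3\beta R}$ one has $|h(x)|\ge 3\beta R$, so $|h(x)-\bar h(x,t)|\ge |h(x)|-\beta R\ge \tfrac{2}{3}\dist(x,H)$ (this is where the factor $3$ in the thickening $H_{3\beta R}$ is used). Combined with $\I_{\mu}(B(x,t))\ge \mu(B(0,R))$ and $t\le 4R$, this gives the pointwise estimate
\[
I(x,t)^2\;\ge\; c\,\frac{\mu(B(0,R))^2}{R^{2(s+1)}}\,\dist(x,H)^2.
\]
Integrating in $t\in[3R,4R]$ against $\tfrac{dt}{t}$ contributes a harmless $\log(4/3)$, and integrating the result over $x\in B(0,2R)\setminus H_{3\beta R}$ against $d\mu(x)$ exactly reproduces the weight $R^{-2}\dist(x,H)^2$ on the left-hand side of the conclusion, completing the proof.
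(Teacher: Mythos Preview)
Your proof is correct, and the overall shape---project onto the unit normal $\mathbf{n}$ of $H$, then show the resulting scalar integral is bounded below by a constant times $\dist(x,H)\cdot\mu(B(0,R))$---matches the paper's. The difference lies in how that lower bound is obtained. The paper, after normalizing, splits the integral $\int(z_x-z_y)\varphi(|x-y|/t)\,d\mu(y)$ into the regions $\{z_y<2\beta\}$ and $\{z_y>z_x\}$; Chebyshev's inequality shows the first region carries most of the mass of $B(0,R)$ and hence contributes at least $z_x/4$, while the hypothesis directly bounds the second piece by $\beta/3$. You instead rewrite the integral as $\I_\mu(B(x,t))\bigl(h(x)-\bar h(x,t)\bigr)$ and control $|\bar h(x,t)|\le\beta R$ in one stroke via Cauchy--Schwarz (equivalently Jensen) against the probability measure $\varphi(|x-\cdot|/t)\,d\mu/\I_\mu(B(x,t))$, combined with $\I_\mu(B(x,t))\ge\mu(B(0,R))$. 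Your route is tidier: it avoids the sign case-split and the region decomposition, and it makes transparent why the constant $3$ in $H_{3\beta R}$ suffices (any constant strictly larger than $1$ would do in your argument, whereas the paper's splitting constants are more entangled). The paper's approach, on the other hand, is slightly more robust to variants where one might want an $L^1$ rather than $L^2$ hypothesis, since it does not pass through a second moment.
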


\begin{proof}
We may assume that $R=1$ and $\mu(B(0,R))=1$.  Suppose that $H = b+ e^{\perp}$ for $b\in \R^d$ and $e\in \R^d$ with $|e|=1$, and for $x\in \R^d$ set $z_x = \langle x-b,e\rangle$.  Then
$$\Bigl|\int_{\R^d}(x-y)\varphi\Bigl(\frac{|x-y|}{t}\Bigl)d\mu(y)\Bigl|\geq \Bigl|\int_{\R^d}(z_x-z_y)\varphi\Bigl(\frac{|x-y|}{t}\Bigl)d\mu(y)\Bigl|.
$$
Fix $x\in B(0, 2)$ with $|z_x| = \dist(x,H)>3\beta$.  We will assume that $z_x>3\beta$.  Then
\begin{equation}\begin{split}\label{splittx}\int_{\R^d}\!\!(z_x-z_y)\varphi\Bigl(\frac{|x-y|}{t}\Bigl)d\mu(y)&\geq\!\! \int_{\{z_y< 2\beta\}}\!\!\!(z_x-z_y)\varphi\Bigl(\frac{|x-y|}{t}\Bigl)d\mu(y)\\
&-\int_{\{z_y>z_x\}}\!\!\!(z_y-z_x)\varphi\Bigl(\frac{|x-y|}{t}\Bigl)d\mu(y).
\end{split}\end{equation}
Notice that $\mu(\{y\in B(0,1): z_y\geq 2\beta\}) \leq \frac{1}{4\beta^2}\int_{B(0,1)}z_y^2d\mu \leq \frac{1}{4}$.  Consequently, if $t\in (3,4)$, we get that the first integral appearing on the right hand side of (\ref{splittx}) is at least $\frac{z_x}{3}\mu(B(0,1)\cap \{z_y<2\beta\})\geq \frac{z_x}{4}.$  On the other hand, the second integral on the right hand side of (\ref{splittx}) is at most
$$\int_{B(0,10)\cap\{z_y>3\beta\}}|z_y|d\mu(y)\leq \frac{1}{3\beta}\int_{B(0,10)\cap\{z_y>3\beta\}}|z_y|^2d\mu(y)\leq\frac{\beta}{3}\leq \frac{z_x}{9}.
$$
Thus
$$\Bigl|\int_{\R^d}(x-y)\varphi\Bigl(\frac{|x-y|}{t}\Bigl)d\mu(y)\Bigl|\geq \frac{|z_x|}{8}.
$$
It is easy to see that the conclusion also holds when $z_x<-3\beta$.  Squaring this inequality and integrating it yields,
\begin{equation}\begin{split}\nonumber\int_{B(0,2)\backslash H_{3\beta}}&|z_x|^2d\mu(x)\\
& \leq C \int_{B(0,2)}\int_3^4\Bigl|\int_{\R^d}\frac{x-y}{t^{s+1}}\varphi\Bigl(\frac{|x-y|}{t}\Bigl)d\mu(y)\Bigl|^2\frac{dt}{t}d\mu(x),
\end{split}\end{equation}
as required.
\end{proof}

We shall use this lemma as an alternative:

\begin{cor}[The Pruning  Alternative]\label{prunalt}  Fix a measure $\mu$ satisfying $\mu(B(0,R))>0$.  Fix $\Delta>0$.  Suppose that, for some $s$-plane $L$, and $R>0$, $$\frac{1}{\mu(B(0,R))}\int_{B(0,10R)}\Bigl(\frac{\dist(x,L)}{R}\Bigl)^2d\mu(x)\leq \beta^2.$$  Then, we have that either
\begin{equation}\begin{split}\nonumber\int_{B(0, 2R)}\int_{3R}^{4R}\Bigl|\int_{\R^d}&\frac{x-y}{t^{s+1}}\varphi\Bigl(\frac{x-y}{t}\Bigl)d\mu(y)\Bigl|^2\frac{dt}{t}d\mu(x)\\&\geq \Delta \beta^2\Bigl(\frac{\mu(B(0,R))}{R^s}\Bigl)^2\mu(B(0,R)),
\end{split}\end{equation}
or
$$\int_{B(0,2R)\backslash L_{3\beta(d-s) R}}\Bigl(\frac{\dist(x,L)}{R}\Bigl)^2d\mu(x)\leq C\Delta\beta^2\mu(B(0,R)).
$$
\end{cor}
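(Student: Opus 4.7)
The plan is to reduce from $s$-planes to hyperplanes and invoke Lemma \ref{prunlem} for each coordinate direction. Let $e_1,\dots,e_{d-s}$ be an orthonormal basis of the orthogonal complement of the linear subspace parallel to $L$, and fix any $x_0\in L$. The affine hyperplanes $H_i=\{x\in \R^d:\langle x-x_0,e_i\rangle=0\}$, $i=1,\dots,d-s$, all contain $L$, and the Pythagorean identity gives
$$\dist(x,L)^2=\sum_{i=1}^{d-s}\dist(x,H_i)^2.$$
In particular $\dist(x,H_i)\le \dist(x,L)$ for every $i$, so the hypothesis of the corollary passes verbatim to each $H_i$, and Lemma \ref{prunlem} is applicable.

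The next step is a dichotomy on the square function integral
$$S:=\int_{B(0,2R)}\int_{3R}^{4R}\Bigl|\int_{\R^d}\frac{x-y}{t^{s+1}}\varphi\Bigl(\frac{x-y}{t}\Bigl)d\mu(y)\Bigl|^2\frac{dt}{t}d\mu(x),$$
which does not depend on the choice of $H_i$. If $S\ge \Delta\beta^2(\mu(B(0,R))/R^s)^2\mu(B(0,R))$, then the first alternative of the corollary holds and we are done. Otherwise, combining this upper bound on $S$ with the conclusion of Lemma \ref{prunlem} applied to each $H_i$, and cancelling the common positive factor $(\mu(B(0,R))/R^s)^2$, yields for each $i=1,\dots,d-s$,
$$\int_{B(0,2R)\setminus (H_i)_{3\beta R}}\Bigl(\frac{\dist(x,H_i)}{R}\Bigl)^2 d\mu(x)\le C\Delta\beta^2\mu(B(0,R)).$$

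To reassemble back to the $s$-plane $L$, I would use the Pythagorean identity pointwise on $B(0,2R)\setminus L_{3\beta(d-s)R}$ and split each summand $\dist(x,H_i)^2$ depending on whether $\dist(x,H_i)>3\beta R$ or not. Summed over $i$ and integrated, the large parts are controlled by the previous display (at the cost of a factor $d-s$), while the small parts contribute at most $9(d-s)\beta^2R^2\cdot\mu(B(0,2R)\setminus L_{3\beta(d-s)R})$.

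The main (and essentially only) technical point is to absorb this last error term, and here the width $3\beta(d-s)R$ of the excluded neighbourhood has been chosen exactly so that this works out: since $\dist(x,L)>3\beta(d-s)R$ on $B(0,2R)\setminus L_{3\beta(d-s)R}$, Chebyshev's inequality against the hypothesis gives
$$\mu(B(0,2R)\setminus L_{3\beta(d-s)R})\le \frac{1}{9\beta^2(d-s)^2R^2}\int_{B(0,10R)}\dist(x,L)^2\, d\mu(x) \le \frac{\mu(B(0,R))}{9(d-s)^2},$$
so, after dividing by $R^2$, the error is at most $\beta^2\mu(B(0,R))/(d-s)$. Treating $\Delta$ as a positive parameter (and observing that if $\Delta<1$ we may freely replace it by $1$, which only makes the first alternative easier to satisfy), this error is absorbed into $C\Delta\beta^2\mu(B(0,R))$, establishing the second alternative.
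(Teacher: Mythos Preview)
Your decomposition into hyperplanes and the appeal to Lemma \ref{prunlem} for each $H_i$ matches the paper's approach, and everything is fine up to and including the display
$$\int_{B(0,2R)\setminus (H_i)_{3\beta R}}\Bigl(\frac{\dist(x,H_i)}{R}\Bigl)^2 d\mu(x)\le C\Delta\beta^2\mu(B(0,R)).$$
The gap is in your reassembly step. Your ``small parts'' error term has size $\beta^2\mu(B(0,R))/(d-s)$ with \emph{no factor of $\Delta$}, and this cannot be absorbed into $C\Delta\beta^2\mu(B(0,R))$ when $\Delta$ is small. Your proposed fix---replacing $\Delta$ by $1$---is backwards: raising $\Delta$ to $1$ makes the \emph{first} alternative \emph{harder} to satisfy (the threshold goes up), and even if the corollary held for $\Delta=1$, the second alternative with $C\cdot 1$ does not imply the second alternative with $C\Delta$ for $\Delta<1$. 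The corollary is in fact applied later in the paper with $\Delta\to 0$, so the $\Delta$ on the right-hand side of the second alternative is essential.

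The paper sidesteps this error term entirely by a pigeonhole observation instead of a Pythagorean split: since $\dist(x,L)\le\sum_j\dist(x,H^{(j)})$, for each $x\notin L_{3\beta(d-s)R}$ there is some index $j$ (e.g.\ the one maximizing $\dist(x,H^{(j)})$) with both $\dist(x,H^{(j)})>3\beta R$ and $\dist(x,L)\le (d-s)\dist(x,H^{(j)})$. Assigning each such $x$ to one such $j$ and summing gives
$$\int_{B(0,2R)\setminus L_{3\beta(d-s)R}}\Bigl(\frac{\dist(x,L)}{R}\Bigl)^2 d\mu(x)\le (d-s)^2\sum_{j}\int_{B(0,2R)\setminus (H^{(j)})_{3\beta R}}\Bigl(\frac{\dist(x,H^{(j)})}{R}\Bigl)^2 d\mu(x),$$
and now the right-hand side is genuinely $\le C\Delta\beta^2\mu(B(0,R))$ with no leftover error. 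Replacing your large/small split by this pigeonhole fixes the argument.
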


Suppose $L = b+\{v_{d-s+1},\dots, v_d\}^{\perp}$ for an orthonormal set of vectors $v_{d-s+1}, \dots, v_d$.  One derives the corollary by applying Lemma \ref{prunlem} to the collection of $d-s$ hyperplanes $H^{(1)} = b+\{v_{d-s+1}\}^{\perp},\,\dots\,,H^{(d-s)} = b+\{v_{d}\}^{\perp}$, whose intersection is $L$.  One merely needs to notice that, on the one hand, for each $j\in \{1,\dots,d-s\}$, we have $\dist(\,\cdot\, , H^{(j)}) \leq \dist(\,\cdot\, , L)$.  But on the other hand $\dist(\,\cdot\, , L)\leq \sum_{j}\dist(\,\cdot\, , H^{(j)})$, and so for each $x\not\in L_{3\beta (d-s)R}$ there is some $j$ such that $x\not\in H^{(j)}_{3\beta R}$ and moreover $\dist(x,L)\leq (d-s)\dist(x, H^{(j)})$.

\section{The cylinder blow-up argument: the conclusion of the proof of Proposition \ref{flatcoef}}

We shall work in the following parameter regime: Fix $\Lambda \gg 1$ to be chosen later, and let $\alpha\rightarrow 0$, $\Delta\to 0$, and $A\to\infty$.

Suppose that, for each $k\in \mathbb{N}$, there is a measure $\wt\mu_k$, a cube $Q_k\in \dyup(\wt\mu_k)$ such that $\alpha_{ \wt\mu_k}(\Lambda Q_k)\leq \frac{1}{k}$, $\beta_{\wt\mu_k}(Q_k)>0$, and
$$\mathcal{S}_{\wt\mu_k}^k(Q_k)\leq \frac{1}{k}\beta_{\wt\mu_k}(Q_k)^2D_{\wt\mu_k}(Q_k)^2\I_{\wt\mu_k}(Q_k).
$$
Proposition \ref{flatcoef} will follow if we deduce a contradiction for some sufficiently large $\Lambda>0$.

Consider the measure $\mu_k = \frac{\wt\mu_k(\mathcal{L}_k\cdot)}{\I_{\wt\mu_k}(Q_k)}$.  Then $D_{\mu_k}(Q_0)=\mathcal{I}_{\mu_k}(Q_0)=1$.  The preimage of  $\dy$ under $\mathcal{L}_k$ is some lattice $\dy^{(k)}$ containing $Q_0$.  Passing to a subsequence we may assume that the lattices $\dy^{(k)}$ stabilize in some lattice $\dy'$.  Also observe that
\begin{equation}\label{smallsquare2}
\mathcal{S}_{\mu_k}^k(Q_0)\leq \frac{1}{k}\beta_{\mu_k}(Q_0)^2.
\end{equation}

Inasmuch as $Q_0\in \dyupk(\mu_k)$ and $\beta_{\mu_k}(Q_0)>0$, we infer from (\ref{densbetadoub})  that for any $N\geq 1$
\begin{equation}\label{easygrowth}D_{\mu_k}(NQ_0)\leq CN^{s+2\eps}.
\end{equation}

Notice that, since $\alpha_{\mu_k}(\Lambda Q_0)\leq \frac{1}{k}$, we have 
$\beta_{\mu_k}(\Lambda Q_0)\leq \tfrac{C}{\sqrt{k}}$.  From this and (\ref{easygrowth}) we find that $\beta_{\mu_k}(Q_0)\to 0$ as $k\to\infty$.

\subsection{Good density bounds for medium sized cubes containing $Q_0$}  In this section we shall prove the following result.

\begin{lem}\label{densityconstantlem}  There exists $C>0$ such that  if $Q'\in \dy^{(k)}$  with $\frac{\Lambda}{2}B_{Q_0}\supset \frac{1}{2}B_{Q'}\supset B_{Q_0}$, then
\begin{equation}\label{densityconstant}
\frac{1}{C}\leq D_{\mu_k}(Q')\leq C.
\end{equation}
(Here $C$ depends only on $d$ and $s$.)
\end{lem}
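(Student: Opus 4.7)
The argument is a blow-up/compactness proof exploiting the two quantitative ingredients of the setup: the vanishing transportation coefficient $\alpha_{\mu_k}(\Lambda Q_0)\le 1/k$ and the normalization $\I_{\mu_k}(Q_0)=1$. By the growth bound (\ref{easygrowth}), $\sup_k\mu_k(B(0,R))<\infty$ for every fixed $R$, so after passing to a subsequence $\mu_k\wlim\mu_\infty$ for some limit $\mu_\infty$ with $\I_{\mu_\infty}(Q_0)=1$; by Section \ref{stabilize} we may also assume the lattices $\dy^{(k)}$ stabilize in some $\dy'$. Lemma \ref{geomconv} (applied on the cube $\Lambda Q_0$, for which $\I_{\mu_\infty}(\Lambda Q_0)\ge \I_{\mu_\infty}(Q_0)=1>0$) forces $\alpha_{\mu_\infty}(\Lambda Q_0)=0$. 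Unfolding this, and using density of $\Lip_0(3B_{\Lambda Q_0})$ in $C_0(3B_{\Lambda Q_0})$, produces an $s$-plane $L$ with $L\cap\tfrac14 B_{\Lambda Q_0}\ne\varnothing$ and a constant $\vartheta\ge 0$ such that
$$
\varphi_{\Lambda Q_0}\,\mu_\infty \;=\; \vartheta\,\varphi_{\Lambda Q_0}\,\mathcal{H}^s|_L \qquad\text{as Borel measures on }\R^d.
$$

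For any cube $Q'\in\dy'$ satisfying the hypotheses of the lemma one has $\varphi_{\Lambda Q_0}\equiv 1$ on $\supp(\varphi_{Q'})$ (which holds as soon as $\Lambda\ge 2$), so the displayed identity yields $\I_{\mu_\infty}(Q')=\vartheta\int\varphi_{Q'}\,d\mathcal{H}^s|_L$. Specializing to $Q'=Q_0$ pins down $\vartheta=1/\!\int\varphi_{Q_0}\,d\mathcal{H}^s|_L$ and hence
$$
D_{\mu_\infty}(Q') \;=\; \frac{\int\varphi_{Q'}\,d\mathcal{H}^s|_L}{\ell(Q')^s\int\varphi_{Q_0}\,d\mathcal{H}^s|_L}.
$$
The stated containments force $\ell(Q')\in[2,\Lambda]$. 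An elementary computation, parametrizing $L$ by Lebesgue measure and splitting coordinates into components along and perpendicular to $L$, then shows that the right-hand side is bounded above and below by positive constants depending only on $d$, $s$ and $\Lambda$. The collection of admissible $Q'\in\dy'$ being finite, the first bullet of Lemma \ref{geomconv} transfers this two-sided bound from $\mu_\infty$ to $\mu_k$ for all sufficiently large $k$, as required.

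The main obstacle is the uniform lower bound on $\int\varphi_{Q_0}\,d\mathcal{H}^s|_L$, equivalently the upper bound on $\vartheta$: a priori the plane $L$ could graze $B_{Q_0}$ tangentially. This is resolved by combining the normalization $\I_{\mu_\infty}(Q_0)=1$ with the polynomial growth bound $D_{\mu_\infty}(\Lambda Q_0)\le C\Lambda^{s+2\eps}$ inherited from (\ref{easygrowth}) through the first bullet of Lemma \ref{geomconv}: this forces $\vartheta\le C\Lambda^{s+2\eps}$, so $\int\varphi_{Q_0}\,d\mathcal{H}^s|_L\ge c\Lambda^{-(s+2\eps)}>0$, yielding the claim with $C$ depending only on $d$, $s$ and the parameter $\Lambda$ (which is itself a fixed quantity throughout the proof of Proposition \ref{flatcoef}).
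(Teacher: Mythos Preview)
Your compactness argument is morally sound, but as written it does not prove the lemma: you end with a constant depending on $\Lambda$, whereas the lemma explicitly asks for $C=C(d,s)$ only, and this is not a cosmetic matter. The parameter $\Lambda$ is \emph{not} fixed in advance; it is chosen at the very end of the proof of Proposition~\ref{flatcoef}, large enough (in terms of $d,s$) to make the comparison between \eqref{epsgrowth} and Lemma~\ref{toosmall} contradictory. Concretely, those two lines combine to $\beta_{\mu_k}(Q_0)\le C^2 C_1 R^{\eps-1}\beta_{\mu_k}(Q_0)$, and one needs $R$ large to force $C^2C_1 R^{\eps-1}<1$; since $R\ll\Lambda$, a constant that grows like a power of $\Lambda$ (your bound gives $\vartheta\le C\Lambda^{s+2\eps}$, hence $D_{\mu_\infty}(Q')$ between $\Lambda^{-s}$ and $C\Lambda^{s+2\eps}$) would make this impossible.

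The paper avoids the blow-up entirely and works directly with the near-optimal plane $V_k$ for $\alpha_{\mu_k}(\Lambda Q_0)$. The crucial point you are missing is how to bound $\vartheta$ from above independently of $\Lambda$: once you know $L$ (or $V_k$) meets $B_{Q_0}$, test not at scale $\Lambda$ but at the \emph{fixed} scale $3Q_0$. Since $\{\varphi_{3Q_0}=1\}=B(0,6\sqrt d)$ strictly contains $B_{Q_0}=B(0,4\sqrt d)$ with a gap of $2\sqrt d$, any $s$-plane meeting $B_{Q_0}$ satisfies $\int\varphi_{3Q_0}\,d\mathcal H^s|_L\ge c(d,s)>0$; combined with $\I_{\mu_\infty}(3Q_0)\le C(d,s)$ from \eqref{easygrowth} at $N=3$, this gives $\vartheta\le C(d,s)$. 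With that bound in hand, both the upper and lower estimates on $D_{\mu_\infty}(Q')$ become $\Lambda$-free, and your limiting argument would go through. As it stands, though, the proposal leaves the essential quantitative step unresolved.
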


\begin{proof}The growth property (\ref{easygrowth}) ensures that $\mu_k(B_{\Lambda Q_0})\leq C(\Lambda)$.  Consequently, from the fact that $\alpha_{\mu_k}(\Lambda Q_0)\leq \tfrac{1}{k}$, we infer that for each $k$ there exists an $s$-plane $V_k$ that intersects $\tfrac{1}{4}B_{\Lambda Q_0}$ such that for every $f\in \Lip_0(B_{\Lambda Q_0})$  with $\|f\|_{\Lip}\leq 1$,
\begin{equation}\label{smallalpha}\Bigl|\int_{\R^d}\varphi_{\Lambda Q_0}fd[\mu_k-\vartheta_k\mathcal{H}^s|_{V_k}]\Bigl|\leq \frac{C(\Lambda)}{k},
\end{equation}
where $\vartheta_k = \frac{\I_{\mu_k}(\Lambda Q_0)}{\I_{\mathcal{H}^s|_{V_k}}(\Lambda Q_0)}$.

Since $D_{\mu_k}(Q_0)=1$, we readily see by testing (\ref{smallalpha}) with $f=\varphi_{Q_0}$ that $\vartheta_k\I_{\mathcal{H}^s|_{V_k}}(Q_0)\geq \frac{1}{2}$ if $k$ is large enough.  Thus the plane $V_k$ intersects $B_{Q_0}$.   Consequently, $\frac{1}{C}\ell(Q')^s\leq\I_{\mathcal{H}^s|_{V_k}}(Q')\leq C\ell(Q')^s$ whenever $\frac{\Lambda}{2}B_{Q_0}\supset \frac{1}{2}B_{Q'}\supset B_{Q_0}.$  But also $1\leq \I_{\mu_k}(3Q_0)\leq C$ from (\ref{easygrowth}).  Testing (\ref{smallalpha}) with $f=\varphi_{3Q_0}$ therefore yields that $\frac{1}{C}\leq \vartheta_k\leq C$ (for large $k$).     Finally, testing (\ref{smallalpha}) against $f=\varphi_{Q'}$, with $Q'$ as in the statement of the lemma, we infer that (\ref{densityconstant}) holds.\end{proof}

 Fix  $R$ to be an integer power of $2$ that satisfies $1\ll R\ll \Lambda$.  We choose a dyadic ancestor of $Q_0$ in $\dy'$, say $\wh{Q}_0$, of sidelength $16 R$.  Since the lattices $\dy^{(k)}$ stabilize, $\wh{Q}_0$ is a dyadic ancestor of $Q_0$ in the lattice $\dy^{(k)}$ for large enough $k$.  Insofar as $Q_0\in \dyupk(\mu_k)$, from (\ref{uprestate}) and (\ref{densityconstant}) we derive that
\begin{equation}\label{epsgrowth}\beta_{\mu_k}(\wh{Q}_0)\leq CR^{\eps}\beta_{\mu_k}(Q_0).
\end{equation}
Set $\beta_k = \beta_{\mu_k}(\wh Q_0)$.  Note that $\lim_{k\to \infty}\beta_k=0$.

\subsection{Concentration around the optimal least squares plane}

Denote by $L_k$ an optimal $s$-plane for $\beta_{\mu_k}(\wh Q_0)$.  Since $\I_{\mu_k}(Q_0)=1$, it is easily seen from Chebyshev's inequality that $L_k$ passes through $B_{Q_0} = B(0, 4\sqrt{d})$ for all sufficiently large $k$, so the closest point $x_k$ in $L_k$ to $0$ lies in $B_{Q_0}$.  Then clearly we have that
\begin{equation}\label{xktriv}B(x_k, \tfrac{r}{2})\subset B(0, r)\subset B(x_k, 2r) \text{ for every } r\geq 8\sqrt{d}.
\end{equation}

In this section our aim is to demonstrate the following lemma:

\begin{lem}\label{concentration}
There is a constant $C_1>0$, depending on $d$ and $s$, such that if $\wt\beta_k = C_1 \beta_k$, then
\begin{equation}\label{afterprune}\int_{B(0, 2R)\backslash L_{k,\wt\beta_k R}}\dist(x, L_k)^2d\mu_k(x)\leq \frac{C(R)\wt\beta_k^2}{ k},
\end{equation}
where $L_{k, \wt\beta_k R} = \{x\in \R^d: \, \dist(x, L_k)\leq \wt\beta_k R\}$.
\end{lem}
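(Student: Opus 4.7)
The plan is to apply the Pruning Alternative (Corollary \ref{prunalt}) to $\mu_k$ with the $s$-plane $L=L_k$ and radius $R$, and to rule out alternative (a) by using the smallness of the square function constituent $\mathcal{S}_{\mu_k}^k(Q_0)$ coming from (\ref{smallsquare2}).

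First I will verify the hypothesis of Corollary \ref{prunalt}. Since $Q_0\subset \wh Q_0$ and $\ell(\wh Q_0)=16R$, the centre $x_{\wh Q_0}$ lies within $8\sqrt{d}R$ of the origin, which gives $B(0,10R)\subset\{\varphi_{\wh Q_0}\equiv 1\}$. As $L_k$ is optimal for $\beta_{\mu_k}(\wh Q_0)$,
\[
\int_{B(0,10R)}\dist(x,L_k)^2\,d\mu_k(x)\leq (16R)^2\beta_k^2\,\I_{\mu_k}(\wh Q_0).
\]
The condition $R\ll\Lambda$ lets me invoke Lemma \ref{densityconstantlem} both for $\wh Q_0$, yielding $\I_{\mu_k}(\wh Q_0)\leq CR^s$, and for a dyadic ancestor $Q'$ of $Q_0$ with $\ell(Q')\asymp R$ and $B_{Q'}\subset B(0,R)$, yielding $\mu_k(B(0,R))\geq cR^s$ for all sufficiently large $k$. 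Hence the hypothesis of Corollary \ref{prunalt} holds with $\beta=C_2\beta_k$ for some $C_2=C_2(d,s)$.

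Next I will apply Corollary \ref{prunalt} with this $\beta$ and with $\Delta=C_0 R^2/k$, where $C_0=C_0(d,s)$ is to be fixed. For all $k$ large enough that $[3R,4R]\subset[1/k,k]$ and $B(0,2R)\subset kB_{Q_0}$, the double integral in alternative (a) is dominated by $\mathcal{S}_{\mu_k}^k(Q_0)$, which by (\ref{smallsquare2}) is at most $\beta_{\mu_k}(Q_0)^2/k$. Testing the functional defining $\beta_{\mu_k}(Q_0)$ against the plane $L_k$ and using $\varphi_{Q_0}\leq\varphi_{\wh Q_0}$, I will obtain $\beta_{\mu_k}(Q_0)^2\leq CR^{s+2}\beta_k^2$, so the left-hand side of (a) is at most $CR^{s+2}\beta_k^2/k$. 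On the other hand (a) would require it to be at least $cC_2^2 C_0 R^{s+2}\beta_k^2/k$. Fixing $C_0$ to be a sufficiently large constant (depending only on $d$ and $s$) produces a contradiction.

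Therefore alternative (b) must hold, giving
\[
\int_{B(0,2R)\setminus L_{k,3C_2\beta_k(d-s)R}}\Bigl(\frac{\dist(x,L_k)}{R}\Bigr)^{\!2} d\mu_k(x)\leq C\Delta\beta^2\mu_k(B(0,R))\leq \frac{C(R)\beta_k^2}{k}.
\]
Setting $C_1=3C_2(d-s)$ so that $3C_2\beta_k(d-s)R=\wt\beta_k R$, and multiplying through by $R^2$, yields (\ref{afterprune}). The main technical point will be to align the scales correctly: $\Lambda$ must dominate $R$ so that Lemma \ref{densityconstantlem} controls the density on the medium-sized cubes used above, $R$ must be fixed throughout the argument, and $k$ must eventually be taken large enough that the spatial and temporal integration regions appearing in the Pruning Alternative sit inside those of $\mathcal{S}_{\mu_k}^k(Q_0)$.
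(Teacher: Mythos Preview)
Your proof is correct and follows essentially the same route as the paper: verify the hypothesis of Corollary~\ref{prunalt} using Lemma~\ref{densityconstantlem} and the optimality of $L_k$, then use (\ref{smallsquare2}) together with the crude bound $\beta_{\mu_k}(Q_0)^2\leq CR^{s+2}\beta_k^2$ to rule out alternative~(a), forcing~(b). The only cosmetic difference is that the paper tunes the constant $C_1$ in $\wt\beta_k=C_1\beta_k$ to exclude alternative~(a), whereas you fix $C_1=3(d-s)C_2$ and instead tune the parameter $\Delta$ via $C_0$; the two choices are interchangeable.
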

This is a much stronger concentration property around the plane $L_k$ than the one that the $\beta$-number alone provides us with.  It will play a crucial role in the subsequent argument.

\begin{proof}[Proof of Lemma \ref{concentration}] We look to apply the pruning alternative.  
Observe that, provided $k$ is large enough
\begin{equation}\begin{split}\label{smallconstinproof} \int_{B(0, 2R)}\int_{3R}^{4R}\Bigl|\int_{\R^d}&\frac{x-y}{t^{s+1}}\varphi\Bigl(\frac{|x-y|}{t}\Bigl)d\mu_k(y)\Bigl|^2\frac{dt}{t}d\mu_k(x)\\&\leq \mathcal{S}_{\mu_k}^k(Q_0)\stackrel{(\ref{smallsquare2})}{\leq} \frac{1}{k}\beta_{\mu_k}(Q_0)^2 \stackrel{(\ref{densityconstant})}{\leq}\frac{C R^s}{k}\beta_k^2.\end{split}\end{equation}
 On the other hand, using (\ref{densityconstant}) once again we derive that $\I_{\mu_k}(\wh{Q}_0)\leq C\mu_k(B(0,R))$, while $\varphi_{\wh{Q}_0}\geq 1$ on $B(0, 10R)$, 
 so we certainly have that
\begin{equation}\label{prunhypsat}\frac{1}{\mu_k(B(0,R))}\int_{B(0, 10R)}\frac{\dist(x, L_k)^2}{R^2}d\mu_k(x)\leq C\beta_k^2.
\end{equation}
Consider the alternative in Corollary \ref{prunalt}, with $\Delta = \frac{CR^s}{k}$, and $\beta = \wt\beta_k = C_1\beta_k$.  If $C_1$ is chosen appropriately in terms of $d$ and $s$, the inequality (\ref{smallconstinproof}) forces us into the first case of Corollary \ref{prunalt}, which is to say that
$$\int_{B(0, 2R)\backslash L_{k,\wt\beta_k R}}\frac{\dist(x, L_k)^2}{R^2}d\mu_k(x)\leq \frac{CR^s\wt\beta_k^2}{ k}\mu(B(0,R)), $$
as required.\end{proof}

\subsection{Stretching the measure around the least squares plane}  Let $\mathcal{A}^{(k)}$ denote a rigid motion that maps the $s$-plane $\{0\}\times\R^s$ (with $0\in \R^{d-s}$) to $L_k$ and $0\in \R^d$ to $x_k$. We introduce the co-ordinates $x=(x',x'')$, $x'\in \R^{d-s}$, $x''\in \R^{s}$.  Then from (\ref{xktriv}) and (\ref{afterprune}) we have
\begin{equation}\label{notmuchstrip}\int_{B(0,R)\backslash( \{0\}\times\R^s)_{\wt\beta_k R}}\frac{|x'|^2}{\wt\beta_k^2}d(\mu_k\circ\mathcal{A}^{(k)})(x',x'')\leq \frac{C(R)}{k}.
\end{equation}
We define the squash mapping $\mathcal{S}_{\beta}(x) = (\beta x', x'')$ for $\beta>0$, along with the stretched measure  $$\nu_k(\,\cdot\,) = \mu_k(\mathcal{A}^{(k)}\circ \mathcal{S}_{\wt\beta_k}(\,\cdot\,)).$$

Since $\wt\beta_k<1$ for large enough $k$, we have $\nu_k(B(0,N))\leq (\mu_k\circ \mathcal{A}^{(k)})(B(0,N))$ for $N>0$.  As $\mu_k$ satisfies (\ref{easygrowth}), we see that we may pass to a subsequence such that $\nu_k$ converge weakly to a measure $\nu$.

For $m\in \mathbb{N}$, denote by $B^m(z,r)$ the $m$-dimensional ball centred at $z\in \R^m$ with radius $r>0$.  
Under our change of variables, the inequality (\ref{notmuchstrip}) becomes
\begin{equation}\label{nunotmuchstrip}\int_{[\mathcal{S}_{\wt\beta_k}^{-1}(B(0,R))]\backslash (\overline{B^{d-s}(0,R)}\times \R^s)}|x'|^2d\nu_k(x)\leq \frac{C(R)}{k}.
\end{equation}
Whence,
\begin{equation}\label{nothingoutstrip}\supp(\nu)\cap [\R^{d-s}\times B^s(0,R)]\subset \overline{B^{d-s}(0,R)}\times B^s(0,R).
\end{equation}
On the other hand, $\mu_k\circ\mathcal{A}^{(k)}(B(0, 8\sqrt{d}))\geq D_{\mu_k}(Q_0)=1$, and so  from (\ref{nunotmuchstrip}) we derive that $\nu_k(\overline{B^{d-s}(0,R)}\times B^s(0, 8\sqrt{d}))\geq 1-\frac{C(R)}{k}$.
Thus
$$\nu(\R^{d-s}\times \overline{B^s(0, 8\sqrt{d})}) = \nu(\overline{B^{d-s}(0,R)}\times \overline{B^s(0, 8\sqrt{d})})\geq 1.$$

\begin{lem}\label{nuweakconv}  The following three properties hold:

\begin{enumerate}
\item  If $f\in \Lip_0(B(0,R))$, then
$$\lim_{k\to\infty}\int_{\R^d}f(x',x'')d(\mu_k\circ \mathcal{A}^{(k)})(x',x'') = \int_{\R^d}f(0,x'')d\nu(x',x'').
$$
\item If $f\in \Lip_0(\R^{d-s}\times B^s(0,R))$, then
$$\lim_{k\to\infty}\int_{\R^d}f\bigl(\frac{x'}{\wt\beta_k},x''\bigl)d(\mu_k\circ \mathcal{A}^{(k)})(x',x'') = \int_{\R^d}f(x',x'')d\nu(x',x'').
$$
\item If $t\in (0, \tfrac{R}{8})$, then
\begin{equation}\begin{split}\nonumber\liminf_{k\to\infty}&\int_{B(0,R/2)}\Bigl|\int_{\R^d}\frac{x'-y'}{\wt\beta_k}\varphi\Bigl(\frac{|x-y|}{t}\Bigl)d(\mu_k\circ\mathcal{A}^{(k)})(y',y'')\Bigl|^2d(\mu_k\circ\mathcal{A}^{(k)})(x',x'')\\
&\geq  \int_{\R^{d-s}\times B^s(0,R/2)}\Bigl|\int_{\R^d}(x'-y')\varphi\Bigl(\frac{|x''-y''|}{t}\Bigl)d\nu(y',y'')\Bigl|^2d\nu(x',x'').
\end{split}\end{equation}
\end{enumerate}
\end{lem}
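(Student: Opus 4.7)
All three claims rest on the squash change of variables: for any bounded Borel $h$,
$$\int h(z)\,d\nu_k(z) = \int h\Bigl(\frac{x'}{\wt\beta_k},\, x''\Bigr)\,d(\mu_k\circ\mathcal{A}^{(k)})(x).$$
Part (2) is immediate from this identity: it rewrites the left-hand side as $\int f\,d\nu_k$, and since $f\in\Lip_0(\R^{d-s}\times B^s(0,R))$ has compact support in $\R^d$ (compact support in the open set $\R^{d-s}\times B^s(0,R)$ is the same as compact support in $\R^d$), the weak convergence $\nu_k\wlim\nu$ yields the limit.

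For part (1), I would decompose
$$\int f(x',x'')\,d(\mu_k\circ\mathcal{A}^{(k)}) = \int f(0,x'')\,d(\mu_k\circ\mathcal{A}^{(k)}) + \int [f(x',x'')-f(0,x'')]\,d(\mu_k\circ\mathcal{A}^{(k)})$$
and handle the two pieces separately. The Lipschitz property of $f$ bounds the second piece by $\|f\|_{\Lip}\int_{B(0,R)}|x'|\,d(\mu_k\circ\mathcal{A}^{(k)})$; splitting the integration over the thin strip $\{|x'|\leq\wt\beta_k R\}$ (dominated by $\wt\beta_k R\cdot (\mu_k\circ\mathcal{A}^{(k)})(B(0,R))=O(\wt\beta_k)$ via (\ref{easygrowth})) and its complement (bounded by $O(\wt\beta_k/\sqrt{k})$ via Cauchy--Schwarz and (\ref{notmuchstrip})) shows it tends to $0$. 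For the first piece, since $f(0,x'')$ is $x'$-independent, the change of variables gives $\int f(0,x'')\,d(\mu_k\circ\mathcal{A}^{(k)})=\int f(0,z'')\,d\nu_k$. Inserting a cutoff $\eta_M\in C_c^\infty(\R^{d-s})$ with $\eta_M=1$ on $\{|z'|\leq M\}$, the weak convergence $\nu_k\wlim\nu$ gives $\int f(0,z'')\eta_M(z')\,d\nu_k\to\int f(0,z'')\eta_M(z')\,d\nu$; the cutoff is removable on the $\nu$-side for $M\geq R$ by (\ref{nothingoutstrip}), and on the $\nu_k$-side the tail $\nu_k(\{|z'|>M,\,|z''|\leq R\})$ is $O(C(R)/(kM^2))$ by Chebyshev applied to the analog of (\ref{nunotmuchstrip}) on $B(0,2R)$ (proved exactly as (\ref{afterprune}) with $R$ replaced by $2R$, which is admissible since $R\ll\Lambda$).

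For part (3), the change of variables rewrites the left-hand side as
$$\int_{\mathcal{S}_{\wt\beta_k}^{-1}(B(0,R/2))}\Bigl|\int \psi_k(\tilde x,\tilde y)\,d\nu_k(\tilde y)\Bigr|^2 d\nu_k(\tilde x),$$
with kernel $\psi_k(\tilde x,\tilde y)=(\tilde x'-\tilde y')\varphi\bigl(\sqrt{\wt\beta_k^2|\tilde x'-\tilde y'|^2+|\tilde x''-\tilde y''|^2}/t\bigr)$. The elementary bound $|\sqrt{a^2+b^2}-b|\leq a$ combined with the Lipschitz property of $\varphi$ gives uniform convergence of $\psi_k$ to $\psi(\tilde x,\tilde y)=(\tilde x'-\tilde y')\varphi(|\tilde x''-\tilde y''|/t)$ on any bounded set. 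To apply Lemma \ref{stupidlemma}, which requires a compactly-supported inner kernel, I would multiply by a cutoff $\eta_M(\tilde y')$: on the bounded region $U=B^{d-s}(0,M')\times B^s(0,R/2-\epsilon)$, the truncated kernel has $\tilde y$-support contained in a fixed bounded set (uniformly in $k$), uniformly bounded Lipschitz constant, and uniform convergence on $U\times\R^d$, so Lemma \ref{stupidlemma} produces the liminf inequality for the truncated kernels. The tail error from removing $\eta_M$ on the $\nu_k$-side tends to $0$ uniformly in $\tilde x\in U$ by the same estimate as in part (1), and restricting the outer integral to $U$ only decreases it since $U\subset\mathcal{S}_{\wt\beta_k}^{-1}(B(0,R/2))$ for large $k$. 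Taking $M,M'\to\infty$ and $\epsilon\to 0$, and using dominated convergence on the $\nu$-side via (\ref{nothingoutstrip}), delivers the desired inequality.

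The main obstacle throughout is that the integrands --- especially the inner kernel in part (3) --- are not compactly supported in the $\tilde y'$-direction after the squash change of variables; indeed, the $\tilde y$-support of $\psi_k(\tilde x,\cdot)$ expands like $1/\wt\beta_k$, so neither weak convergence nor Lemma \ref{stupidlemma} can be applied directly. Overcoming this requires the quantitative $L^2$-concentration supplied by (\ref{notmuchstrip}) (and its analog on $B(0,2R)$), which turns weak convergence into tight concentration of $\nu_k$ near the plane $\{0\}\times\R^s$ on every bounded slice in $\tilde x''$, and thereby supplies the tail control on $\nu_k$ that makes all three parts go through.
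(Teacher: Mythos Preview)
Your strategy matches the paper's: change variables to $\nu_k$, insert cutoffs in the $x'$-direction to restore compact support, and invoke Lemma~\ref{stupidlemma} for item~(3). The one structural difference is that you use cutoffs $\eta_M$ with $M\to\infty$, whereas the paper exploits (\ref{nunotmuchstrip}) more sharply: since $\nu_k$ is already concentrated on $\overline{B^{d-s}(0,R)}\times\R^s$ (in the relevant $x''$-slice), a \emph{single fixed} cutoff $g\in\Lip_0(B^{d-s}(0,2R))$ with $g\equiv 1$ on $\overline{B^{d-s}(0,R)}$ suffices throughout, and no limit in $M$ is needed. This makes the paper's execution cleaner, but your version is not wrong.

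Two small slips to flag. In item~(1), the difference $f(x',x'')-f(0,x'')$ is supported in $\R^{d-s}\times B^s(0,R)$, not in $B(0,R)$; so your error bound must also account for the region $\{|x'|>R,\ |x''|<R\}$. This is handled by the very same tail estimate you give for the main term, so no new idea is required. In item~(3), after multiplying only by $\eta_M(\tilde y')$ the truncated kernel is still not uniformly Lipschitz in $k$ on $\R^d\times\R^d$: the $\partial_{\tilde x''}$-derivative picks up a factor $|\tilde x'-\tilde y'|$, which on the support of $\varphi$ can be as large as $2t/\wt\beta_k$, so Lemma~\ref{stupidlemma} does not literally apply. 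The paper avoids this by also inserting cutoffs $g(\tilde x')$ and $f(\tilde x'')f(\tilde y'')$, which confine $|\tilde x'-\tilde y'|\leq 4R$ on the support and give a genuinely uniform Lipschitz bound. Adding a cutoff $\eta_{M'}(\tilde x')$ (equal to $1$ on the $\tilde x'$-projection of your $U$) fixes this in your scheme at no cost.
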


The proof is a slightly cumbersome exercise in weak convergence, using the property (\ref{nunotmuchstrip}).  As such, we postpone the proof Section \ref{nuweaksec}.

\subsection{The limit measure $\nu$ is a cylindrically $\varphi$-symmetric measure.} For $r\in (0, \tfrac{R}{8})$, let us examine the inequality
$$\int_{B(0, R)}\Bigl|\int_{\R^d}(x-y)\varphi\Bigl(\frac{|x-y|}{r}\Bigl)d\mu_k(y)\Bigl|^2d\mu_k(x)\leq \frac{C(R)\wt\beta_k^2}{k}
$$
(see (\ref{smallconstinproof})).  We would like to see what happens to this inequality under the change of variables that takes $\mu_k$ to $\nu_k$.  First notice that, because of (\ref{xktriv}) (and the fact that a rigid motion is an isometry)
\begin{equation}\begin{split}\nonumber\int_{B(0,R/2)}\Bigl|\int_{\R^d} \frac{x'-y'}{\wt\beta_k}\varphi\Bigl(\frac{|x-y|}{r}\Bigl)&d(\mu_k\circ\mathcal{A}^{(k)})(y',y'')\Bigl|^2d(\mu_k\circ \mathcal{A}^{(k)})(x',x'')\\&\leq \frac{C(R)}{k}.
\end{split}\end{equation}
From this, we deduce  from   item (3) of Lemma \ref{nuweakconv} that
\begin{equation}\begin{split}\label{verticalvampire}\int_{\R^d}&(x'-y')\varphi\Bigl(\frac{|x''-y''|}{r}\Bigl)d\nu(y',y'')=0\\ &\text{ for every }(x',x'')\in \supp(\nu)\cap [\R^{d-s}\times B^s(0, R/2)].
\end{split}\end{equation}

We will establish the following lemma:

\begin{lem}\label{toosmall}  There exists a constant $C>0$ such that for sufficiently large $k$,
$$\beta_{\mu_k}(Q_0)\leq \frac{C}{R}\wt\beta_k.
$$
\end{lem}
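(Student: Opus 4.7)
The approach is to extract an affine $s$-plane $P=\{(\rho(z''),z''):z''\in\R^s\}$ from the cylinder blow-up limit $\nu$ and to use its pullback $\wt L_k:=\mathcal{A}^{(k)}(\mathcal{S}_{\wt\beta_k}(P))$ as the candidate plane witnessing the bound on $\beta_{\mu_k}(Q_0)$. The existence of such a $P$ does not follow from the cylindrical $\varphi$-symmetry (\ref{verticalvampire}) of $\nu$ alone; the key additional input is the hypothesis $\alpha_{\mu_k}(\Lambda Q_0)\leq 1/k$, which (in conjunction with the $\beta$-optimality of $L_k$) will force $\nu$ to be a constant multiple of $\mathcal{H}^s|_P$ for some affine $\rho\colon\R^s\to\R^{d-s}$.

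Granting this identification of $\nu$, the isometry of $\mathcal{A}^{(k)}$ and the pointwise bound $\dist(\mathcal{S}_{\wt\beta_k}(z),\mathcal{S}_{\wt\beta_k}(P))\leq\wt\beta_k|z'-\rho(z'')|$, combined with the change of variables $x=\mathcal{A}^{(k)}(\mathcal{S}_{\wt\beta_k}(z))$, give
\begin{equation*}
\int\dist(x,\wt L_k)^2\varphi_{Q_0}(x)\,d\mu_k(x)\leq\wt\beta_k^2\int|z'-\rho(z'')|^2 G_k(z)\,d\nu_k(z),
\end{equation*}
where $G_k(z):=\varphi_{Q_0}(\mathcal{A}^{(k)}(\mathcal{S}_{\wt\beta_k}(z)))$ is supported in $\{|z''|\leq 8\sqrt{d}\}\cap\{|\wt\beta_k z'|\leq 8\sqrt{d}\}$. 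I would split the right-hand integral with a smooth cutoff $\chi\in C_0(\R^{d-s})$ that equals $1$ on $B^{d-s}(0,R)$ and is supported in $B^{d-s}(0,2R)$. On the region $\chi(z')>0$ the integrand is compactly supported and bounded, $G_k$ converges locally uniformly to a function of $z''$ alone, and weak convergence $\nu_k\wlim\nu=c\mathcal{H}^s|_P$ together with the identity $z'=\rho(z'')$ on $P=\supp(\nu)$ makes this portion tend to $0$ as $k\to\infty$. On the complementary region $|z'|>R$, the support condition from $G_k$ confines $z$ to $\mathcal{S}_{\wt\beta_k}^{-1}(B(0,R))\setminus(\overline{B^{d-s}(0,R)}\times\R^s)$, so (\ref{nunotmuchstrip}) bounds this portion by $C(R)/k\to 0$. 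Thus for $k$ sufficiently large (depending on $R$) the right-hand integral is at most $1/R^2$, yielding $\int\dist(x,\wt L_k)^2\varphi_{Q_0}\,d\mu_k\leq\wt\beta_k^2/R^2$, and dividing by $\I_{\mu_k}(Q_0)=1$ gives $\beta_{\mu_k}(Q_0)\leq\wt\beta_k/R$.

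The principal obstacle is the identification of $\nu$ as $c\mathcal{H}^s|_P$ in the first paragraph: because the squash $\mathcal{S}_{\wt\beta_k}$ is not an isometry, the $\alpha$-approximation of $\mu_k$ by $\vartheta_k\mathcal{H}^s|_{V_k}$ does not transfer directly to the $\nu_k$-picture. My strategy is to parametrize $\mathcal{A}^{(k)-1}(V_k)=\{(a_k(x''),x''):x''\in\R^s\}$ for some affine $a_k$, derive the $L^2$-bound $\|a_k\|_{L^2(B^s(0,R))}^2\lesssim R^{s+2}\wt\beta_k^2+R\Lambda/k$ by applying the $\alpha$-inequality to the test function $\dist(\cdot,L_k)^2\varphi_{\widehat{Q}_0}$ normalized to be $(1/\Lambda)$-Lipschitz and combining with the $\beta$-optimality bound $\int\dist(x,L_k)^2\varphi_{\widehat{Q}_0}\,d\mu_k\lesssim R^{s+2}\wt\beta_k^2$, and then pass to a subsequence so that the dilated affine functions $\tilde a_k:=a_k/\wt\beta_k$ converge locally uniformly to an affine $\rho$. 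The stretched planes $\mathcal{S}_{\wt\beta_k}^{-1}(\mathcal{A}^{(k)-1}(V_k))=\{(\tilde a_k(x''),x''):x''\in\R^s\}$ then converge to $P$, and tracking the Jacobian of the squash in the pushforward of $\vartheta_k\mathcal{H}^s|_{V_k}$ identifies $\nu$ as $c\mathcal{H}^s|_P$ in the limit.
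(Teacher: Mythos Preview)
Your proposal has a genuine gap in the identification of the limit measure $\nu$: it is \emph{not} true in general that $\nu$ is a constant multiple of $\mathcal{H}^s|_P$ for an affine $P$, and the argument you sketch cannot establish it. The obstruction is exactly the one you flag: the squash $\mathcal{S}_{\wt\beta_k}$ is not an isometry, so the Wasserstein approximation from $\alpha_{\mu_k}(\Lambda Q_0)\leq 1/k$ does not transfer to the stretched picture. Your workaround requires both that $\tilde a_k=a_k/\wt\beta_k$ be bounded (your bound $\|a_k\|_{L^2}^2\lesssim R^{s+2}\wt\beta_k^2+C(R,\Lambda)/k$ only gives this if $k\wt\beta_k^2$ stays away from $0$, which is not known) and that the stretched pushforwards of $\vartheta_k\mathcal{H}^s|_{V_k}$ share the same weak limit as $\nu_k$ (their difference against Lipschitz test functions in the stretched coordinates is $O(1/(k\wt\beta_k))$, which may diverge). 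For a concrete obstruction when $s\geq 2$: if $\mu_k$ is surface measure on the graph of $\wt\beta_k u$ for a fixed nonlinear harmonic $u\colon B^s(0,R)\to\R^{d-s}$, with $\wt\beta_k$ decaying faster than any power of $1/k$, then $\alpha_{\mu_k}(\Lambda Q_0)\to 0$ with optimal plane essentially $L_k$ itself (so $a_k\approx 0$, $\rho=0$), yet the blow-up $\nu$ is supported on the graph of $u$, not on $\{x'=0\}$.

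The paper does not claim $\nu$ is flat. It uses $\alpha_{\mu_k}(\Lambda Q_0)\to 0$ only through test functions of $x''$ alone (which are insensitive to the squash) to show in Lemma~\ref{bigproj} that the projection of $\nu$ onto $\R^s$ is $\vartheta_0\mathcal{H}^s$; combined with the cylindrical $\varphi$-symmetry (\ref{verticalvampire}) this forces $\supp(\nu)\cap(\R^{d-s}\times B^s(0,R/2))$ to be the graph of a \emph{harmonic} function $u$, bounded by $R$. The factor $C/R$ in Lemma~\ref{toosmall} then comes from the second-order Taylor estimate for harmonic functions (Lemma~\ref{harmpoly}): on the unit-scale ball associated to $Q_0$, the function $u$ deviates from its tangent plane by at most $C/R$ in $L^2$, and this affine approximation, pulled back, furnishes the plane $\wt L_k$. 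Your second paragraph would be essentially correct if you replaced $\rho$ by this first-order Taylor polynomial of $u$ and tracked the resulting $O(1/R^2)$ error term rather than expecting it to vanish.
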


The estimate in this lemma is inconsistent with (\ref{epsgrowth}) if $R$ is large enough.  A contradictory choice of $R$ is possible once $\Lambda$ is chosen large enough in terms of $d$ and $s$.  As such, we will have completed the proof of Proposition \ref{flatcoef} once the lemma is established.

 The key to proving Lemma \ref{toosmall} will be to show that, when restricted to $\R^{d-s}\times B^s(0, R/2)$, \emph{the support of $\nu$ is the graph of an $\R^{d-s}$-valued harmonic function on $B^s(0,R/2)$}.  For this, we shall use the fact that $\alpha_{\mu_k}(\Lambda Q_0)$ tends to zero as $k\to\infty$ in a more substantial way than we have up to this point.

\subsection{Large projections of the limit measure} In this section we shall prove the following result.

\begin{lem}\label{bigproj} There exists $\vartheta_0>0$ such that for every $f:\R^s\to \R$, $f\in \Lip_0(B^s(0, \tfrac{3R}{4}))$, we have
$$\int_{\R^{d}}f(x'')d(\nu - \vartheta_0\mathcal{H}^s|_{\{0\}\times \R^s})(x',x'')=0
$$
\end{lem}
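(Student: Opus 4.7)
The plan is to transfer the question from $\nu$ back to the sequence $\mu_k\circ\mathcal{A}^{(k)}$ via Lemma \ref{nuweakconv}(1), then exploit the hypothesis $\alpha_{\mu_k}(\Lambda Q_0)\leq 1/k$ to replace $\mu_k$ by a multiple of the Hausdorff measure on the nearly optimal plane $V_k$ furnished by the $\alpha$-number. The main obstacle is to show that the pulled-back plane $(\mathcal{A}^{(k)})^{-1}(V_k)$ converges to $\{0\}\times\R^s$ in the limit; this forces us to exploit the coexistence of small $\alpha_{\mu_k}$ and small $\beta_{\mu_k}$ to argue that $V_k$ must lie close to $L_k$ on large balls.

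To begin, fix $f\in\Lip_0(B^s(0,3R/4))$, choose a cutoff $\chi\in\Lip_0(B^{d-s}(0,R/2))$ with $\chi(0)=1$ and $\|\chi\|_{\Lip}\leq C/R$, and form $F(x',x''):=f(x'')\chi(x')\in\Lip_0(B(0,R))$. Lemma \ref{nuweakconv}(1) then reads
\begin{equation*}
\int f(x'')\,d\nu(x',x'')=\int F(0,x'')\,d\nu=\lim_{k\to\infty}\int F\,d(\mu_k\circ\mathcal{A}^{(k)}).
\end{equation*}
The $\alpha$-condition yields, for large $k$, an $s$-plane $V_k$ meeting $\tfrac{1}{4}B_{\Lambda Q_0}$ and a density $\vartheta_k\in[1/C,C]$ (the bounds on $\vartheta_k$ coming from the proof of Lemma \ref{densityconstantlem}) such that
\begin{equation*}
\Bigl|\int\varphi_{\Lambda Q_0}\,g\,d[\mu_k-\vartheta_k\mathcal{H}^s|_{V_k}]\Bigr|\leq \|g\|_{\Lip}\cdot\frac{\Lambda}{k}\quad\text{for every }g\in\Lip_0(3B_{\Lambda Q_0}).
\end{equation*}
Applied to $g=F\circ(\mathcal{A}^{(k)})^{-1}$ (which has Lipschitz norm $O(1)$, is supported in $B(0,R+O(1))\subset 3B_{\Lambda Q_0}$, and satisfies $\varphi_{\Lambda Q_0}g=g$ once $\Lambda\gg R$), a change of variables yields
\begin{equation*}
\int F\,d(\mu_k\circ\mathcal{A}^{(k)})=\vartheta_k\int F\,d\mathcal{H}^s|_{V'_k}+O(\Lambda/k),\qquad V'_k:=(\mathcal{A}^{(k)})^{-1}(V_k).
\end{equation*}

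Since $V_k$ meets $B_{Q_0}$ for large $k$ by the proof of Lemma \ref{densityconstantlem}, the planes $V'_k$ meet a fixed bounded ball, so passing to a subsequence we have $\vartheta_k\to\vartheta_0\in[1/C,C]$ and $V'_k\to V'_\infty$ locally in the Hausdorff metric for some $s$-plane $V'_\infty$. The crucial identification is $V'_\infty=\{0\}\times\R^s$. For this I would invoke the $\alpha$-inequality once more, applied to $h_k(x):=\dist(x,L_k)^2\varphi_{\wh Q_0}(x)$: this test function has Lipschitz norm $O(R)$ and is supported in $3B_{\Lambda Q_0}$ for $\Lambda\gg R$, while the defining identity $\int h_k\,d\mu_k=\beta_k^2\,\I_{\mu_k}(\wh Q_0)\leq C\beta_k^2 R^{s+2}$ combined with the $\alpha$-comparison yields
\begin{equation*}
\int_{V_k}\dist(x,L_k)^2\varphi_{\wh Q_0}(x)\,d\mathcal{H}^s(x)\leq C\bigl(\beta_k^2 R^{s+2}+R\Lambda/k\bigr).
\end{equation*}
Because $\dist(\,\cdot\,,L_k)^2$ restricted to $V_k$ is a polynomial of degree at most two, equivalence of $L^2$- and $L^\infty$-norms on this finite-dimensional space promotes this average bound to $\max_{V_k\cap B(0,R)}\dist(\,\cdot\,,L_k)\leq C(R\beta_k+\sqrt{\Lambda R^{1-s}/k})$, which tends to $0$ as $k\to\infty$ with $R$ and $\Lambda$ fixed. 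Transporting by the isometry $(\mathcal{A}^{(k)})^{-1}$, the Hausdorff distance between $V'_k$ and $\{0\}\times\R^s$ on $B(0,R-O(1))$ tends to zero, forcing the two $s$-planes $V'_\infty$ and $\{0\}\times\R^s$ to coincide.

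Finally, since $F$ is continuous with compact support and $V'_k\to\{0\}\times\R^s$ in the Hausdorff sense on bounded balls, a standard isometric-parametrization argument gives $\int F\,d\mathcal{H}^s|_{V'_k}\to\int F(0,x'')\,dm_s(x'')=\int_{\R^s}f\,dm_s$. Combining with the earlier displays yields $\int f\,d\nu=\vartheta_0\int_{\R^s}f\,dm_s$, which is the desired conclusion. The delicate point throughout is that we have no quantitative control on the rate $\beta_k\to 0$, so one must fix $R$ and $\Lambda$ first and then take $k\to\infty$, letting the two error terms $R\beta_k$ and $\sqrt{\Lambda R^{1-s}/k}$ vanish independently.
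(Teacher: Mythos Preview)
Your proof is correct and follows essentially the same approach as the paper: both test the $\alpha$-inequality (\ref{smallalpha}) against a function of the form $\dist(\cdot,L_k)^2$ times a cutoff to force $V_k$ close to $L_k$, then pass to the limit using item (1) of Lemma \ref{nuweakconv}. Your explicit use of norm equivalence on the finite-dimensional space of quadratic polynomials to upgrade the integral bound to a pointwise Hausdorff-distance bound is a nice way to justify a step the paper leaves implicit, and your two-step limit (first $V'_k\to\{0\}\times\R^s$, then integrate) is a harmless reordering of the paper's direct comparison of $\mathcal{H}^s|_{V_k}$ with $\mathcal{H}^s|_{\{0\}\times\R^s}$.
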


\begin{proof} Recall (see the proof of Lemma \ref{densityconstantlem}) that for every $k$ there is an $s$-plane $V_k$ for which (\ref{smallalpha}) holds for every $f\in \Lip_0(B_{\Lambda Q_0})$ with $\|f\|_{\Lip}\leq 1$, and $\frac{1}{C}\leq\vartheta_k\leq C$.  Also recall that $L_k$ is an optimal $s$-plane for $\beta_{\mu_k}(\wh Q_0)$.  Both $V_k$ and $L_k$ pass through $B_{Q_0}$ if $k$ is sufficiently large.

Consider a cut-off function $h\in \Lip_0(B(0, R))$, with $h\equiv 1$ on $B(0,3R/4)$ and $\|h\|_{\Lip}\leq 1$.  Then the function $x\mapsto h(x)(\dist(x,L_k))^2$ is $C(R)$-Lipschitz, and so, by (\ref{smallalpha}) and the definition of the $\beta$-coefficient, we infer that
$$\int_{B(0, 3R/4)}\dist(x,L_k)^2d\mathcal{H}^k|_{V_k}(x)\leq \frac{C(R)}{k}+C(R)\wt\beta_k^2.$$
Given that the planes $L_k$ and $V_k$ both pass through $B_{Q_0}$, this implies that the intersection of the plane $[\mathcal{A}^{(k)}]^{-1}(V_k)$ with the ball $B(0, \tfrac{3R}{4})$ lies within a $C(R)\omega_k$ neighbourhood of $[\{0\}\times \R^s] \cap B(0, \tfrac{3R}{4})$, where $\omega_k \to 0$ as $k\to\infty$. Consequently, if $F\in \Lip_0(B(0, \tfrac{3R}{4}))$, $\|F\|_{\Lip}\leq 1$, then
\begin{equation}\label{alphacloseplane}\Bigl|\int_{\R^d} F(x',x'')d(\mu_k\circ\mathcal{A}^{(k)} - \vartheta_k\mathcal{H}^s|_{\{0\}\times \R^s})(x',x'')\Bigl|\leq C(R)\omega_k.
\end{equation}

Passing to a subsequence so that $\vartheta_k$ converges to $\vartheta_0$, we get from item (1) of Lemma \ref{nuweakconv} that
$$\int_{\R^d} F(0,x'')d(\nu - \vartheta_0\mathcal{H}^s|_{\{0\}\times \R^s})(x',x'')=0.
$$
The lemma follows immediately from this statement.\end{proof}

As a consequence of the lemma, note that whenever $x''\in B^s(0, \tfrac{R}{2})$ and $t<R/8$, we have
\begin{equation}\begin{split}\nonumber \int_{\R^{d-s}\times B^s(x'',t)}\varphi\Bigl(\frac{|x''-y''|}{t}\Bigl)d\nu(y',y'') &= \vartheta_0\I_{\mathcal{H}^s}(B^s(x'',t))\\&=\vartheta_0\I_{\mathcal{H}^s}(B^s(0,t)).
\end{split}\end{equation}

\subsection{The final contradiction: The proof of Lemma \ref{toosmall}}

From the observations of the previous section along with the property (\ref{verticalvampire}), we find if $(x',x'')\in \supp(\nu)\cap [\R^{d-s}\times B^s(0,\tfrac{R}{2})]$ and $r\in (0, R/8)$, then
$$x' = \frac{1}{\vartheta_0\I_{\mathcal{H}^s}(B^s(0,r))}\int_{\R^{d}}y'\varphi\Bigl(\frac{|x''-y''|}{r}\Bigl)d\nu(y',y'').
$$
This formula determines $x'$ in terms of $x''$.  From this, we derive that $\supp(\nu)\cap (\R^{d-s}\times B^s(0, \tfrac{R}{2}))$ is a graph given by $\{(u(x''),x''): x''\in B^s(0, \tfrac{R}{2})\}$ for some $u: B^s(0, \tfrac{R}{2})\to \overline{B^{d-s}(0,R)}$.  As, for each Borel set $E\subset B^s(0, \tfrac{R}{2})$,
\begin{equation}\label{cylindermeasure}\nu(\R^{d-s}\times E) = \nu(\overline{B^{d-s}(0,R)}\times E) = \vartheta_0\mathcal{H}^s(E),
\end{equation}
we have that whenever $B^s(x'', 2r)\subset B^s(0, R/2)$,
$$u(x') =  \frac{1}{\I_{\mathcal{H}^s}(B(x'',r))}\int_{\R^s}u(y'')\varphi\Bigl(\frac{|x''-y''|}{r}\Bigl)d\mathcal{H}^s(y'').
$$
This certainly ensures that $u$ is a smooth function, but moreover it is harmonic.  Indeed, for each $x''\in B(0, R/2)$ we have that for small enough $r$,
\begin{equation}\begin{split}\label{polar}0 &= \int_{\R^s}\varphi\Bigl(\frac{|x''-y''|}{r}\Bigl)[u(y'')-u(x'')]d\mathcal{H}^s(y'') \\ &=c\int_0^{2r}\int_{\mathbb{S}^{s-1}}[u(x''+t\omega) -u(x'')]d\sigma(\omega)\varphi\Bigl(\frac{t}{r}\Bigl)t^s\frac{dt}{t},
\end{split}\end{equation}
where $d\sigma$ denotes the surface area measure on the unit $s$-sphere $\mathbb{S}^{s-1}$.  With $\Delta_s$ denoting the Laplacian in $\R^s$, we infer from Taylor's formula (or the divergence theorem) that
$$\int_{\mathbb{S}^{s-1}}[u(x''+t\omega) -u(x'')]d\sigma(\omega) = ct^2\Delta_s u(x'') + O(t^3) \text{ as }t\to 0
$$
for some constant $c>0$.  
Plugging the preceding identity into (\ref{polar}) yields that $r^{s+2}|\Delta u(x'')|\leq Cr^{s+3}$ for all small $r$.  Hence $\Delta u(x'')=0$ for  $x''\in B(0, \tfrac{R}{2})$.


Since $|u(x'')|\leq R$ for every $x''\in B^s(0, \frac{R}{2})$ (see (\ref{nothingoutstrip})), standard gradient estimates yield that $|\nabla u(x'')|\leq C$ if $x''\in B^s(0, \tfrac{R}{4})$.  In order to prove Lemma \ref{toosmall}, we shall employ the following simple estimate for harmonic functions.  We introduce the notation $\dashint_E fd\mathcal{H}^s := \frac{1}{\mathcal{H}^s(E)}\int_E fd\mathcal{H}^s$.

\begin{lem}\label{harmpoly}  If $B^s(x'',r)\subset B(0, \tfrac{R}{16})$, then
\begin{equation}\begin{split}\nonumber\dashint_{B^s(x'',r)} |u(y'') - & u(x'') - D u(x'')(y''-x'')|^2d\mathcal{H}^s(y'') \\&\leq C\Bigl(\frac{r}{R}\Bigl)^4\dashint_{B^s(0, \tfrac{R}{2})}|u|^2d\mathcal{H}^s.
\end{split}\end{equation}
\end{lem}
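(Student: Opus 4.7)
\medskip

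\noindent\textbf{Proof plan for Lemma \ref{harmpoly}.}  The plan is to reduce the statement to the standard interior derivative estimates for harmonic functions.  Recall from the discussion preceding the lemma that $u$ is harmonic on $B^s(0,R/2)$.  The key pointwise input will be Taylor's theorem with remainder: for every $y''\in B^s(x'',r)$ there exists $\xi$ on the segment between $x''$ and $y''$ such that
\[
u(y'')-u(x'')-Du(x'')(y''-x'')=\tfrac{1}{2}D^2u(\xi)(y''-x'',y''-x''),
\]
so $\bigl|u(y'')-u(x'')-Du(x'')(y''-x'')\bigr|\le \tfrac{r^2}{2}\sup_{B^s(x'',r)}\|D^2u\|.$

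Next I will bound $\|D^2u\|$ on $B^s(x'',r)$ by an $L^2$-average of $u$.  Since $B^s(x'',r)\subset B^s(0,R/16)$, every $\xi\in B^s(x'',r)$ satisfies $|\xi|\le R/8$, and hence $B^s(\xi,R/4)\subset B^s(0,3R/8)\subset B^s(0,R/2)$.  Applying the standard interior estimate for harmonic functions (obtained by writing $D^2u$ via the mean value property twice, or by Cauchy's estimates for the Poisson kernel) gives
\[
\|D^2u(\xi)\|\le \frac{C}{R^2}\sup_{B^s(\xi,R/4)}|u|\le \frac{C}{R^2}\biggl(\dashint_{B^s(\xi,R/2)}|u|^2\,d\mathcal{H}^s\biggr)^{1/2}\le \frac{C}{R^2}\biggl(\dashint_{B^s(0,R/2)}|u|^2\,d\mathcal{H}^s\biggr)^{1/2},
\]
where the middle inequality is the usual $L^\infty$--$L^2$ bound for harmonic functions (mean value inequality applied to $|u|^2$, which is subharmonic).

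Combining the two previous displays gives the pointwise estimate
\[
\bigl|u(y'')-u(x'')-Du(x'')(y''-x'')\bigr|^2\le C\Bigl(\tfrac{r}{R}\Bigr)^4 \dashint_{B^s(0,R/2)}|u|^2\,d\mathcal{H}^s
\]
for every $y''\in B^s(x'',r)$, and averaging this over $B^s(x'',r)$ yields the conclusion.  There is no serious obstacle: the only thing to verify carefully is the inclusion $B^s(\xi,R/2)\subset B^s(0,R/2)$ that legitimises the interior estimates, which is precisely why the hypothesis restricts $x''$ to the smaller ball $B^s(0,R/16)$.
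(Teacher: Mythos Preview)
Your approach is the same as the paper's: Taylor's remainder followed by interior derivative estimates for harmonic functions, then passage from the $L^\infty$ bound to an $L^2$ average (the paper does this last step via the mean value property for $u$ and Cauchy--Schwarz, you via the subharmonicity of $|u|^2$; the difference is cosmetic).

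There is one slip. You single out the inclusion $B^s(\xi,R/2)\subset B^s(0,R/2)$ as ``the only thing to verify carefully'', but that inclusion is \emph{false}: $|\xi|$ can be as large as $R/8$, so $B^s(\xi,R/2)$ reaches to distance $5R/8$ from the origin, and in particular $u$ need not even be defined on all of $B^s(\xi,R/2)$. The fix is immediate: for any $z\in B^s(\xi,R/4)$ one has $|z|\le 3R/8$, hence $B^s(z,R/8)\subset B^s(0,R/2)$, and the subharmonicity of $|u|^2$ on that smaller ball already yields
\[
|u(z)|^2\le \dashint_{B^s(z,R/8)}|u|^2\,d\mathcal{H}^s\le C\,\dashint_{B^s(0,R/2)}|u|^2\,d\mathcal{H}^s.
\]
With this adjustment your chain of inequalities is correct.
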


\begin{proof} Note that if $y''\in B^s(x'',r)$, then Taylor's theorem ensures that for some $z''\in B^s(x'',r)$,
$$|u(y'') -  u(x'') - D u(x'')(y''-x'')|\leq Cr^2|D^2u(z'')|.
$$
But now since $u$ is harmonic, from standard gradient estimates and the mean value property we obtain that
$$|D^2u(z'')|\leq \frac{C}{R^2}\sup_{B(z'', \tfrac{R}{4})}|u|\leq \frac{C}{R^2}\dashint_{B^s(x'', R/2)}|u|d\mathcal{H}^s.
$$
Squaring both sides of the resulting inequality, taking the integral average over $B(x'', r)$, and using the Cauchy-Schwartz inequality, we arrive at the desired statement.\end{proof}

Written in terms of $\nu$, the previous estimate, along with the property (\ref{cylindermeasure}), ensure that there exist a $(d-s)\times s$ matrix $A$ and a vector $b\in \R^s$ such that
\begin{equation}\begin{split}\label{betterplaneapprox} \dashint_{\R^{d-s}\times B^s(0,300\sqrt{d}\ell(Q_0))}&|x'-Ax''-b|^2d\nu(x',x'') \\&\leq \Bigl(\frac{C}{R}\Bigl)^2\dashint_{\R^{d-s}\times B^s(0, \tfrac{R}{2})}\Bigl(\frac{|x'|}{R}\Bigl)^2d\nu(x',x'').
\end{split}\end{equation}
Furthermore we have $A=\nabla u(0)$, and $b = u(0)$, and so  $|b|\leq R$ and $|A|\leq C$.

Consider the function $f:\R^{s}\to\R$ given by $f(x'')=\varphi\bigl(\frac{|x''|}{100\sqrt{d}}\bigl)$ and fix a non-negative function $g\in \Lip_0(B^{d-s}(0, 2R)$ with $g\equiv 1$ on $\overline{B^{d-s}(0,R)}$.  Then from statement (2) of Lemma \ref{nuweakconv} we get that
\begin{equation}\begin{split}\nonumber&\int\limits_{\R^d}g(x')f(x'')|x'-Ax''-b|^2d\nu(x',x'')\\
& = \lim_{k\to\infty}\frac{1}{\wt\beta_k^2}\int\limits_{\R^d}g(\tfrac{x'}{\wt\beta_k})f(x'')|x'-\wt\beta_k A x''-\wt\beta_k b|^2d(\mu_k\circ \mathcal{A}^{(k)})(x',x'')\\
&\geq \limsup_{k\rightarrow \infty}\frac{1}{\wt\beta_k^2}\int_{\{|x'|\leq \wt\beta_k R\}}\varphi_{25 Q_0}(x)|x'-\wt\beta_k A x''-\wt\beta_k b|^2d(\mu_k\circ \mathcal{A}^{(k)})(x',x'').
\end{split}\end{equation}
(In the final line we have used the trivial  observation that $f(x'')\geq \varphi_{25Q_0}(x)$ for $x=(x',x'')\in \R^d$.)  On the other hand, using (\ref{notmuchstrip}) and (\ref{densityconstant}), statement (2) of Lemma \ref{nuweakconv} ensures that
\begin{equation}\begin{split}\nonumber&\dashint_{\R^{d-s}\times B^s(0, \tfrac{R}{2})}\Bigl(\frac{|x'|}{R}\Bigl)^2d\nu(x',x'')\\
&\leq \liminf_{k\rightarrow \infty}\frac{C}{\I_{\mu_k}(\wh{Q}_0)}\int_{B(0, \frac{R}{2})}\Bigl(\frac{|x'|}{R\wt\beta_k}\Bigl)^2d(\mu_k\circ\mathcal{A}^{(k)})(x',x'')\\
&\leq \liminf_{k\rightarrow \infty}\frac{C\beta_k^2}{\wt\beta_k^2}\leq C.
\end{split}\end{equation}
Comparing the previous two observations with (\ref{betterplaneapprox}), and using our bounds for $A$ and $b$, we find for all sufficiently large $k$ some $s$-plane $\wt{L}_k$ with $B(0,\tfrac{R}{2})\cap \wt L_k\subset \{\dist(x,L_k)\leq C\wt\beta_k R\}$, such that
$$\frac{1}{\wt\beta_k^2}\int_{\{\dist(x,L_k)\leq \wt\beta_kR\}}\varphi_{25Q_0}(\mathcal{A}^{(k)}x)\dist(x, \wt L_k)^2d\mu_k(x)\leq \frac{C}{R^2}.
$$
On the other hand, if $x\in B(0, \tfrac{R}{2})$ satisfies $\dist(x,L_k)> \wt\beta_k R$, then certainly  $\dist(x, \wt L_k)\leq C\dist(x, L_k)$.  Whence, from (\ref{afterprune}), we infer that all for large enough $k$,
\begin{equation}\begin{split}\nonumber&\int_{\{\dist(x,L_k)> \wt\beta_k R\}}\varphi_{25Q_0}(\mathcal{A}^{(k)}x)\dist(x, \wt L_k)^2d\mu_k(x)\\&\leq C\int_{B(0, \tfrac{R}{2})\cap\{\dist(x,L_k)> \wt\beta_k R\}}\dist(x, L_k)^2d\mu_k(x)\leq \frac{C(R)}{k}\wt\beta_k^2\leq \frac{1}{R^2}\wt\beta_k^2.
\end{split}\end{equation}
Notice that (\ref{xktriv}) ensures that $\varphi_{25Q_0}(\mathcal{A}^{(k)}\,\cdot\,)\geq \varphi_{Q_0}$.  Consequently, by combining our observations,  we see that for sufficiently large $k$,
\begin{equation}\label{betatoosmall2}\beta_{\mu_k}(Q_0)\leq \frac{C}{R}\wt\beta_{k},
\end{equation}
and so Lemma \ref{toosmall} is proved.\\

\subsection{The proof of Lemma \ref{nuweakconv}}\label{nuweaksec} We now turn to proving Lemma \ref{nuweakconv}.

\begin{proof}[Proof of Lemma \ref{nuweakconv}] Note the identity
$$\int_{\R^d}f(x',x'')d(\mu_k\circ\mathcal{A}^{(k)})(x',x'') = \int_{\R^d}f(\wt\beta_k x',x'')d\nu_k(x',x'').
$$
By replacing $f$ in this identity with $(x',x'')\mapsto f(\tfrac{x'}{\wt\beta_k}, x'')$, we see that item (2) of the Lemma follows directly from the weak convergence of $\nu_k$ to $\nu$.   Fix $g\in \Lip_0(B^{d-s}(0, 2R))$  satisfying $g\equiv 1$ on $\overline{B^{d-s}(0,R)}$.  Because of (\ref{nunotmuchstrip}), if $f\in \Lip_0(B(0,R))$, $\|f\|_{\Lip}\leq 1$, then
$$\Bigl|\int_{\R^d}f(\wt\beta_k x',x'')d\nu_k(x',x'') -  \int_{\R^d}g(x',x'')f(\wt\beta_k x',x'')d\nu_k(x',x'')\Bigl|\leq \frac{C(R)}{k}.$$
But the function $(x',x'')\mapsto g(x',x'')f(\wt\beta_k x',x'')$ converges to the function $(x',x'')\mapsto g(x',x'')f(0,x'')$ uniformly on $\overline{B^{d-s}(0,2R)}\times B^s(0,R)$, and
$\int_{\R^d}f(0,x'')d\nu(x',x'') = \int_{\R^d}g(x',x'')f(0,x'')d\nu(x',x'')$. Item (1) is follows immediately from these two observations.

To prove item (3), we shall look to apply Lemma \ref{stupidlemma}.  For $t\in (0, \tfrac{R}{8})$, consider the integral $I_k$ given by
$$\int\limits_{\mathcal{S}_{\wt\beta_k}^{-1}(B(0,\tfrac{R}{2}))}\!\!\!\!\Bigl|\int_{\R^d}(x'-y')\varphi\Bigl(\frac{|(\wt\beta_k [x'-y'],[x''-y''])|}{t}\Bigl)d\nu_k(y',y'')\Bigl|^2d\nu_k(x',x'').
$$
Notice that if we choose $f\in \Lip_0(B^s(0,R))$ with $f \equiv 1$ on $B^s(0, \tfrac{3R}{4})$, then inserting a factor of $f(y'')f(x'')$ in the inner integral does not affect the value of the double integral.  Consider the measure $d\wt\nu_k(x',x'') = f(x'')d\nu_k(x',x'')$.  The error introduced by replacing $I_k$ with the integral $\wt{I}_k$, defined by
\begin{equation}\begin{split}\label{horribleexpression}\int_{\mathcal{S}_{\wt\beta_k}^{-1}(B(0,\tfrac{R}{2}))}\Bigl|\int_{\R^d}\Bigl[(x'-y')&\varphi\Bigl(\frac{|(\wt\beta_k [x'-y'],[x''-y''])|}{t}\Bigl)\\&\cdot g(x')g(y')\Bigl]d\wt\nu_k(y',y'')\Bigl|^2d\wt\nu_k(x',x''),
\end{split}\end{equation}
is bounded by a constant multiple of
\begin{equation}\begin{split}\nonumber&\int_{[\mathcal{S}_{\wt\beta_k}^{-1}(B(0,\tfrac{R}{2}))]\backslash [\overline{B^{d-s}(0,R)}\times B^s(0,\tfrac{R}{2})]}|x'|^2d\nu_k(x',x'')\nu_k(\mathcal{S}_{\wt\beta_k}^{-1}(B(0,R)))^2\\
&+ \nu_k(\mathcal{S}_{\wt\beta_k}^{-1}(B(0,\tfrac{R}{2})))\Bigl(\int_{[\mathcal{S}_{\wt\beta_k}^{-1}(B(0,R))]\backslash [\overline{B^{d-s}(0,R)}\times B^s(0,R)]}|y'|d\nu_k(y',y'')\Bigl)^2.
\end{split}\end{equation}
From (\ref{nunotmuchstrip}) we therefore infer that $\lim_{k\to\infty}|I_k - \wt I_k|\leq \lim_{k\to\infty}\frac{C(R)}{k}=0$. (Note that, from (\ref{xktriv}), $\nu_k(\mathcal{S}_{\wt\beta_k}^{-1}(B(0,\tfrac{R}{2})))\leq \mu_k(B(0, R))\leq CR^s.$)

Observe that the function $$\psi_k(x,y) = (x'-y')\varphi\Bigl(\frac{|\wt\beta_k (x'-y'),x''-y''|}{t}\Bigl)g(x')g(y')$$ converges uniformly as $k\to\infty$ to $$\psi(x,y) = (x'-y')\varphi\Bigl(\frac{|x''-y''|}{t}\Bigl)g(x')g(y'),$$ and for each $x\in \R^d$, $\supp(\psi_k(x,\cdot))\subset B(x, 2\sqrt{d}R)$.  Clearly $\sup_k\|\psi_k\|_{\Lip}<\infty$, as the $\wt\beta_k$ factor can only decrease the Lipschitz norm of $\varphi$.  Appealing to Lemma \ref{stupidlemma} with the sequence of measures $\wt\nu_k$, which converge weakly to the measure  $d\wt\nu(x',x'') = f(x'')d\nu(x',x'')$, and $U = B^{d-s}(0,2R)\times B^s(0, \tfrac{R}{2})$, we infer that $\liminf_k I_k$ is at least
$$\int\limits_{B^{d-s}(0,2R)\times B^s(0, \frac{R}{2})}\Bigl|\int_{\R^d}(x'-y')\varphi\Bigl(\frac{|x''-y''|}{t}\Bigl)g(x')g(y')d\wt\nu(y',y'')\Bigl|^2d\wt\nu(x',x''),
$$
and, after recalling the basic properties of $g$ and $f$, this proves (3).
\end{proof}



 \end{document}